\documentclass[a4paper]{article}
\usepackage{fullpage}
\usepackage{graphicx} 
\usepackage{epsfig}
\usepackage{tikz}
\usepackage{tikz-cd}
\usepackage{graphicx}
\usepackage{amsmath,amssymb,amsfonts}
\usepackage{amsthm}
\usepackage{mathrsfs}
\usepackage{bm}
\usepackage{stmaryrd}
\usepackage{amsrefs}
\usepackage{enumitem} 


\numberwithin{equation}{section}

\theoremstyle{plain}
\newtheorem{theorem}{Theorem}[section]
\newtheorem{proposition}[theorem]{Proposition}

\newtheorem{lemma}[theorem]{Lemma}

\theoremstyle{definition}
\newtheorem{definition}[theorem]{Definition}
\newtheorem{example}[theorem]{Example}

\theoremstyle{remark}
\newtheorem{remark}[theorem]{Remark}

\usepackage{hyperref}
\title{Uniform Diophantine approximation on the Hecke group $\mathbf H_4$}

\author{Ayreena Bakhtawar
\and
Dong Han Kim
\and
Seul Bee Lee
}
\newcommand{\Addresses}{{
  \bigskip
  \footnotesize

  A.~Bakhtawar, \textsc{Centro di Ricerca Matematica Ennio De Giorgi, Scuola Normale Superiore, Piazza dei Cavalieri 3,
56126 Pisa, Italy and \\
Institute of Mathematics, Polish Academy of Sciences, ul.  Sniadeckich 8, 00-656
Warszawa, Poland}\par\nopagebreak
  \textit{E-mail address}: \texttt{ayreena.bakhtawar@sns.it,abakhtawar@impan.pl}

  \medskip

  D.H.~Kim, \textsc{Department of Mathematics Education, Dongguk University - Seoul, 30 Pildong-ro 1-gil, Jung-gu, Seoul, 04620 Korea}\par\nopagebreak
  \textit{E-mail address}: \texttt{kim2010@dgu.ac.kr}

  \medskip

  S.B.~Lee, \textsc{Department of Mathematical Sciences, Seoul National University, 1, Gwanak-ro, Gwanak-gu, Seoul 08826, Korea}\par\nopagebreak
  \textit{E-mail address}: \texttt{seulbee.lee@snu.ac.kr}

}
}

\date{\today}

\begin{document}

\maketitle

\begin{abstract}
Dirichlet's uniform approximation theorem is a fundamental result in Diophantine approximation that gives an optimal rate of approximation with a given bound.  
We study uniform Diophantine approximation properties on the Hecke group $\mathbf H_4$.
For a given real number $\alpha$, we characterize  
the sequence of $\mathbf H_4$-best  
approximations of $\alpha$ and show that they are convergents of the Rosen continued fraction and the dual Rosen continued fraction of $\alpha$.
We give analogous theorems of Dirichlet uniform approximation and the Legendre theorem with optimal constants.
\end{abstract}

\section{Introduction}

The celebrated Dirichlet's theorem states that for any real number $\alpha$ and for every $N>1$,
there exist 
$p,q\in\mathbb Z$ such that   
\begin{equation}\label{DT}
| q\alpha  - p | < \frac{1}{N}, \qquad 1 \le q \le N.
\end{equation}
Dirichlet’s theorem is a uniform Diophantine approximation result as it guarantees a non-trivial integer solution for all large enough $N$.
An irrational number $\alpha$ is said to be \emph{Dirichlet's improvable} if there exists a constant $C<1$ such that the inequalities   
\begin{equation}\label{DTS}
|q \alpha  - p | < \frac{C}{N}, \qquad 1 \le q \le N,
\end{equation}
have  
non trivial integer solutions for all sufficiently large enough $N$. 
It has been observed by Davenport and Schmidt \cite{DS70} that the set of Dirichlet's improvable numbers has Lebesgue measure zero. 
Further they showed that an irrational number $\alpha$ is Dirichlet's improvable if and only if it is badly approximable, i.e., it has bounded partial quotients in the continued fraction expansion of $\alpha.$
To be more precise, let $\alpha  = [ a_0; a_1, a_2, \dots]$ represents the regular continued fraction expansion of $\alpha$ and define $C(\alpha)$ as the infimum of $C$ satisfying \eqref{DTS}.
Then, it is known in \cite{DS70}*{Theorem 1} that
$$
C(\alpha) = \limsup_{n \to \infty} \, q_{n} | q_{n-1} \alpha - p_{n-1} |
= \limsup_{n \to \infty} \frac{1}{1+[ 0 ; a_n, a_{n-1}, \dots, a_1] \cdot [ 0; a_{n+1}, a_{n+2}, \dots]},
$$
where $p_n/q_n = [a_0; a_1, \dots, a_n]$ is the $n$-th convergent of the regular continued fraction expansion. 
We remark that $C(\alpha) < 1$ if and only if $\alpha$ is badly approximable.

Motivated by the work of Davenport and Schmidt \cite{DS70} for the regular continued fractions, 
the aim of this article is to study the uniform Diophantine approximation on the Hecke group $\mathbf H_4$
and extends the results of \cite{DS70} for $\mathbf H_4.$ 
It is worth mentioning here that, 
in the settings of asymptotic Diophantine approximation (i.e., when one considers the following statement: for any irrational number, the inequality $|\alpha x-y|<c/x$  has infinitely many solutions in $x,y\in\mathbb Z$ and $x\neq 0$ for some constant $c$), there have been developments for the Hecke groups, see for example \cites{KS, Le85, RS92, Vul97}, but nothing much is known for the uniform approximation.
The results of this paper thus will contribute to enhancing the complete description of $\mathbf H_4$-expansion in the settings of uniform Diophantine approximation.

\begin{figure}
\centering
\begin{tikzpicture}[scale=3]
    \node[below] at (0, 0) {$0$};
    \draw[thick] (-1.5, 0) -- (1.5, 0);
    \draw[thick,red] ({-1/sqrt(2)},1.6) -- ({-1/sqrt(2)},{1/sqrt(2)});
    \draw[dashed,red] ({-1/sqrt(2)},{1/sqrt(2)}) -- ({-1/sqrt(2)},0) node [below] {$-\frac1{\sqrt 2}$};
    \draw[thick,red] ({1/sqrt(2)},1.6) -- ({1/sqrt(2)},{1/sqrt(2)});
    \draw[dashed,red] ({1/sqrt(2)},{1/sqrt(2)}) -- ({1/sqrt(2)},0) node [below] {$\frac1{\sqrt 2}$};
    \draw[thick,blue] ({1/sqrt(2)},{1/sqrt(2)}) arc (45:135:1);
    \draw[dashed,blue] (1,0) node [below] {$1$} arc (0:45:1);
    \draw[dashed,blue] (-1,0) node [below] {$-1$} arc (180:135:1);
\end{tikzpicture}
\caption{Fundamental domain of the Hecke group $\mathbf H_4$}
\label{fig1}
\end{figure}
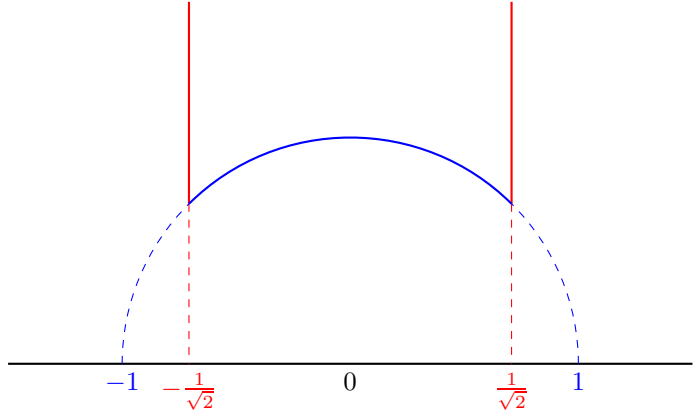

The Hecke group $\mathbf H_4$ is the subgroup of $\mathrm{PSL}_2(\mathbb R)$ generated by 
$\begin{pmatrix} 1 & \sqrt 2 \\ 0 & 1 \end{pmatrix}$ and   
$\begin{pmatrix} 0 & -1 \\ 1 & 0 \end{pmatrix}$.
See Figure~\ref{fig1} for a fundamental domain of $\mathbf H_4$.
It is well-known (see e.g. \cite{Pa77} and the references therein) that    
\begin{multline*}
\mathbf H_4 = \left\{ \begin{pmatrix} a & \sqrt 2 b \\ \sqrt 2 c & d\end{pmatrix} \, \big| \ ad-2bc = 1, \quad a, d \in 2\mathbb Z +1 , \quad b,c \in \mathbb Z \right\} \\
\cup \left\{ \begin{pmatrix} \sqrt 2 a & b \\ c & \sqrt 2 d\end{pmatrix} \, \big| \ 2ad-bc = 1, \quad b,c \in 2\mathbb Z +1 , \quad a,d \in \mathbb Z \right\}.
\end{multline*}   
The Hecke group $\mathbf H_4$ acts as a linear fractional map on the upper half plane $\mathbb H = \{ x + yi \in \mathbb C \, | \, y > 0 \}$ and its boundary $\partial \mathbb H = \hat{\mathbb R} := \mathbb R \cup \{ \infty \}$.
We have approximations of a real number by the image of $\infty$ under the action of $\mathbf H_4$.  
Let 
$$
\mathbb Q (\mathbf H_4) := 
\left\{ \frac pq \in \hat{\mathbb R} \ \Big | \, \begin{pmatrix} p & r \\ q & s \end{pmatrix} \in \mathbf H_4 \right \}
= \sqrt 2 \mathbb Q.   
$$
We will write $p/q\in\mathbb Q (\mathbf H_4)$ if there exists $\begin{pmatrix}p& r\\q&s\end{pmatrix}\in\mathbf H_4$ for some $r$, $s$ with $q>0$, i.e.,
$$
p = a , \ q = \sqrt 2 c \quad \text{ or } \quad  p = \sqrt 2 a, \ q =c 
$$
for some coprime integers $a$ and $c > 0$.
We obtain the following main results on the uniform Diophantine approximation for the Hecke group $\mathbf H_4$.

\begin{theorem}\label{thm:DS}
Let $\alpha$ be a real number.
For every $N$, there exists $p/q \in \mathbb Q (\mathbf H_4)$ such that
\begin{equation}\label{thmua}
\left| q \alpha - p \right| < \frac{\sqrt 2+1}{2} \cdot \frac{1}{N} \quad \text{ with } \quad 1 \le q \le N.
\end{equation}
\end{theorem}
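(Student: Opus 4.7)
My plan has two stages: first a reduction to a lattice problem, then a refinement using Rosen continued fractions to recover the sharp constant $(\sqrt 2+1)/2$.

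For the reduction, the explicit description of $\mathbf H_4$ given above shows that $p/q \in \mathbb Q(\mathbf H_4)$ is either of the shape $a/(\sqrt 2 c)$ with $a$ odd and $c \in \mathbb Z_{>0}$, or of the shape $\sqrt 2 a/c$ with $c$ odd and positive. The inequality \eqref{thmua} therefore becomes, in the first case, $|\sqrt 2 c\alpha - a| < (\sqrt 2+1)/(2N)$ with $1 \le \sqrt 2 c \le N$, and, in the second case, $|c\alpha - \sqrt 2 a| < (\sqrt 2+1)/(2N)$ with $1 \le c \le N$. A naive pigeonhole/Minkowski argument applied to either of the rank-$2$ lattices $\mathbb Z(1,\alpha)+\mathbb Z(0,\sqrt 2)$ or $\mathbb Z(\sqrt 2,\sqrt 2\alpha)+\mathbb Z(0,1)$ already produces a solution of size $\sqrt 2/N$, which establishes existence but is weaker than what the theorem asserts.

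To sharpen the constant I would invoke the Rosen continued fraction expansion $\alpha = [a_0;\varepsilon_1 a_1,\varepsilon_2 a_2,\dots]$ of $\alpha$ with respect to $\lambda = \sqrt 2$, whose convergents $p_n/q_n$ belong to $\mathbb Q(\mathbf H_4)$ and satisfy a three-term recursion together with the standard identity expressing $q_n\alpha - p_n$ as $\pm 1$ divided by a linear combination of $q_n$, $q_{n-1}$ and the iterated Rosen Gauss shift $T^{n+1}\alpha$. Given $N$, pick the index $n$ with $q_n \le N < q_{n+1}$. The strategy is to show, by a case analysis on the sign $\varepsilon_{n+1}$ and the size of the partial quotient $a_{n+1}$, that one of (i)~the convergent $p_n/q_n$ itself, (ii)~its successor $p_{n+1}/q_{n+1}$ when it still fits in the size window by way of $q_{n-1}$, or (iii)~a dual Rosen / intermediate mediant of the form $(p_n + k\,p_{n-1})/(q_n + k\,q_{n-1})$, satisfies both the size constraint and the required bound $(\sqrt 2+1)/(2N)$.

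The main obstacle I foresee is pinning down the extremal case and showing that the constant $(\sqrt 2+1)/2$ is never breached. I expect it to be attained precisely when $T^{n+1}\alpha$ is close to a fixed point of the relevant Rosen Möbius iteration, i.e.\ when the tail of the expansion resembles the $\mathbf H_4$-analogue of the golden ratio; the delicate step should be to verify that in exactly this scenario the dual Rosen / mediant option in (iii)~still produces an admissible denominator $q \le N$ and an approximation within the target tolerance.
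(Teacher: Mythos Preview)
Your proposal is an outline, not a proof, and the decisive step---obtaining the sharp constant $(\sqrt 2+1)/2$---is precisely the part you leave open. There is a real gap.

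First, option (ii) is incoherent as stated: if you choose $n$ with $q_n \le N < q_{n+1}$, then $p_{n+1}/q_{n+1}$ has denominator exceeding $N$ and can never be used. Second, and more importantly, you give no argument that the list (i)--(iii) is exhaustive, nor any mechanism for bounding the approximation quality of the mediants in (iii). The Rosen convergents alone do \emph{not} suffice: the paper shows (Theorem~\ref{thm:BA}) that the full sequence of $\mathbf H_4$-best approximations is the union of the Rosen convergents \emph{and} the dual Rosen convergents, and it is only for this enlarged sequence that the key inequality $q_{i+1}|q_i\alpha - p_i| < (\sqrt 2+1)/2$ holds uniformly. Without first identifying that sequence, your case analysis has no backbone.

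The paper's route is different from yours: it sets aside the Rosen expansion as the primary tool and instead uses the $\mathbf H_4$-expansion $\alpha = [d_1,d_2,\dots]$ arising from the cutting sequence of the geodesic to $\alpha$ across the $\mathbf H_4$-tessellation. Writing $G_n = A_{d_1}\cdots A_{d_n} = \left(\begin{smallmatrix} t_n & v_n \\ u_n & w_n \end{smallmatrix}\right)$, $\alpha_n = [d_{n+1},d_{n+2},\dots]$ and $\alpha_n^* = w_n/u_n = [d_n,\dots,d_1,3^\infty]$, one proves (Proposition~\ref{prop0}) that every best approximation is one of the column quotients $t_n/u_n$ or $v_n/w_n$, with an explicit criterion in terms of whether $\alpha_n$ and $\alpha_n^*$ lie above or below $1$. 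A short case analysis (Lemma~\ref{llll}) then pins down the successor $p_{i+1}/q_{i+1}$ of each best approximation, and for each case one gets a closed formula such as $q_{i+1}|q_i\alpha - p_i| = (\sqrt 2 + \alpha_n^*)/(\alpha_n + \alpha_n^*)$ with $\alpha_n^* > 1$, $\alpha_n \in [1,\sqrt 2)$ (Proposition~\ref{Ubound}); the bound $(\sqrt 2+1)/2$ is then read off directly from these ranges. Given $N$, one picks $i$ with $q_{i-1}\le N < q_i$ and concludes. Your instinct about the extremal case is correct: it is $\alpha = 1 = [2,2,2,\dots]$, but the point is that the formula above makes this transparent rather than something to be verified after the fact.
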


We consider improvement of the Dirichlet type theorem for the Hecke group $\mathbf H_4$.
Define $K(\alpha)$ as the infimum of $K$
such that the following inequality 
\begin{equation}\label{DTSRC}
\left| q \alpha  - p \right| < \frac{K}{N}, \qquad 1 \le q \le N,
\end{equation}
has non trivial solutions $\frac pq \in \mathbb Q (\mathbf H_4)$ for all sufficiently large enough $N$. 
Indeed, we showed that 
$$K(\alpha) \le K(1) = \frac{\sqrt 2 + 1}{2}$$ 
for any $\alpha \in \mathbb R \setminus \mathbb Q(\mathbf H_4)$ 
(see Example~\ref{example:sup} below). 
Therefore the constant $\frac{\sqrt 2 + 1}{2}$ in Theorem~\ref{thm:DS} cannot be replaced by any smaller value. 
We remark that $1$ is badly approximable but not Dirichlet's improvable in $\mathbf H_4$.   
Indeed, $1$ is the number corresponding to the minimum of the Lagrange spectrum and   
$$\liminf_{p/q \in \mathbb Q (\mathbf H_4)} \, q \left| q \cdot 1  - p \right| = \frac 12.$$
See \cites{CK23, KS} for details.

The uniform approximation constant $K(\alpha)$ is obtained by the sequence of best approximations.
Let us first recall the definition of best approximation.
\begin{definition}
Let $p/q \in \mathbb Q (\mathbf H_4).$ Then $p/q$ is an $\mathbf H_4$-best approximation of a real number $\alpha$ if for any $r/s \in \mathbb Q (\mathbf H_4)$ with $r/s \ne p/q$ and $0 < s \le q$,
$$
|s\alpha - r| > | q\alpha - p|.
$$
\end{definition}
Let
$\Big \{ \frac{p_i}{q_i} \Big\}_{i=1}^\infty $
be the sequence of the $\mathbf H_4$-best approximations with $q_i < q_{i+1}$.
Similarly to the classical modular group case, we have
$$
K(\alpha) = \limsup_{i \to \infty} q_i | q_{i-1} \alpha - p_{i-1}|,
$$
(see Lemma~\ref{lem:K} below).
We give a bound of the $\mathbf H_4$-best approximation and state Legendre's theorem associated with $\mathbf H_4$-best approximations.
\begin{theorem}\label{thm2}
(i) If $p/q$ is an $\mathbf H_4$-best approximation of $\alpha$, then 
we have 
$$
\left| \alpha - \frac{p}{q} \right| < \frac{1}{q^2}.
$$
(ii) If 
\begin{equation*}
\left|\alpha-\frac{p}{q}\right|<\frac{1}{2q^2},
\end{equation*}
then $p/q$ is an $\mathbf H_4$-best approximation of $\alpha$.
Moreover, the constant 1/2 is the optimal value.
\end{theorem}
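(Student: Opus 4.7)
\emph{Part (i).} I would invoke the characterization (established earlier in the paper) of $\mathbf H_4$-best approximations as convergents of the Rosen continued fraction of $\alpha$. The Rosen analog of the classical bound $|q_i\alpha - p_i| < 1/q_{i+1}$, combined with strict monotonicity of denominators, yields $|\alpha - p_i/q_i| < 1/(q_i q_{i+1}) < 1/q_i^2$. A self-contained alternative avoids the Rosen machinery: pick a mixed-type Farey neighbor $r/s \in \mathbb Q(\mathbf H_4)$ of $p/q$ on the opposite side of $\alpha$, with $s \le q$ and $|p/q - r/s| = 1/(qs)$. Writing $\delta = |\alpha - p/q|$, one has $|s\alpha - r| = 1/q - s\delta$, and the best-approximation hypothesis $q\delta < |s\alpha - r|$ rearranges to $\delta < 1/(q(q+s)) \le 1/q^2$.

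\emph{Part (ii), sufficiency.} The proof follows Legendre's template. First, for distinct $p/q, r/s \in \mathbb Q(\mathbf H_4)$, I would establish the minimal-separation bound
\[
\left| \frac{p}{q} - \frac{r}{s} \right| \ge \frac{1}{qs},
\]
by a direct case analysis using the decomposition of $\mathbb Q(\mathbf H_4)$ into type-1 elements $a/(\sqrt 2 c)$ and type-2 elements $\sqrt 2 a/c$ from the preamble. Same-type pairs in fact satisfy the sharper bound $\sqrt 2/(qs)$; the extremal $1/(qs)$ is attained only for mixed pairs, where the Bézout quantity $|ac' - 2a'c|$ is a nonzero odd integer. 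Given this, assume $|\alpha - p/q| < 1/(2q^2)$ and pick any $r/s \ne p/q$ with $1 \le s \le q$; triangle inequality yields
\[
\left| \alpha - \frac{r}{s} \right| > \frac{1}{qs} - \frac{1}{2q^2} = \frac{2q - s}{2q^2 s} \ge \frac{1}{2qs},
\]
so $|s\alpha - r| > 1/(2q) > |q\alpha - p|$, proving $p/q$ is $\mathbf H_4$-best.

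\emph{Part (ii), optimality of $1/2$.} To see $1/2$ cannot be improved, I would construct mixed-type pairs saturating the separation bound with $s_n/q_n \to 1^-$. The odd-indexed convergents $1/1,\, 7/5,\, 41/29,\dots$ of $\sqrt 2 = [1;\overline{2}]$ furnish pairs $(\alpha_n, \beta_n)$ with $\alpha_n^2 - 2\beta_n^2 = -1$ (so $\alpha_n < \sqrt 2 \beta_n$). Setting $p_n/q_n = A_n/(\sqrt 2 \beta_n)$ (type 1) and $r_n/s_n = \sqrt 2 B_n/\alpha_n$ (type 2), with $(A_n, B_n)$ a Bézout solution of $A\alpha_n - 2B\beta_n = \pm 1$, one checks that Pell's relation forces the parity/coprimality conditions so that the pair lies in $\mathbb Q(\mathbf H_4)$ and saturates $|p_n/q_n - r_n/s_n| = 1/(q_n s_n)$, while $s_n/q_n = \alpha_n/(\sqrt 2 \beta_n) \to 1^-$ and hence $q_n/(q_n + s_n) \to 1/2^+$. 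For any $c > 1/2$ and $n$ large enough that $q_n/(q_n + s_n) < c$, taking $\alpha$ between $p_n/q_n$ and $r_n/s_n$ with $|\alpha - p_n/q_n| = 1/(q_n(q_n + s_n))$ gives $|\alpha - p_n/q_n| < c/q_n^2$ but forces $|s_n\alpha - r_n| = |q_n\alpha - p_n|$, so $p_n/q_n$ fails to be $\mathbf H_4$-best.

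\paragraph{Main obstacle.} The technically most delicate piece is the optimality argument: one must verify that the Bézout solution $(A_n, B_n)$ can indeed be chosen so that $A_n$ is odd, $\gcd(A_n, \beta_n) = 1$, and $\gcd(B_n, \alpha_n) = 1$, all of which should fall out automatically from Pell's relation combined with the defining equation $A_n \alpha_n - 2 B_n \beta_n = \pm 1$, though the bookkeeping deserves care. For Part~(i), should one avoid the Rosen CF characterization, the delicate point is ensuring that a Farey neighbor of the prescribed mixed type exists with $s \le q$ and lies on the correct (opposite) side of $\alpha$; with the Rosen characterization in hand this is automatic.
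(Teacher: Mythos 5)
Your Part (ii) is in good shape. The sufficiency argument is essentially the paper's own proof: the same separation bound $|ps-rq|\ge 1$ (equivalently $|p/q-r/s|\ge 1/(qs)$, coming from the fact that $ps-rq$ lies in $\mathbb Z\setminus\{0\}$ or $\sqrt2\,\mathbb Z\setminus\{0\}$) followed by the same triangle-inequality computation. Your optimality construction is genuinely different from the paper's: the paper exhibits a single $\alpha$ through its $\mathbf H_4$-expansion ($d_n=3$ exactly when $n=3^i$) for which $v_n/w_n$ is never a best approximation while $w_n|w_n\alpha-v_n|=\bigl((\alpha_n^*)^{-1}+\alpha_n^{-1}\bigr)^{-1}\to \tfrac12$ from above along $n_i=2\cdot 3^i$, whereas you build, for each $c>\tfrac12$, a Pell-based mixed-type pair saturating $|p_n/q_n-r_n/s_n|=1/(q_ns_n)$ with $s_n/q_n\to1^-$ and place $\alpha$ at the balance point. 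This works: from $A_n\alpha_n-2B_n\beta_n=\pm1$ the oddness of $A_n$ and the coprimality conditions are automatic, $s_n=\alpha_n<\sqrt2\beta_n=q_n$ by the Pell relation, and the resulting equality $|s_n\alpha-r_n|=|q_n\alpha-p_n|$ already violates the strict inequality in the paper's definition of best approximation (or perturb $\alpha$ slightly toward $r_n/s_n$). The paper's route buys a single explicit $\alpha$ and ties the optimality to the same quantity $1/(\alpha_n+\alpha_n^*)$ used throughout; yours is more elementary but needs a fresh $\alpha$ for each $c$.

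Part (i), however, has a genuine gap in both of your proposed routes. The Rosen route misquotes the characterization: Theorem~\ref{thm:BA} says a best approximation is a convergent of the Rosen \emph{or} the dual Rosen continued fraction, and the dual-Rosen-only convergents are exactly the ones where the constant degrades from $1/2$ to $1$ (Theorem~\ref{thm:AAR}), so a bound for Rosen convergents alone cannot give (i). Moreover, the claimed analogue of $|q_i\alpha-p_i|<1/q_{i+1}$ is false if $q_{i+1}$ denotes the next best-approximation denominator: by Proposition~\ref{Ubound} and Example~\ref{example:sup}, $q_{i+1}|q_i\alpha-p_i|$ can exceed $1$ and its supremum is $\tfrac{\sqrt2+1}{2}$. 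The "self-contained" route fails at the existence step you flagged: the tangent (Ford-circle) neighbor $r/s$ with $s\le q$ on the opposite side of $\alpha$ need not exist. Tangencies in $\mathbf H_4$ occur only between the two types, and, for instance, $p/q=1/(2\sqrt2)$ has as its only tangent neighbor with denominator $\le 2\sqrt2$ the point $0/1$, which lies to its left; taking $\alpha$ slightly to the right of $1/(2\sqrt2)$ (e.g.\ $\alpha=0.36$, for which $1/(2\sqrt2)$ \emph{is} a best approximation), no admissible neighbor exists on the far side of $\alpha$. The same failure occurs for every $p/q\in\sqrt2\,\mathbb Z$, all of whose tangent neighbors have denominator $\sqrt2>1$. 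Producing a valid competitor on the correct side with controlled denominator is precisely the structural content the paper supplies via the $\mathbf H_4$-expansion (Lemmas~\ref{lem1}, \ref{lem3}, Proposition~\ref{prop0}), which yields $q|q\alpha-p|=1/(\alpha_n+\alpha_n^*)<1$, resp.\ $1/\bigl(\alpha_n^{-1}+(\alpha_n^*)^{-1}\bigr)<1$, under the condition ``$\alpha_n>1$ or $\alpha_n^*>1$'' (resp.\ ``$<1$''); as written, your proposal does not prove (i).
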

Note that for the case $p/q$ 
is a convergent of the Rosen continued fraction,  
Theorem~\ref{thm2} (i) was shown in \cite{Na95}*{Proposition 12}.

Our next result concerns about the approximation properties of the convergents of an irrational number $\alpha$ associated with the Rosen continued fraction expansion.
The \emph{Rosen continued fraction} of a real number $\alpha$ associated with the Hecke group $\mathbf H_4$ can be defined as follows:
\begin{equation*}
\alpha = a_0 \sqrt 2 + \cfrac{\epsilon_1}{ a_{1} \sqrt 2 + \cfrac{\epsilon_2}{ a_{2} \sqrt 2+ \ddots }}
= \left \llbracket a_0 ; \epsilon_1/a_1,  \epsilon_2/a_2, \dots \right \rrbracket,
\end{equation*}
here, we put $k_0 \in \mathbb Z$ and $k_i \in \mathbb N$, $\epsilon_i \in \{ -1, +1 \}$ for $i \ge 1$. 
Let 
$$ 
\frac{r_i}{s_i} = \llbracket a_0; \epsilon_1 / a_1, \dots, \epsilon_i/ a_i \rrbracket =
a_0 \sqrt 2 + \cfrac{\epsilon_1}{ a_{1} \sqrt 2 + \cfrac{\epsilon_2}{ \ddots+ \cfrac{\epsilon_{i}}{a_i \sqrt 2} }}
$$
be the $i$-th convergent of the Rosen continued fraction.
In addition, for the uniqueness of the expansion, we assume that if $a_i = 1$  for $i \ge 1$, then $\epsilon_{i+1} = +1$.
One can change the constraint for the uniqueness of the Rosen continued fraction expansion. 
The \emph{dual Rosen continued fraction} expansion is defined by
$$\alpha 
= \big \llbracket \tilde{a}_0 ; \tilde{\epsilon}_1/\tilde{a}_1 , \tilde{\epsilon}_2/\tilde{a}_2, \dots \big \rrbracket,
$$
where $\tilde a_0 \in \mathbb Z$ and $\tilde a_i \in \mathbb N$, $\tilde\epsilon_i \in \{ -1, +1 \}$ for $i \ge 1$ with the rule that if $\tilde{a}_i = 1$, then $\tilde{\epsilon}_{i} = +1$ for $i \ge 1$.

We can check easily that the first best approximation 
$\frac{p_1}{q_1} = a_0\sqrt 2 = \frac{r_0}{s_0}$
is the 0-th convergent of the Rosen continued fraction.
The convergents of the Rosen and the dual Rosen  
continued fraction expansion are $\mathbf H_4$-best approximations of an irrational number in the following sense:
\begin{theorem}\label{thm:BA}
Let $\alpha$ be a real number not in $\mathbb Q (\mathbf H_4).$ 
Let $\frac{r_i}{s_i} = \llbracket a_0; \epsilon_1 / a_1, \dots, \epsilon_i/ a_i \rrbracket$ 
be the $i$-th convergent of the Rosen continued fraction and 
$\frac{\tilde r_i}{\tilde s_i} = \llbracket \tilde a_0; \tilde \epsilon_1 / \tilde a_1, \dots, \tilde \epsilon_i/ \tilde a_i \rrbracket$
be the $i$-th convergent of the dual Rosen continued fraction.
We have that $p/q \in \mathbb Q (\mathbf H_4)$ is an $\mathbf H_4$-best approximation of $\alpha$ if and only if 
$$ \frac{p}{q} = \frac{r_i}{s_i} 
\quad \text{ for } \ i \ge 0 \quad \text{ or } \quad  \frac pq = \frac{\tilde r_i}{\tilde s_i} 
\quad \text{ for } \ i \ge 1.$$
\end{theorem}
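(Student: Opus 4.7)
The plan is to deduce both directions of the equivalence from the matrix representation of Rosen and dual Rosen convergents as elements of $\mathbf H_4$, combined with the sign pattern of the approximation errors $s_i\alpha - r_i$.

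For the setup, I would first recall the Rosen recurrences $r_i = a_i\sqrt{2}\, r_{i-1} + \epsilon_i r_{i-2}$ and $s_i = a_i\sqrt{2}\, s_{i-1} + \epsilon_i s_{i-2}$, together with the determinant identity $r_{i-1}s_i - r_i s_{i-1} = (-1)^i \epsilon_1\cdots\epsilon_i$, from which the matrix $M_i = \begin{pmatrix} r_i & r_{i-1}\\ s_i & s_{i-1}\end{pmatrix}$ lies in $\mathbf H_4$ up to a sign, with its block type in the presentation at the beginning of the paper determined by the parities of $r_i, s_i$. The identity $|s_i\alpha - r_i| = 1/|s_i T^{i+1}\alpha + \epsilon_{i+1} s_{i-1}|$, where $T$ denotes the Rosen shift, gives strict monotonicity of $|s_i\alpha - r_i|$ and fixes the sign of $s_i\alpha - r_i$ relative to $s_{i-1}\alpha - r_{i-1}$. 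Entirely analogous statements hold for the dual Rosen convergents.

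For the ``convergents are best approximations'' direction, given $p/q\in\mathbb Q(\mathbf H_4)$ with $0 < q \le s_i$ and $|q\alpha - p| \le |s_i\alpha - r_i|$, I invert $M_i$ (using $M_i^{-1}\in\mathbf H_4$) to write $\begin{pmatrix} p\\ q\end{pmatrix} = A \begin{pmatrix} r_i\\ s_i\end{pmatrix} + B \begin{pmatrix} r_{i-1}\\ s_{i-1}\end{pmatrix}$ with $A,B\in\mathbb Z[\sqrt 2]$ of controlled parity dictated by which block of $\mathbf H_4$ contains $M_i$. The identity $q\alpha - p = A(s_i\alpha - r_i) + B(s_{i-1}\alpha - r_{i-1})$, combined with the constraint $q\le s_i$ and the known sign behaviour of the two errors, forces $B = 0$ and then $A = 1$, yielding $p/q = r_i/s_i$. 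The identical argument applied with the dual matrix shows each $\tilde r_i/\tilde s_i$ is also a best approximation.

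For the completeness direction, I would enumerate all $p/q\in\mathbb Q(\mathbf H_4)$ whose denominator lies strictly between $s_i$ and $s_{i+1}$ by exploiting the Hecke-Farey mediant structure: up to parity they have the form $\frac{j\sqrt 2\, r_i + \epsilon_{i+1} r_{i-1}}{j\sqrt 2\, s_i + \epsilon_{i+1} s_{i-1}}$ for $1\le j \le a_{i+1}$. Testing each such candidate against $|s_i\alpha - r_i|$ with the explicit error formula pins down which of them are best approximations: the full mediant $j = a_{i+1}$ produces $r_{i+1}/s_{i+1}$, while in the edge cases triggered by the Rosen uniqueness rule ``$a_i = 1 \Rightarrow \epsilon_{i+1} = +1$'' an extra partial mediant survives as a best approximation. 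Rewriting the tail of the expansion using the Hecke generators $z\mapsto z+\sqrt 2$ and $z\mapsto -1/z$ identifies this extra mediant with a genuine $\tilde r_k/\tilde s_k$, and conversely every dual Rosen convergent is shown to arise this way.

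The main obstacle I expect is the detailed bookkeeping at these edge cases: one must verify that the mediants forbidden by the Rosen rule coincide exactly with dual Rosen convergents produced by the dual rule ``$\tilde a_i = 1 \Rightarrow \tilde\epsilon_i = +1$'', and that the two sequences together exhaust the best approximations without redundancy, with the common starting term $a_0\sqrt 2$ accounting for the asymmetric indexing $i\ge 0$ for Rosen and $i\ge 1$ for dual Rosen in the statement.
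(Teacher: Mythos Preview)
Your approach is genuinely different from the paper's. The paper does not work directly with the Rosen recurrences or mediants at all; instead it introduces a third combinatorial object, the $\mathbf H_4$-expansion $\alpha = [d_1,d_2,\dots]$ coming from the geodesic cutting sequence, and proves the theorem in two independent halves. First (Proposition~\ref{prop0}) it shows that the $\mathbf H_4$-best approximations are exactly those $t_n/u_n$ with $\alpha_n>1$ or $\alpha_n^*>1$, together with those $v_n/w_n$ with $\alpha_n<1$ or $\alpha_n^*<1$, where $\begin{pmatrix} t_n & v_n\\ u_n & w_n\end{pmatrix}=A_{d_1}\cdots A_{d_n}$. Then (Propositions~\ref{prop1} and~\ref{prop2}) it identifies the Rosen convergents with the subset cut out by the $\alpha_n^*$ condition alone, and the dual Rosen convergents with the subset cut out by the $\alpha_n$ condition alone; the union of the two conditions recovers the full set. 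The advantage of the paper's route is that the ``edge cases'' you anticipate simply dissolve: the split between Rosen and dual Rosen is exactly the split between the $\alpha_n^*$ and $\alpha_n$ clauses, and the asymmetric indexing and the shared term $r_0/s_0$ fall out of a short case analysis in Proposition~\ref{prop2}.

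Your direct approach could in principle be carried through, but there is a real gap in the Lagrange-style step. For the regular continued fraction the argument ``$q\alpha-p = A(s_i\alpha-r_i)+B(s_{i-1}\alpha-r_{i-1})$ forces $B=0$'' relies on the two errors having \emph{opposite} signs, so that $|B|\ge 1$ already makes $|q\alpha-p|$ too large. In the Rosen setting consecutive errors need not alternate sign (precisely when $\epsilon_{i+1}=-1$ they have the same sign), so you cannot conclude $B=0$ from sign considerations alone; you must also control magnitudes, and since $A,B$ live in $\mathbb Z\cup\sqrt 2\,\mathbb Z$ rather than $\mathbb Z$ the smallest nonzero value of $|B|$ is $1$, not bounded away from~$1$. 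This case does go through, but it needs an explicit inequality (essentially $|s_{i-1}\alpha-r_{i-1}| > 2|s_i\alpha-r_i|$ in the same-sign case, which is where the Rosen admissibility rule $a_i=1\Rightarrow\epsilon_{i+1}=+1$ enters) that your sketch does not supply. Likewise, your enumeration of candidates between $s_i$ and $s_{i+1}$ as the single family $\frac{j\sqrt 2\, r_i+\epsilon_{i+1}r_{i-1}}{j\sqrt 2\, s_i+\epsilon_{i+1}s_{i-1}}$ is incomplete: there is a second mediant family built from $r_i,r_{i-1}$ with the opposite sign, and it is from \emph{that} family that the extra dual-Rosen convergents arise. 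Until both of these points are handled, the proposal does not close.
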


The structure of the paper is as follows. 
In Section~\ref{sec:AR}, we review the Hecke group and give the $\mathbf H_4$-expansion of real numbers. 
In Section~\ref{sec:BA}, we characterize the $\mathbf H_4$-best approximations. 
In Section~\ref{sec:DS}, the proof of Theorem~\ref{thm:DS} is given. 
We discuss relation between the best approximation of the Rosen continued fraction and give the detail of the proof of Theorem~\ref{thm:BA} in Section~\ref{sec:RCF}.
Then, we discuss properties of the $\mathbf H_4$-best approximations and prove Theorem~\ref{thm2} in Section~\ref{sec:LT}.  

\section{\texorpdfstring{$\mathbf H_4$}{H4}-expansion of a real number}\label{sec:AR}

The Hecke (triangle Fuchsian) group $\mathbf H_{k}$ with an integer $k\ge 3$ is the subgroup of $\mathrm{PSL}_2(\mathbb R)$ generated by 
\begin{equation*}
T = \begin{pmatrix} 1 & \lambda_{k} \\ 0 & 1 \end{pmatrix} \quad \text{ and } \quad 
S = \begin{pmatrix} 0 & -1 \\ 1 & 0 \end{pmatrix},
\end{equation*}
where $\lambda_{k}=2\cos\left(\frac{\pi}{k}\right)$.
If $k = 3$, then $\lambda_3 = 1$ and $\mathbf H_3$ is the modular group $\mathrm{PSL}_2(\mathbb Z)$  
and $\mathbb Q (\mathbf H_3) = \mathbb Q$.  
When $k=4$, then we have $\lambda_{4}=\sqrt{2}$  
and 
$\mathbb Q (\mathbf H_4) = \sqrt 2 \mathbb Q$.   

In 1954, Rosen \cite{Ro54} introduced a class of continued fractions known as Rosen continued fractions in order to study the Hecke groups. 
The Rosen continued fraction expansion of a real number $\alpha$ terminates at a
finite term if and only if $\alpha$ is a parabolic fixed point of $\mathbf H_{k}$, see \cite{Ro54}. 
These points are clearly contained in the extension field $\mathbb{Q}(\lambda_{k})$ but in general, there are elements of
this field that have an infinite Rosen continued fraction expansion, see \cite{HMTY}.
Introduced to aid in the study of certain Fuchsian groups, these continued fractions were applied some thirty years later by J. Lehner \cite{Le85} in the study of asymptotic Diophantine approximation by orbits of these groups.
Following that, a rich theory on the Rosen continued fraction has emerged including studies on asymptotic Diophantine approximation, metrical theory associated to Rosen continued fractions and applications to dynamical systems, for details we refer the reader to \cites{Na10, RS92, SW16I}.
Since the aim of this article is to study Diophantine approximation on $\mathbf H_4$, we will therefore consider $k=4$ throughout the paper.

\begin{figure}
\centering
\begin{tikzpicture}[scale=4]
	\draw[thick] (0, 0) -- ({sqrt(2)}, 0);
	\draw[thick] (0,1.2) -- (0,1);
	\draw[thick] ({sqrt(2)},1.2) -- ({sqrt(2)},1);

    \node[below] at ({1/sqrt(2)},0) {$\frac{1}{\sqrt 2}$};
    \node[below] at (0,0) {$0$};
    \node[below] at ({sqrt(2)},0) {$\sqrt 2$};
    \draw[thick] (0,1) arc (90:45:1);
    \draw[thick] ({sqrt(2)},1) arc (90:135:1);
    \draw[thick,dashed,blue] ({1/sqrt(2)},1.2) -- ({1/sqrt(2)},{1/sqrt(2)});
  
\end{tikzpicture}
\qquad
\begin{tikzpicture}[scale=4]
	\draw[thick] (0, 0) -- ({sqrt(2)}, 0);
	\draw[thick] (0,1.2) -- (0,0) node [below] {$0$};
	\draw[thick] ({sqrt(2)},1.2) -- ({sqrt(2)},0) node [below] {$\sqrt 2$};

    \node[below] at ({1/sqrt(2)},0) {$\frac{1}{\sqrt 2}$};
    \draw[thick,red] (0,1) arc (90:19.47:1);
    \draw[thick,red] ({sqrt(2)},1) arc (90:160.53:1);
    \draw[thick,dashed,blue] ({1/sqrt(2)},1.2) -- ({1/sqrt(2)},0);
    \draw[thick,dashed,blue] (0,0) arc (180:0:{1/sqrt(2)});
  
    \draw[thick] (0,0) arc (180:0:{1/(2*sqrt(2))});
    \draw[thick] ({sqrt(2)},0) arc (0:180:{1/(2*sqrt(2))}); 
    \node[left] at (0,.8) {$\bm\delta_0$};
    \node[right] at ({sqrt(2)},.8) {$\bm\delta_3$};
    \node[below] at ({1/(2*sqrt(2))},.34) {$\bm\delta_1$};
    \node[below] at ({3/(2*sqrt(2))},.34) {$\bm\delta_2$};

\end{tikzpicture}

\caption{The fundamental domain $\bm\Omega$ of $\mathbb H_4$ (left) and the ideal quadrilateral $\bm\Delta$ (right)}
\label{fig:domain}
\end{figure}
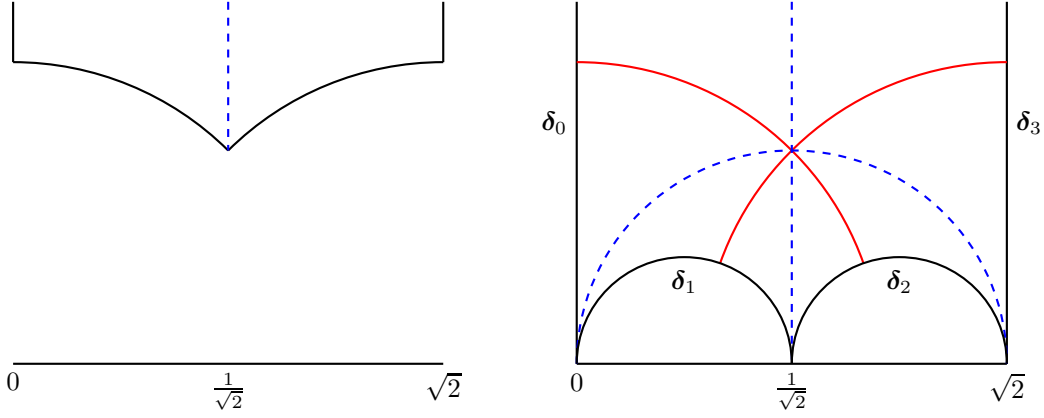

Let $\bm\Omega$ be another fundamental domain of $\mathbf H_4$ surrounded by geodesics $x = 0$, $x = \sqrt 2$, $|z| = 1$, and $|z - \sqrt 2 | = 1$ (see Figure~\ref{fig:domain} (left)).
Let 
$$
R = ST^{-1} = \begin{pmatrix} 0 & 1 \\ -1 & \sqrt 2 \end{pmatrix}.
$$
Then $R^4 = I$.
Let $\bm\Delta = \bm\Omega \cup R(\bm\Omega) \cup R^2(\bm\Omega) \cup R^3(\bm\Omega)$. 
Let $\bm\delta_0$ be the geodesic on $\mathbb H$ connecting boundary points $\infty$ and $0$ on $\hat{\mathbb R}$
and put $\bm\delta_i = R^i (\bm\delta_0)$ for $i = 1,2,3$.
Then the ideal quadrilateral $\bm\Delta$ is surrounded by $\bm\delta_i$, $i = 0,1,2,3$ (see Figure~\ref{fig:domain} (right)). 
Let $K_i = R^i SR^{-i}$ for $i = 0,1,2,3$ 
and $\bm\Gamma_4$ be the group generated by $K_0, K_1, K_2, K_3$.
Then $\bm\Delta$ is the fundamental domain of $\bm\Gamma_4$
and $\bm\Gamma_4$ is an index 4-subgroup of $\mathbf H_4$ (see \cite{HS86}). 
For any element $G \in \mathbf H_4$, there exists $H \in \bm\Gamma_4$ and $d \in \{ 0,1,2,3\}$ such that $G = H R^d$.
Denote
\begin{equation*}\label{def:M}
A_1 = RS = \begin{pmatrix} 1 & 0 \\ \sqrt 2 & 1 \end{pmatrix},
\quad 
A_2 = R^2S = \begin{pmatrix} \sqrt 2 & 1 \\ 1 & \sqrt 2 \end{pmatrix}, \quad
A_3 = R^3S = \begin{pmatrix} 1 & \sqrt 2 \\ 0 & 1 \end{pmatrix}. 
\end{equation*}
Then,  
for any element $G \in \mathbf H_4$, there exist $d_1, \dots, d_k \in \{ 1,2,3\}$ and $d \in \{ 0,1,2,3\}$ such that 
$G = A_{d_1} \cdots A_{d_k} R^d$
or $G = S A_{d_1} \cdots A_{d_k} R^d$.

We follow the symbolic coding of a geodesic on the  developed by Haas and Series \cite{HS86} and Series \cite{Ser88}
(see also \cites{KS}). 
Let $\bm\gamma$ be an oriented geodesic with end points 
$\bm\gamma^- \in (-\infty,0)$, $\bm\gamma^+ \in (0, \infty)$ and assume further that $\bm\gamma^-$ and $\bm\gamma^+$ does not belong to $\mathbb Q(\mathbf H_4)$.
Note that $\bm\gamma$ intersects $\bm\delta_0$ while passing from $S(\bm\Delta)$ to $\bm\Delta$.
By cutting $\mathscr T = \cup_{G \in \bm\Gamma_4} G(\partial \bm\Delta) = \cup_{G \in \mathbf H_4} G(\bm\delta_0)$, we have a sequence of geodesic segments 
$\dots, \bm\gamma_{-1}, \bm\gamma_0, \bm\gamma_{1}, \bm\gamma_{2}, \bm\gamma_{3}, \dots$ 
with end points $\bm\gamma_n^-, \bm\gamma_n^+ \in \mathscr T$ along the orientation of $\bm\gamma$,
satisfying that $\bm\gamma_n^+ = \bm\gamma_{n+1}^- \in \mathscr T$ for $n \in \mathbb Z$.
We choose $\bm\gamma_0$ to belong to $\bm\Delta$.
Then $\bm\gamma_0^- \in \bm\delta_0$ and there exists $d_1 \in \{ 1,2,3\}$ such that $\bm\gamma_0^+ \in \bm\delta_{d_1}$.
For each $n \in \mathbb N$, let $G_n \in \mathbf H_4$ be such that $\bm\gamma_n$ belongs to $G_{n}(\bm\Delta)$ and $\bm\gamma_n^- \in G_n(\bm\delta_0)$.
Choose $d_{n+1} \in \{1,2,3\}$ as $\bm\gamma_{n}^+ \in G_{n} (\bm\delta_{d_{n+1}})$.
Then by $\bm\gamma_{n+1}^- = \bm\gamma_{n}^+ \in G_{n} (\bm\delta_{d_{n+1}}) = G_{n} R^{d_{n+1}} (\bm\delta_0) = G_{n} R^{d_{n+1}} S(\bm\delta_0)$,
we have
$$G_{n+1} = G_{n} R^{d_{n+1}}S = G_{n}A_{d_{n+1}}.$$
We have for all $n \ge 1$ 
\begin{equation*}\label{GM}
G_{n} = A_{d_1} A_{d_2} \cdots A_{d_n}. 
\end{equation*}
Since $\bm\gamma$ crosses $G_{n} ( S (\bm\Delta))$ and then intersects $ G_{n}(\bm\Delta)$,
we have for all $n \ge 1$
\begin{equation*}\label{endpoints}
\bm\gamma^+ \in G_{n} \cdot (0, \infty) = A_{d_1} A_{d_2} \cdots A_{d_n} \cdot (0, \infty).
\end{equation*}
Let $\alpha = \bm\gamma^+$ be a positive real number.
We write
$$
\alpha = [ d_{1}, d_{2}, d_3, \dots ]
$$
and call it the \emph{$\mathbf H_4$-expansion} of $\alpha$. 
Check  
$$
[0, \infty] = \left[ 0,\frac{1}{\sqrt 2} \right] \cup \left[\frac{1}{\sqrt 2} , \sqrt 2 \right] \cup \left[\sqrt 2 , \infty \right] =
A_1 \cdot [0,\infty] \cup A_2 \cdot [0,\infty] \cup A_3 \cdot [0,\infty].
$$
Therefore, for each $n \ge 1$, we have
$$
[0, \infty] = \bigcup_{(d_1,\dots,d_n)\in \{1,2,3\}^n } A_{d_1} \cdots A_{d_n} \cdot [0,\infty].
$$
For any $\alpha \in [0,\infty]$, we have a sequence $(d_n)_{n \in \mathbb N} \in \{ 1,2,3\}^{\mathbb N}$  satisfying that
\begin{equation*}\label{A_n}
\alpha \in A_{d_1} \cdots A_{d_n} \cdot [0,\infty] \qquad \text{ for all } \  n \ge 1.
\end{equation*}
Then we write
$$
\alpha = [d_1, d_2, d_3, \dots ].
$$
Let 
$$
\alpha_n = [d_{n+1}, d_{n+2}, d_{n+3}, \dots ].
$$    
Then, it's directly deduced that 
$$
\alpha 
= A_{d_1} A_{d_2} \cdots A_{d_n} \cdot \alpha_n = G_n \cdot \alpha_n \in G_n \cdot [0,\infty].
$$   

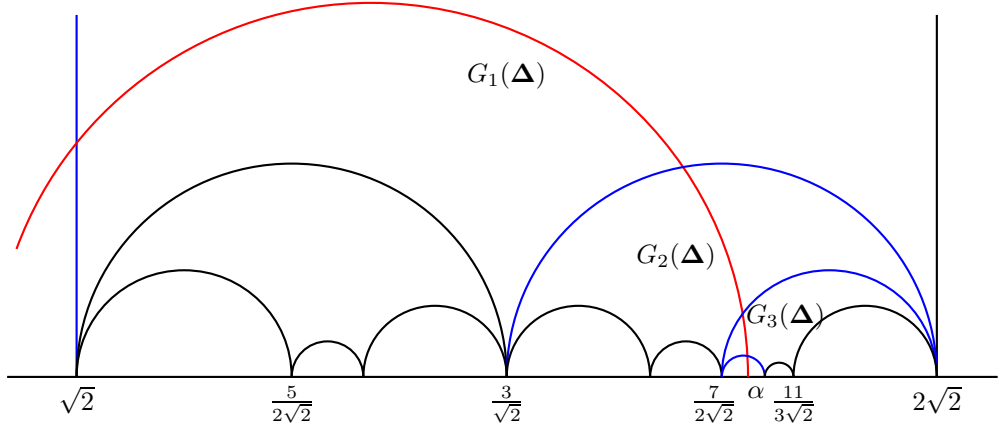
\begin{figure}
\centering
\begin{tikzpicture}[scale=8]
	\draw[thick] (1.3, 0) -- ({2*sqrt(2)+.1}, 0);
	\draw[thick, blue] ({sqrt(2)},0) -- ({sqrt(2)},.6);
	\draw[thick] ({2*sqrt(2)},.6) -- ({2*sqrt(2)},0); 
    \draw[thick, red] (2.518, 0)  arc (0:160:.62);
    \node at (2.518, 0) [below, xshift=3pt] {$\alpha$};

    \node[below] at ({sqrt(2)},0) {$\sqrt 2$};
    \node[below] at ({2*sqrt(2)},0) {$2\sqrt 2$};
    \node[below] at ({3/sqrt(2)},0) {$\frac{3}{\sqrt 2}$};
    \node[below] at ({5/(2*sqrt(2))},0) {$\frac{5}{2\sqrt 2}$};
    \node[below, xshift=-3pt] at ({7/(2*sqrt(2))},0) {$\frac{7}{2\sqrt 2}$};
    \node[below] at ({11/(3*sqrt(2))},0) {$\frac{11}{3\sqrt 2}$};

    \node at ({3/sqrt(2)},.5) {$G_1(\bm\Delta)$};   
    \node at (2.4,.2) {$G_2(\bm\Delta)$};   
    \node at (2.58,.10) {$G_3(\bm\Delta)$};   
    
    \draw[thick] ({sqrt(2)},0) arc (180:0:{1/(2*sqrt(2))});
    \draw[thick, blue] ({2*sqrt(2)},0) 
    arc (0:180:{1/(2*sqrt(2))});

    \draw[thick] ({sqrt(2)},0) arc (180:0:{1/(4*sqrt(2))});
    \draw[thick, blue] ({2*sqrt(2)},0) 
    arc (0:180:{1/(4*sqrt(2))});
    
    \draw[thick] ({3/sqrt(2)},0) arc (0:180:{1/(6*sqrt(2))});
    \draw[thick] ({3/sqrt(2)},0) arc (180:0:{1/(6*sqrt(2))});

    \draw[thick] ({5/(2*sqrt(2))},0) arc (180:0:{1/(12*sqrt(2))});
    \draw[thick] ({7/(2*sqrt(2))},0) arc (0:180:{1/(12*sqrt(2))});
    
    \draw[thick] ({2*sqrt(2)},0) arc (0:180:{1/(6*sqrt(2))});
    \draw[thick, blue] ({7/(2*sqrt(2))},0) arc (180:0:{1/(20*sqrt(2))}); 
    \draw[thick] ({11/(3*sqrt(2))},0) arc (0:180:{1/(30*sqrt(2))});
    
\end{tikzpicture}
\caption{Cutting sequence along the geodesic from $\infty$ to $\alpha = [3,2,3,1,\dots]$}
\label{fig:cuttingsq}
\end{figure}

Note that 
$$
0 = [ 1, 1, 1, \dots] = [1^\infty], \quad 
1 = [ 2, 2, 2, \dots] = [2^\infty], \quad
\infty = [ 3, 3, 3, \dots] = [3^\infty].
$$
If $\alpha \in \mathbb Q(\mathbf H_4) \cap (0,\infty)$, then there are exactly two such sequences. 
For examples, we have 
$$
[1, 3^\infty] = A_1 \cdot \infty = \frac{1}{\sqrt 2} = A_2 \cdot 0 = [ 2, 1^\infty], \quad
[2, 3^\infty] =  A_2 \cdot \infty = \sqrt 2 = A_3 \cdot 0 = [ 3, 1^\infty].
$$

\begin{example}
%
For $\alpha = \frac{3 + \sqrt{17}}{2\sqrt{2}}$
we check that 
$$
\alpha = [(323121)^\infty] = [3,2,3,1,2,1,3, \dots ].
$$
Note that $G_1= A_3$, $G_2 = A_3 A_2$, $G_3 = A_3 A_2 A_3$, $G_4 = A_3 A_2 A_3 A_1$ and   
\begin{align*}   
\alpha &\in G_1 \cdot (0,\infty) = \left[ \sqrt 2 , \infty  \right], & 
\alpha &\in G_2 \cdot (0,\infty) = \left[ \frac{3}{\sqrt 2}, 2\sqrt2 \right], \\
\alpha &\in G_3 \cdot (0,\infty) = \left[ \frac{7}{2\sqrt 2}, 2\sqrt2 \right], &  
\alpha &\in G_4 \cdot (0,\infty) = \left[ \frac{7}{2\sqrt 2}, \frac{9\sqrt2}{5} \right].
\end{align*}   
See Figure~\ref{fig:cuttingsq}. 
For the numbers of periodic $\mathbf H_4$-expansion, consult \cite{CK20}.
\end{example}

\section{\texorpdfstring{$\mathbf H_4$}{H4}-best approximations}
\label{sec:BA}

\begin{figure}
\centering
\begin{tikzpicture}[scale=2.6]

	\draw[thick] (-.6, 0) -- (4.6, 0);

	\draw[ultra thick] (-.6,1.4142) -- (4.6,1.4142); 
	\draw[ultra thick] (-0,0) node [below] {$\frac{0}{1}$} arc (-90:270:0.7071); 
	\draw[ultra thick] (2,0) node [below ] {$\frac{\sqrt 2}{1}$} arc (-90:270:0.70711); 
	\draw[ultra thick] (4,0) node [below] {$\frac{2\sqrt 2}{1}$} arc (-90:270:0.7071); 

	\draw (0,1.6) -- (0,0);
	\draw (2,1.6) -- (2,0);
	\draw (4,1.6) -- (4,0);

	\draw[ultra thick] (1,0) node [below] {$\frac1{\sqrt 2}$} arc (-90:270:0.35355); 
	\draw[ultra thick] (3,0) node [below] {$\frac{3}{\sqrt 2}$} arc (-90:270:0.35355); 

    \draw (0,0) arc (180:0:1/2);
    \draw (1,0) arc (180:0:1/2);
    \draw (2,0) arc (180:0:1/2);
    \draw (3,0) arc (180:0:1/2);
    
	\draw[ultra thick] (1/2,0) node [below left] {\small $\frac{1}{2\sqrt 2}$} arc (-90:270:0.08839); 
	\draw[ultra thick] (2/3,0) node [below] {\small $\frac{\sqrt 2}{3}$} arc (-90:270:0.07857); 
	
    \draw (0,0) arc (180:0:1/4);
    \draw (1/2,0) arc (180:0:1/12);
    \draw (2/3,0) arc (180:0:1/6);

	\draw[ultra thick] (4/3,0) node [below] {\small $\frac{2\sqrt 2}{3}$} arc (-90:270:0.07857); 
	\draw[ultra thick] (3/2,0) node [below right] {\small $\frac{3}{2\sqrt 2}$} arc (-90:270:0.08839); 

    \draw (1,0) arc (180:0:1/6);
    \draw (4/3,0) arc (180:0:1/12);
    \draw (3/2,0) arc (180:0:1/4);
\end{tikzpicture}
\caption{Ford circles for the Hecke group $\mathbf H_4$}
\label{fig:FC}
\end{figure}
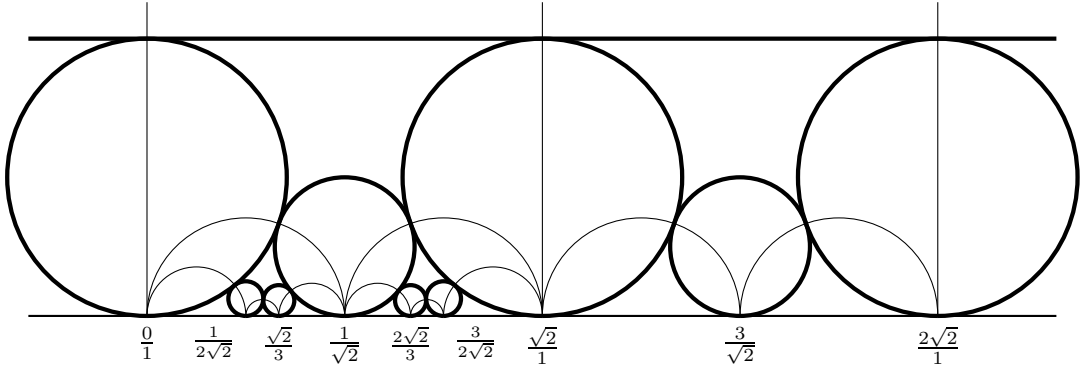

In this section, we characterize the set of $\mathbf H_4$-best approximations by using $\mathbf H_4$-expansions.
We assume that $q$ is non-negative for $p/q\in \mathbb Q(\mathbf H_4)$.

\emph{The Ford circles} are circles on the upper half-plane, which are tangent to the real line at the base point at $p/q \in \mathbb Q (\mathbf H_4)$ with Euclidean radius $1/(2q^2)$, see Figure~\ref{fig:FC}.
For example, the radius of the Ford circle based at $0/1$ is 1/2 and the radius of the Ford circle based at $1/\sqrt 2$ is 1/4.
Two Ford circles are either tangent or disjoint. 
Indeed, two Ford circles based at $p/q$ and $r/s$ are tangent if and only if 
\begin{equation*}
| ps - rq| = 1.
\end{equation*}
Note that for distinct 
$p/q, r/s \in \mathbb Q (\mathbf H_4)$, we have
$ps - rq  \in \mathbb Z \setminus \{0\}$ or $ps - rq  \in \sqrt 2 \mathbb Z \setminus \{0\}$. 
Therefore, we have 
$| ps - rq| \ge 1.$
We note that the Ford circles are invariant under the action of $\mathbf H_4$.

\begin{lemma}\label{lem1}
Let $a/c$, $b/d \in \mathbb Q(\mathbf H_4)$ be numbers in $\mathbb Q (\mathbf H_4)$ such that $| ad - bc| = 1$, equivalently the Ford circles based at $a/c$ and $b/d$ are tangent. 
If $p/q \in \mathbb Q(\mathbf H_4)$ is between $a/c$ and $b/d$, then we have
$$ q \ge c + d.$$ 
\end{lemma}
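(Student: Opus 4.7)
The plan is to mimic the classical Farey-neighbor argument, adapted to the arithmetic of $\mathbb{Q}(\mathbf{H}_4)$. The crucial input is already in hand: the paper has just observed that for any two distinct elements $u/v, r/s \in \mathbb{Q}(\mathbf{H}_4)$, the cross term $us - rv$ lies in $\mathbb{Z}\setminus\{0\}$ or in $\sqrt{2}\,\mathbb{Z}\setminus\{0\}$, and in particular $|us - rv| \ge 1$. This substitutes for the usual integrality of $ps - qr$ in the modular setting.

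First I would normalize: by swapping the roles of $a/c$ and $b/d$ if necessary, assume $a/c < b/d$. Since $b/d - a/c = (bc-ad)/(cd)$ with $c,d>0$, the hypothesis $|ad-bc|=1$ together with $b/d - a/c > 0$ forces $bc - ad = 1$. Next, from $a/c < p/q < b/d$ I would write
\[
\frac{p}{q} - \frac{a}{c} = \frac{pc - aq}{qc} > 0,\qquad \frac{b}{d} - \frac{p}{q} = \frac{bq - pd}{qd} > 0.
\]
Since $p/q \neq a/c$ and $p/q \neq b/d$, both numerators $pc-aq$ and $bq-pd$ are nonzero elements of $\mathbb{Z}$ or of $\sqrt{2}\,\mathbb{Z}$, and by the observation above each has absolute value at least $1$. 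Being positive, they are therefore $\ge 1$, yielding
\[
\frac{p}{q} - \frac{a}{c} \ge \frac{1}{qc}, \qquad \frac{b}{d} - \frac{p}{q} \ge \frac{1}{qd}.
\]

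Finally, I would add these two inequalities and compare to the exact value
\[
\frac{b}{d} - \frac{a}{c} = \frac{bc - ad}{cd} = \frac{1}{cd}.
\]
This gives $\frac{1}{cd} \ge \frac{1}{qc} + \frac{1}{qd} = \frac{c+d}{qcd}$, whence $q \ge c+d$. No step is really a serious obstacle, since the only genuinely $\mathbf{H}_4$-specific input is the lower bound $|us - rv| \ge 1$ stated right before the lemma; everything else is formally identical to the classical Farey argument. The one point to be slightly careful about is that $bc - ad$ is actually an integer equal to $1$ (and not a $\sqrt{2}$-multiple), but this is automatic from the hypothesis $|ad - bc| = 1$.
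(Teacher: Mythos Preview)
Your proof is correct and is essentially the same argument as the paper's own proof: both write the length of the interval $|a/c - b/d| = 1/(cd)$ as the sum of the two subintervals cut by $p/q$, bound each subinterval from below by $1/(qc)$ and $1/(qd)$ using the observation $|ps-rq|\ge 1$, and conclude $q \ge c+d$. Your version is slightly more explicit about the sign normalization and the justification of the lower bounds, but the content is identical.
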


\begin{proof}
Without loss of generality, let us assume that $a/c>b/d$.
We prove it directly from the following calculation:
\begin{equation*}
\frac{1}{cd} = \frac ac - \frac{b}{d} = \frac ac - \frac pq + \frac pq - \frac bd 
\ge \frac{1}{cq} + \frac{1}{dq} = \frac{d+c}{cdq}.
\qedhere
\end{equation*}
\end{proof}

Let $\alpha = [d_1, d_2, \dots] \in (0,\infty) \setminus \mathbb Q(\mathbf H_4)$.  
Denote
$$\begin{pmatrix} t_n & v_n \\ u_n & w_n \end{pmatrix} := G_n = A_{d_1} A_{d_2} \cdots A_{d_n}
$$ 
with the convention that $G_0=I$.
Then we have 
\begin{equation}\label{Galphan}
\alpha = G_n \cdot \alpha_n 
\in G_n \cdot ( 0,\infty) = \left( \frac{v_n}{w_n}, \, \frac{t_n}{u_n} \right).  
\end{equation}
Note that 
$$
\frac{v_n}{w_n} = [d_{1}, \dots , d_{n}, 1^\infty ], \qquad
\frac{t_n}{u_n} = [d_{1}, \dots , d_{n}, 3^\infty ]. 
$$

\begin{lemma}\label{lem3}
Let $\alpha$ be a positive real number not in $\mathbb Q(\mathbf H_4)$ and $p/q\in \mathbb Q(\mathbf H_4)$.
If 
$$
\frac pq \notin \left[ \frac{v_n}{w_n}, \, \frac{t_n}{u_n} \right] = 
G_n \cdot [0,\infty],
$$
then
$$ |q\alpha - p| > | u_n\alpha - t_n| \quad \text{ and } \quad |q\alpha - p| > | w_n\alpha - v_n|.$$ 
In particular, if $\frac pq \notin [ \frac{v_n}{w_n}, \, \frac{t_n}{u_n} ]$ 
and $q \ge \min\{ u_n, w_n\}$, 
then $p/q$ is not an $\mathbf H_4$-best approximation of $\alpha$.
\end{lemma}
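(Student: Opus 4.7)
The plan is to apply the Möbius change of coordinates $G_n^{-1}$, which sends the interval $[v_n/w_n, t_n/u_n]$ to $[0,\infty]$ on $\hat{\mathbb R}$. Set $\alpha_n = G_n^{-1}\alpha \in (0,\infty)$ and define $(p',q')$ by $\binom{p}{q} = G_n \binom{p'}{q'}$. Using $\det G_n = t_n w_n - v_n u_n = 1$ and the formula $\alpha_n = (w_n \alpha - v_n)/(t_n - u_n \alpha)$, I would check the algebraic identity
\[
q\alpha - p = (u_n p' + w_n q')\alpha - (t_n p' + v_n q') = (t_n - u_n \alpha)(q'\alpha_n - p').
\]
Since $\alpha \in (v_n/w_n, t_n/u_n)$, the factor $t_n - u_n\alpha$ is positive, so $|q\alpha - p| = (t_n - u_n \alpha)\,|q' \alpha_n - p'|$. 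Specializing $(p',q') = (1,0)$ and $(0,1)$ recovers $|u_n \alpha - t_n| = t_n - u_n \alpha$ and $|w_n \alpha - v_n| = \alpha_n (t_n - u_n \alpha)$, so the two target inequalities reduce to the clean statements $|q'\alpha_n - p'| > 1$ and $|q'\alpha_n - p'| > \alpha_n$.

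Next I would exploit the sign and size of $(p',q')$. Because $\det G_n > 0$, $G_n$ preserves orientation on $\hat{\mathbb R}$, so the hypothesis $p/q \notin [v_n/w_n, t_n/u_n]$ forces $p'/q' \in \hat{\mathbb R} \setminus [0, \infty]$; in particular $p'$ and $q'$ are both nonzero and of opposite signs. By definition of $\mathbb Q(\mathbf H_4)$ there is a matrix $M \in \mathbf H_4$ with first column $(p,q)^T$, and since $G_n^{-1} \in \mathbf H_4$, the product $G_n^{-1} M \in \mathbf H_4$ has first column $(p',q')^T$. The explicit description of $\mathbf H_4$ from the introduction then forces $p', q' \in \mathbb Z \cup \sqrt 2\,\mathbb Z$, and every nonzero element of $\mathbb Z \cup \sqrt 2\,\mathbb Z$ has absolute value at least $1$. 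Since $p'$ and $q'$ have opposite signs and $\alpha_n > 0$,
\[
|q'\alpha_n - p'| = |q'|\alpha_n + |p'| \geq \alpha_n + 1,
\]
which strictly exceeds both $1$ and $\alpha_n$, yielding the two required inequalities.

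For the final assertion, set $q^* = \min(u_n, w_n)$ and let $p^*$ be the corresponding numerator ($t_n$ if $q^* = u_n$, else $v_n$). Then $p^*/q^* \in [v_n/w_n, t_n/u_n]$ differs from $p/q$ by hypothesis, the assumption gives $q^* \leq q$, and the first part provides $|q^*\alpha - p^*| < |q\alpha - p|$, contradicting the definition of $\mathbf H_4$-best approximation. The main obstacle will be the bookkeeping that verifies $(p',q')$ really lies in $\mathbb Z \cup \sqrt 2\,\mathbb Z$ (handling the two types of matrices in the presentation of $\mathbf H_4$ separately) and checking orientation; the remaining algebra is just the one-line Möbius identity combined with $\det G_n = 1$.
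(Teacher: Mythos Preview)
Your argument is correct and takes a somewhat different route from the paper's. Both proofs rest on the same arithmetic input—that for distinct $p/q,\, r/s \in \mathbb Q(\mathbf H_4)$ one has $|ps-rq|\ge 1$—but organize the subsequent inequalities differently. The paper argues by direct case analysis: assuming $v_n/w_n < \alpha < t_n/u_n < p/q$, it uses $1 \le u_np - t_nq$ and $1 \le pw_n - qv_n$ together with mediant-type comparisons (and a further three-way split on the sign of $u_n-q$) to derive each target inequality, then declares the case $p/q < v_n/w_n$ symmetric. Your M\"obius change of coordinates collapses all of this into the single identity $q\alpha - p = (t_n - u_n\alpha)(q'\alpha_n - p')$ together with $|p'|,|q'|\ge 1$ and $\sign(p')\ne\sign(q')$, giving $|q'\alpha_n - p'| \ge \alpha_n + 1$ with no case split at all. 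This is conceptually cleaner and makes the role of the integrality transparent; the paper's version is a bit more elementary in that it never invokes the matrix realization of $p'/q'$ inside $\mathbf H_4$. One small remark: in your final paragraph the choice $q^* = \min(u_n,w_n)$ tacitly assumes both $u_n,w_n>0$, which fails for $n\le m(\alpha)$ (where $u_n=0$); this is harmless since the lemma is only ever applied for $n\ge m(\alpha)+1$.
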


\begin{proof}
Suppose that $v_n/w_n < \alpha < t_n/u_n < p/q$.
Since
$
t_nw_n - v_nu_n = 1 \le u_n p - t_n q, 
$
we deduce that 
$$
t_n(w_n+q) \le u_n(p+v_n).
$$
Therefore,
$$
\alpha < \frac{t_n}{u_n} \le \frac{p+v_n}{q+w_n}. 
$$
Thus we have
$$
0 < w_n \alpha-v_n < p - q\alpha.
$$

Also since 
$t_nw_n - v_nu_n = 1 \le p w_n - q v_n,$
if $u_n > q$, then
we have 
$$
\frac{t_n - p}{u_n - q} \le \frac {v_n}{w_n} < \alpha,
\quad \text{ thus } \quad
t_n - u_n\alpha < p - q\alpha.
$$
If $u_n < q$, then we have
$$
\alpha < \frac{t_n}{u_n} < \frac{p-t_n}{q-u_n}, \quad \text{ thus }
\quad
t_n - u_n\alpha < p - q\alpha. 
$$
If $u_n = q$, then $t_n < p$, thus 
$$ t_n - u_n\alpha < p - q\alpha.
$$
The proof for the case that $p/q < v_n/w_n < \alpha < t_n/u_n$ is similar.
\end{proof}

\begin{lemma}\label{lem:BA2}
Let $\alpha = [d_1, d_2, \dots]$ be a real number not in $\mathbb Q (\mathbf H_4)$.
Let $m \ge 0$ be the integer such that $d_1 = \dots = d_m = 3$ and $d_{m+1} \ne 3$.
An $\mathbf H_4$-best approximation is $t_n/u_n$ or $v_n/w_n$ for some $n \ge m+1$.
\end{lemma}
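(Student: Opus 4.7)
The plan is to analyze where the best approximation $p/q$ sits within the nested chain of intervals $[L_n, R_n] := [v_n/w_n, t_n/u_n] = G_n \cdot [0,\infty]$, which shrinks to $\alpha$. Since $p/q \ne \alpha$, there is a largest $N$ with $p/q \in [L_N, R_N]$, and consequently $p/q \notin [L_{N+1}, R_{N+1}]$. My goal is to show $p/q$ must coincide with one of the endpoints $L_N$ or $R_N$, and then verify the index bound $n \ge m+1$ separately.

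To show $p/q \in \{L_N, R_N\}$, I would partition $[L_N, R_N]$ into the three sub-intervals $G_N A_j \cdot [0,\infty]$ for $j = 1,2,3$ with interior division points $G_N(1/\sqrt{2})$ and $G_N(\sqrt{2})$. A key Ford-circle observation is that each consecutive pair among $(0, 1/\sqrt{2}), (1/\sqrt{2}, \sqrt{2}), (\sqrt{2}, \infty)$ satisfies $|ad - bc| = 1$ and hence has tangent Ford circles, and tangency is preserved under $G_N$. If $p/q$ lies strictly inside one of the open sub-intervals different from $[L_{N+1}, R_{N+1}]$, Lemma~\ref{lem1} forces a lower bound $q \ge w_N + w_{N+1}$ or $q \ge u_N + u_{N+1}$; combined with the inequalities of Lemma~\ref{lem3} at level $N+1$, namely $|q\alpha - p| > |w_{N+1}\alpha - v_{N+1}|$ and $|q\alpha - p| > |u_{N+1}\alpha - t_{N+1}|$, one of $L_{N+1}$ or $R_{N+1}$ is a strictly better approximation with denominator $\le q$, contradicting the best-approximation property.

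The subtle residual sub-case is when $p/q$ equals an interior division point $G_N(1/\sqrt{2})$ or $G_N(\sqrt{2})$ that is not also an endpoint of $[L_{N+1}, R_{N+1}]$, namely $p/q = G_N(\sqrt{2})$ with $d_{N+1} = 1$, or $p/q = G_N(1/\sqrt{2})$ with $d_{N+1} = 3$. Here the relevant endpoints (for instance $G_N(1/\sqrt{2})$ and $R_N$) are not tangent, so Lemma~\ref{lem1} does not directly apply. I would compute $|q\alpha - p|$ using the Möbius difference $G_N(x) - G_N(y) = (x - y)/[(u_N x + w_N)(u_N y + w_N)]$ with $y = \alpha_N$. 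For $p/q = G_N(\sqrt{2})$ this yields $|q\alpha - p| = (\sqrt{2} - \alpha_N)/(u_N \alpha_N + w_N)$, which combined with the identity $|w_N \alpha - v_N| = \alpha_N/(u_N \alpha_N + w_N)$ and the constraint $\alpha_N \in (0, 1/\sqrt{2})$ forced by $d_{N+1} = 1$ gives $|q\alpha - p| > |w_N \alpha - v_N|$; since $w_N \le q$ and $L_N \neq p/q$, $L_N$ is a better approximation. The symmetric sub-case is handled by comparison with $R_N$ (or with $L_{N+1}$ when the degenerate $u_N = 0$ occurs).

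Finally, the index bound $n \ge m+1$ comes from directly examining the degenerate indices $n \le m$: there $G_n = A_3^n$, so $L_n = n\sqrt{2}$ (denominator $1$) and $R_n = \infty$ (inadmissible). The point $L_n$ for $n < m$ is strictly beaten by $L_m = m\sqrt{2}$ at the same denominator, and $L_m$ itself either coincides with $L_{m+1}$ when $d_{m+1} = 1$ or is beaten by $R_{m+1} = (m+1)\sqrt{2}$ at denominator $1$ when $d_{m+1} = 2$. I expect the principal technical obstacle to be the interior-division-point sub-case, where the correct comparison endpoint depends on both $d_{N+1}$ and whether $N \ge m+1$, and where the Möbius computation must track carefully the sign and magnitude of $\sqrt{2} - \alpha_N$ (resp.\ $1 - \sqrt{2}\alpha_N$) imposed by the constraint $\alpha_N \in A_{d_{N+1}} \cdot [0,\infty]$.
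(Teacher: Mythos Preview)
Your approach is correct and reaches the same conclusion, but by a genuinely different route. The paper selects the index $n$ by where the \emph{denominator} $q$ falls in the increasing sequence $\min\{u_i, w_i\}$: since $\min\{u_{m+1}, w_{m+1}\} = 1$, there is an $n \ge m+1$ with $\min\{u_n, w_n\} \le q < \min\{u_{n+1}, w_{n+1}\}$. The resulting upper bound $q < \min\{\sqrt 2\, u_n + w_n,\ u_n + \sqrt 2\, w_n\}$ then excludes, via three applications of Lemma~\ref{lem1}, each open sub-interval of $[v_n/w_n,\, t_n/u_n]$, and simultaneously forces $p/q$ to differ from the two interior division points (whose denominators are $u_n + \sqrt 2\, w_n$ and $\sqrt 2\, u_n + w_n$, both exceeding $q$). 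Since $p/q \ne t_n/u_n,\, v_n/w_n$ by hypothesis, $p/q$ lies outside the closed interval, and the lower bound $q \ge \min\{u_n, w_n\}$ feeds directly into Lemma~\ref{lem3}.

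Your choice of $N$ by the \emph{position} of $p/q$ gives no a priori upper bound on $q$; this is exactly why you must dispose of the interior division points by a separate M\"obius computation and why the range $N \le m$ requires its own tail argument. Both routes work, but the paper's indexing trades the small observation that $\min\{u_i, w_i\}$ is nondecreasing for the elimination of those two extra sub-cases. One minor imprecision in your write-up: the bounds you quote from Lemma~\ref{lem1}, ``$q \ge w_N + w_{N+1}$ or $q \ge u_N + u_{N+1}$'', are not literally what the lemma yields in every configuration (for instance when $d_{N+1} = 1$ and $p/q$ lies in the third sub-interval, the bound is $(\sqrt 2 + 1)u_N + w_N$, which need not dominate either expression). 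What you actually need, and what does always follow from the sub-interval bounds, is $q \ge \min\{u_{N+1}, w_{N+1}\}$, so Lemma~\ref{lem3} at level $N+1$ applies.
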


\begin{proof}
For $n \le m$, we have $u_n = 0$.
If $d_{m+1} = 1$, then $u_{m+1} = \sqrt 2$, $w_{m+1} = 1$.
If $d_{m+1} = 2$, then $u_{m+1} = 1$, $w_{m+1} = \sqrt 2$. 
Therefore, $\min\{ u_{m+1}, w_{m+1} \} = 1$.

For all $i\ge1$, we have
\begin{equation}\label{uw}
u_{i+1} =
\begin{cases}
u_i + \sqrt 2 w_i, &\text{ if } \ d_{i+1} = 1,\\
\sqrt 2 u_i + w_i, &\text{ if } \ d_{i+1} = 2,\\
u_i, &\text{ if } \ d_{i+1} = 3,
\end{cases}
\qquad 
w_{i+1} =
\begin{cases}
w_i, &\text{ if } \ d_{i+1} = 1,\\
u_i + \sqrt 2 w_i, &\text{ if } \ d_{i+1} = 2,\\
\sqrt 2 u_i + w_i, &\text{ if } \ d_{i+1} = 3.
\end{cases}
\end{equation}
Let $p/q \in \mathbb Q (\mathbf H_4)$ be such that $p/q\not=t_n/u_n$ and $p/q\not=v_n/w_n$ for all $n \ge m+ 1$.
By \eqref{uw}, $\min\{u_i,w_i\}$ is increasing.
Then there exists $n \ge m+ 1$ such that
\begin{equation*}\label{bbb1}
\min\{ u_n, w_n \} \le q < \min\{ u_{n+1}, w_{n+1} \}.
\end{equation*}
By using \eqref{uw}, we have  
\begin{equation}\label{bbb2}
q < \min\{ u_{n+1}, w_{n+1} \} \le 
\min \{ \sqrt 2 u_n + w_n,  u_n + \sqrt 2 w_n \}.
\end{equation}
Therefore, Lemma~\ref{lem1} implies that 
$$ \frac pq \notin \left(\frac{v_n}{w_n}, \frac{t_n + \sqrt 2  v_n}{u_n + \sqrt 2  w_n} \right) \cup  
\left(\frac{t_n + \sqrt 2  v_n}{u_n + \sqrt 2  w_n}, \frac{\sqrt 2 t_n + v_n}{\sqrt 2 u_n + w_n} \right) \cup
\left(\frac{\sqrt 2 t_n + v_n}{\sqrt 2 u_n + w_n}, \frac{t_n}{u_n}\right).$$
By the assumption that $p/q \neq t_n/u_n, v_n/w_n$ and \eqref{bbb2}, we have 
$$ \frac pq \notin \left[ \frac{v_n}{w_n}, \frac{t_n}{u_n} \right].
$$
Then, by Lemma~\ref{lem3}, we conclude that $p/q$ is not an $\mathbf H_4$-best approximation of $\alpha$.
\end{proof}

From now on, we denote by $m=m(\alpha)$ the integer $m$ such that $d_i=3$ for $1\le i\le m$ and $d_{m+1}\not=3$, where $\alpha=[d_1,d_2,\dots]$, as above.

\begin{remark}
Lemma~\ref{lem:BA2} does not hold for general Hecke group $\mathbf H_k$.
For an example, if we consider $k = 5$, then $\lambda_5 = \frac{\sqrt 5 + 1}{2}$.
One can construct the $\mathbf H_5$-expansion in the same manner we introduced in Section~\ref{sec:AR}.
Note that $R = \begin{pmatrix}0&1\\-1&\lambda_5\end{pmatrix}$, thus we have an expansion with four letters since $R^5=I$.
Let $\alpha \in \left( \frac{\lambda_5}{2\lambda_5+1}, \frac{\lambda_5}{\lambda_5+2} \right)$.
Then we have 
$$G_1 = \begin{pmatrix} 1 & 0 \\ \lambda_5 & 1 \end{pmatrix} \quad \text{ and } \quad  
G_2 = \begin{pmatrix} \lambda_5 & \lambda_5 \\ \lambda_5 +2 & 2 \lambda_5 + 1 \end{pmatrix}.
$$
However, if $\alpha \in \left( \frac{\lambda_5}{2\lambda_5+1}, \frac{2}{3\lambda_5} \right)$, then $\frac{1}{2\lambda_5}$ is an $\mathbf H_5$-best approximation of $\alpha$. 
Indeed, there is no matrix of $\begin{pmatrix} 1 & v \\ 2\lambda_5 & w \end{pmatrix}$ or $\begin{pmatrix} t & 1 \\ u & 2\lambda_5 \end{pmatrix}$ in $\mathbf H_5$ satisfying that $\alpha \in \left( \frac{v}{w}, \frac{1}{2\lambda_5}\right)$ or $\alpha \in \left( \frac{1}{2\lambda_5}, \frac{t}{u}\right)$.
One can see it using Ford circles of $\mathbf H_5$. 
Thus, $G_n\cdot0$, $G_n\cdot\infty$ are not enough to characterize $\mathbf H_5$-best approximations.
\end{remark}

Let
\begin{equation}\label{eq:HJ}
H = \begin{pmatrix} -1 & 0 \\ 0 & 1 \end{pmatrix}, \qquad
J = \begin{pmatrix} 0 & 1 \\ 1 & 0 \end{pmatrix}, 
\qquad
d^\vee = \begin{cases}
1 , & \text{ if } \ d = 3, \\
2 , & \text{ if } \ d = 2, \\
3 , & \text{ if } \ d = 1.
\end{cases}
\end{equation}
Then we have that
\begin{equation}\label{MJ}
A_d^{-1} = HA_dH, \qquad 
A_{d^\vee} = JA_dJ. 
\end{equation}
Since $G_n = \begin{pmatrix} t_n & v_n \\ u_n & w_n \end{pmatrix}$,
therefore, for $\alpha = [d_1,d_2,\dots] \in [0, \infty]$, we have 
$$
A_{d_n} \cdots A_{d_2} A_{d_1} = H A^{-1}_{d_n} \cdots A^{-1}_{d_2} A^{-1}_{d_1} H = H G^{-1}_n H= \begin{pmatrix} w_n & v_n \\ u_n & t_n \end{pmatrix}.   
$$
Let     
$$
\alpha^*_n  = \frac{w_n}{u_n} = [d_n, d_{n-1}, \dots, d_1, 3^\infty]  
$$
for $n \ge 0$.
Then 
$$
G_n^{-1} \cdot \alpha = \alpha_n, \qquad
G_n^{-1} \cdot \infty = - \alpha^*_n.
$$
Therefore $G_n^{-1}$ sends the geodesic connecting $\infty$ and $\alpha$ to the geodesic connecting $-\alpha^*_n$ and $\alpha_n$.
Note that $\alpha_n^*\not=1$.
See Figure~\ref{fig:geod}.

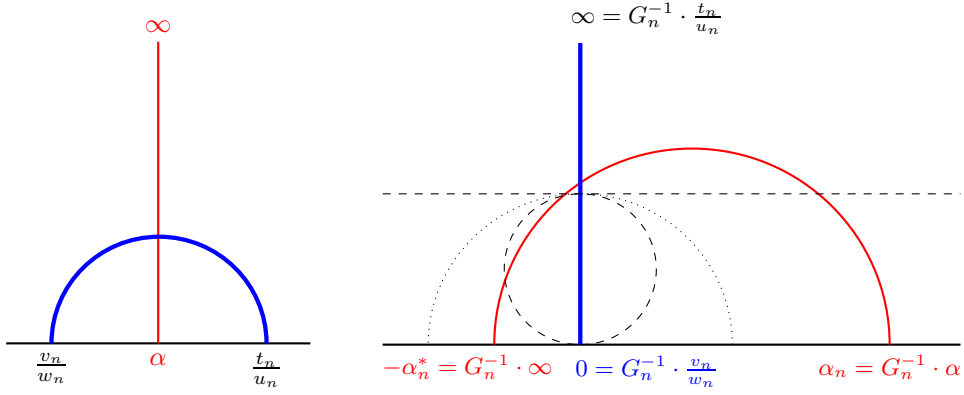
\begin{figure}
\centering
\begin{tikzpicture}[scale=40]
	\draw[thick] (2.46, 0) -- (2.56, 0);
    \draw[thick, red] (2.51,.1) -- (2.51, 0) node [below] {$\alpha$};
    \node[above, red] at (2.51,.1) {$\infty$};
    \node[below] at ({7/(2*sqrt(2))},0) {$\frac{v_n}{w_n} $};
    \node[below] at ({(9*sqrt(2)/5)},0) {$\frac{t_n}{u_n}$};

    \draw[ultra thick, blue] ({7/(2*sqrt(2))},0) arc (180:0:{1/(20*sqrt(2))});
\end{tikzpicture}
\qquad
\begin{tikzpicture}[scale=2]
	\draw[thick] (-1.3, 0) -- (2.5, 0);

    \draw[thick, red] ({-2*sqrt(2)/5},0) arc (180:0:1.3)  node [below] {\small $\alpha_n = G_n^{-1} \cdot \alpha$};
    \node[below, red, xshift=-10pt] at ({-2*sqrt(2)/5},0) {\small $-\alpha^*_n = G_n^{-1} \cdot \infty$}; 
    
    \draw[ultra thick, blue] (0,2) -- (0,0) node [below, xshift=25pt] {\small $0 = G_n^{-1} \cdot \frac{v_n}{w_n}$};
	\node[above, xshift=25pt] at (0,2) {\small $\infty = G_n^{-1} \cdot \frac{t_n}{u_n}$};

    \draw[dashed] (-1.3, 1) -- (2.5, 1);
    \draw[dashed] (0,0) arc (-90:270:0.5); 

    \draw[dotted] (1,0) arc (0:180:1); 


\end{tikzpicture}
\caption{The action of $G^{-1}_n$ on the upper half plane $\mathbb H$. 
The dashed circles are Ford circles based at 0 and infinity, and the dotted line is the geodesic with end points $-1$ and $1$.}
\label{fig:geod}
\end{figure}

\begin{lemma}\label{lem:BA1}
Let $\alpha = [d_1, d_2, \dots ]$
be a positive real number not in $\mathbb Q(\mathbf H_4)$.
For $n \ge m(\alpha)+1$, we have the following statements.
\begin{enumerate}[label=(\roman*)]
\item $t_n/u_n$ is an $\mathbf H_4$-best approximation of $\alpha$ if and only if $\alpha_n > 1$ or $\alpha^*_n > 1$.

\item $v_n / w_n$ is an $\mathbf H_4$-best approximation of $\alpha$ if and only if $\alpha_n < 1$ or $\alpha^*_n < 1$.
\end{enumerate}
\end{lemma}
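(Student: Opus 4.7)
My plan is to transport the problem from the $\alpha$-side to the $\alpha_n$-side via the matrix $G_n$ and read off the answer by enumerating a very short list of candidate competitors. The engine is a direct identity obtained by substituting $\alpha = G_n \cdot \alpha_n$ and using $\det G_n = 1$: writing $p/q = G_n \cdot (r/s)$, one gets $q = u_n r + w_n s$ and $q\alpha - p = (s\alpha_n - r)/(u_n \alpha_n + w_n)$. Setting $(r, s) = (1, 0)$ and $(0, 1)$ recovers $|u_n \alpha - t_n| = 1/(u_n\alpha_n + w_n)$ and $|w_n \alpha - v_n| = \alpha_n/(u_n\alpha_n + w_n)$.

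The first step is to shrink the search. Lemma~\ref{lem3} already shows that any $p/q \notin [v_n/w_n, t_n/u_n]$ satisfies both $|q\alpha - p| > |u_n\alpha - t_n|$ and $|q\alpha - p| > |w_n\alpha - v_n|$, regardless of the size of $q$, so such $p/q$ can never beat $t_n/u_n$ or $v_n/w_n$. I therefore restrict attention to $p/q \in [v_n/w_n, t_n/u_n]$, for which the pair $(r, s) = (w_n p - v_n q,\ t_n q - u_n p)$ is non-negative and is the first column of a matrix in $\mathbf H_4$.

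For part (i), the competitor conditions "$0 < q \le u_n$" and "$|q\alpha - p| \le |u_n\alpha - t_n|$" translate, via the identity, into $r + \alpha_n^* s \le 1$ and $|s\alpha_n - r| \le 1$. Thus $t_n/u_n$ fails to be an $\mathbf H_4$-best approximation precisely when some admissible pair $(r, s) \ne (1, 0)$ with $r, s \ge 0$ satisfies both. The combinatorial heart of the argument is the enumeration: the smallest positive value of either entry of a first column of a matrix in $\mathbf H_4$ is $1$ (among odd integers) or $\sqrt 2$ (among $\sqrt 2 \mathbb Z$), and since $\alpha_n^* > 0$ and $\sqrt 2 > 1$, the linear bound $r + \alpha_n^* s \le 1$ forces $(r, s) \in \{(1, 0), (0, 1)\}$. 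Excluding $(1, 0)$, the unique candidate is $(0, 1) = v_n/w_n$, and it satisfies both inequalities iff $\alpha_n^* \le 1$ and $\alpha_n \le 1$; negating gives (i).

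Part (ii) is entirely symmetric. After swapping the roles of $(1, 0)$ and $(0, 1)$, the competitor conditions become $r/\alpha_n^* + s \le 1$ and $|s\alpha_n - r| \le \alpha_n$; the same enumeration leaves only the candidate $(1, 0) = t_n/u_n$, valid as a counterexample iff $\alpha_n^* \ge 1$ and $\alpha_n \ge 1$, yielding the stated dichotomy. The main obstacle in executing the plan is the enumeration step, which requires juggling the two types of $\mathbf H_4$-matrices (Type I with odd numerator and $\sqrt 2 \mathbb Z$-denominator versus Type II with $\sqrt 2 \mathbb Z$-numerator and odd denominator) to verify that no admissible first column can sneak into the strip $0 < r + \alpha_n^* s \le 1$ beyond the two endpoints.
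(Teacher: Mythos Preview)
Your argument is correct in substance and lands on the same mechanism as the paper: both proofs reduce to showing that the only possible competitor to $t_n/u_n$ among $p/q \in [v_n/w_n,\, t_n/u_n]$ with denominator $\le u_n$ is $v_n/w_n$ itself, after which one simply compares $|u_n\alpha - t_n|$ with $|w_n\alpha - v_n|$. The execution of that uniqueness step differs. The paper invokes Lemma~\ref{lem1} (any $p/q$ strictly between two tangent Ford circles has $q \ge u_n + w_n$), which dispatches all interior points at once without appealing to the explicit arithmetic of $\mathbf H_4$; it then handles the endpoint $v_n/w_n$ by a direct inequality. You instead pull back by $G_n$ and enumerate admissible first columns $(r,s)$ via the Type~I/Type~II description. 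This is a legitimate alternative and yields the same two-element list $\{(1,0),(0,1)\}$.

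There is one small loose end in your enumeration. The ``smallest positive value'' observation correctly rules out pairs with both entries positive (since then $r + \alpha^*_n s \ge \min\{1+\sqrt 2\,\alpha^*_n,\ \sqrt 2\} > 1$) and forces $r = 1$ when $s = 0$; but for $r = 0$ it only yields $s$ odd with $\alpha^*_n s \le 1$, which in principle allows $s \ge 3$ whenever $\alpha^*_n < 1/3$, and $\alpha^*_n = [d_n,\dots,d_1,3^\infty]$ can indeed be that small. One extra line closes this: a first column $(0,s)$ of a Type~II matrix in $\mathbf H_4$ satisfies $-bs = 1$ for some odd $b$, forcing $|s| = 1$. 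With that addition your proof is complete; the paper's route via Lemma~\ref{lem1} simply avoids this case analysis.
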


\begin{proof}
We prove the case (i) only since the proof for the case (ii) is similar.

Suppose that $\alpha^*_n  = \frac{w_n}{u_n}> 1$.
Let us consider $p/q \in\mathbb Q (\mathbf H_4)$ such that $p/q \ne t_n/u_n$ and $q \le u_n$.
Then $p/q \not=v_n/w_n$.
Combining with Lemma~\ref{lem1}, we deduce that $p/q$ is not in the closed interval $[\frac{t_n}{u_n}, \frac{v_n}{w_n}]$. 
By Lemma~\ref{lem3}, we have $|u_n \alpha - t_n| < | q\alpha - p|$.
Therefore, $t_n/u_n$ is a best approximation.

Suppose that $\alpha_n \in (1, \infty)$, which is equivalent to that 
$\alpha \in G_n\cdot(1,\infty)=(\frac{t_n + v_n}{u_n + w_n}, \frac{t_n}{u_n})$.
Let us consider $p/q \in\mathbb Q (\mathbf H_4)$ such that $q \le u_n$ and $p/q \ne t_n/u_n$.
By Lemma~\ref{lem1}, $p/q$ is not in the open interval $( \frac{v_n}{w_n}, \, \frac{t_n}{u_n} )$. 
If $p/q\not=v_n/w_n$,
then by Lemma~\ref{lem3}, 
we have $|u_n\alpha -t_n| < | q \alpha - p|$.
If $p/q=v_n/w_n$, then $\frac{t_n + v_n}{u_n + w_n} < \alpha$
implies that
$$
| u_n \alpha - t_n | = t_n - u_n \alpha  < w_n \alpha - v_n = | q \alpha - p|.
$$
Therefore, $t_n/u_n$ is an $\mathbf H_4$-best approximation.

Suppose that $u_n \ge w_n $ and 
$\alpha \in (\frac{v_n}{w_n}, \frac{t_n + v_n}{u_n + w_n}]$.
Then  
$$
0 < w_n \alpha - v_n \le t_n - u_n \alpha.
$$
Therefore, we deduce that $t_n/u_n$ is not an $\mathbf H_4$-best approximation.
\end{proof}
See Figure~\ref{fig:regions} for the case of $w_n < u_n$.
In this case, $\frac{v_n}{w_n}$ is an $\mathbf H_4$-best approximation for $\alpha \in \big(\frac{v_n}{w_n}, \frac{t_n}{u_n}\big)$.
Moreover, $\frac{t_n}{u_n}$ is an $\mathbf H_4$-best approximation for $\alpha \in \big(\frac{t_n+v_n}{u_n+w_n}, \frac{t_n}{u_n}\big)$.
By Lemma~\ref{lem:BA2} and \ref{lem:BA1}, we have the following proposition.

\begin{figure}
\centering
\begin{tikzpicture}[scale=6]
	\draw[thick] (-.6, 0) -- (1.4, 0);

	\draw[ultra thick] (0,0) arc (-90:10:0.7071); 
	\draw[ultra thick] (0,0) arc (270:170:0.7071); 

	\draw[ultra thick] (1,0) arc (-90:270:0.35355); 

	\draw[ultra thick] (1/2,0) 
    arc (-90:270:0.08839); 
	\draw[ultra thick] (2/3,0) 
    arc (-90:270:0.07857); 

    \draw (0, 0.02) -- +(0, -0.04) node[below] {$\frac{v_n}{w_n}$};
    \draw (1, 0.02) -- +(0, -0.04) node[below] {$\frac{t_n}{u_n}$};
    \draw ({2-sqrt(2)}, 0.02) -- +(0, -0.04) node[below] {$\frac{t_n+v_n}{u_n+w_n}$};
    \draw[<->, blue] (1, -0.18) -- (0, -0.18) node[left] {$\frac{v_n}{w_n}$};
    \draw[<->, red] ({2-sqrt(2)}, -0.26) -- (1, -0.26) node[right] {$\frac{t_n}{u_n}$};
\end{tikzpicture}
\caption{The regions for $\alpha$ such that $\frac{v_n}{w_n}$ and $\frac{t_n}{u_n}$ are $\mathbf H_4$-best approximations for the case $\alpha^*_n = w_n/u_n >1$}
\label{fig:regions}
\end{figure}
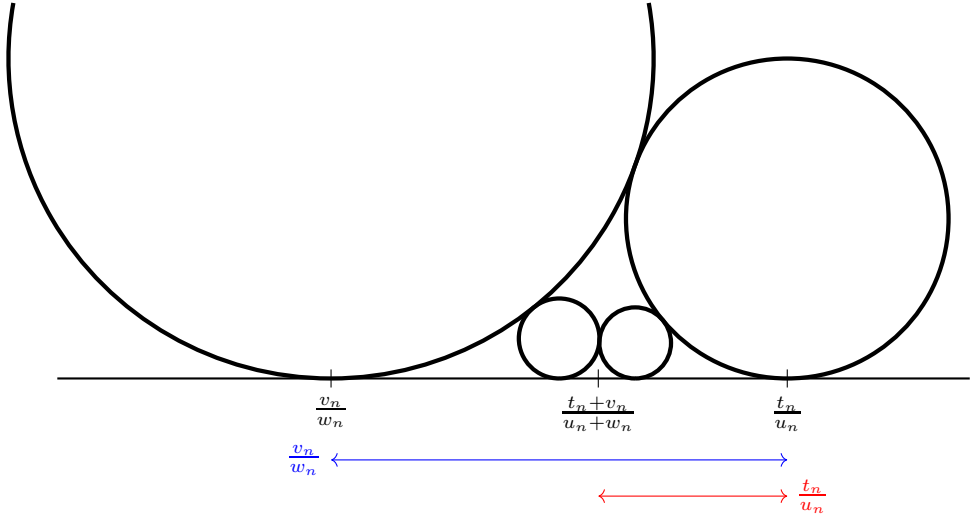

\begin{proposition}\label{prop0}
Let $\alpha = [d_1, d_2, \dots ]$ be a positive real number not in $\mathbb Q (\mathbf H_4)$
and $m=m(\alpha)$, i.e., $\alpha \in (m \sqrt 2, (m+1) \sqrt 2)$.
The set of $\mathbf H_4$-best approximations of $\alpha$ is 
\begin{equation}\label{eq:BA}
\left\{ \frac{t_n}{u_n} \in \mathbb Q (\mathbf H_4) \, | \, \alpha_n > 1 \text{ or } \alpha^*_n > 1, \ n \ge m+1 \right\} 
\cup \left\{ \frac{v_n}{w_n} \in \mathbb Q (\mathbf H_4) \, | \, \alpha_n < 1 \text{ or } \alpha^*_n < 1, \ n \ge m+1  \right\}. 
\end{equation}
\end{proposition}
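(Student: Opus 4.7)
The plan is straightforward: this proposition is essentially a bookkeeping consequence of Lemmas~\ref{lem:BA2} and \ref{lem:BA1}, which together do all the real work.

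First I would verify the parenthetical remark identifying $m=m(\alpha)$ with the interval $(m\sqrt 2,(m+1)\sqrt 2)$. Since $d_1=\cdots=d_m=3$, we have $G_m=A_3^m=\begin{pmatrix}1& m\sqrt 2\\ 0&1\end{pmatrix}$, so $\alpha=G_m\cdot\alpha_m=\alpha_m+m\sqrt 2$. Because $d_{m+1}\in\{1,2\}$, the tail $\alpha_m$ lies in $A_{d_{m+1}}\cdot(0,\infty)\subset(0,\sqrt 2)$, which places $\alpha$ in $(m\sqrt 2,(m+1)\sqrt 2)$.

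Next, I would argue the inclusion ``best approximations are contained in \eqref{eq:BA}''. By Lemma~\ref{lem:BA2}, any $\mathbf H_4$-best approximation $p/q$ must coincide with $t_n/u_n$ or $v_n/w_n$ for some $n\ge m+1$. Lemma~\ref{lem:BA1}(i) then says $t_n/u_n$ is actually a best approximation if and only if $\alpha_n>1$ or $\alpha_n^*>1$; Lemma~\ref{lem:BA1}(ii) gives the analogous criterion $\alpha_n<1$ or $\alpha_n^*<1$ for $v_n/w_n$. Taking the union yields \eqref{eq:BA} as an upper bound for the best approximation set.

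Conversely, for every $n\ge m+1$ satisfying one of the listed inequalities, Lemma~\ref{lem:BA1} asserts that the corresponding $t_n/u_n$ or $v_n/w_n$ is in fact a best approximation, giving the reverse inclusion. Finally, the dichotomies ``$\alpha_n>1$ or $\alpha_n<1$'' and ``$\alpha_n^*>1$ or $\alpha_n^*<1$'' are genuine dichotomies: $\alpha\notin\mathbb Q(\mathbf H_4)$ forces $\alpha_n\ne 1$, and the remark after the definition of $\alpha_n^*$ records that $\alpha_n^*\ne 1$. Thus no case is lost and \eqref{eq:BA} is an equality. Since the technical content lives entirely in the preceding lemmas, the only risk here is a notational slip in combining the two characterizations; there is no substantive obstacle.
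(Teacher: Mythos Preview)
Your approach is exactly the paper's: the proposition is stated immediately after Lemmas~\ref{lem:BA2} and~\ref{lem:BA1} with no further proof, and your write-up simply spells out that combination. One small correction: the claim that $\alpha\notin\mathbb Q(\mathbf H_4)$ forces $\alpha_n\ne 1$ is false (take $\alpha=1=[2^\infty]$, which is not in $\sqrt 2\,\mathbb Q$, yet $\alpha_n=1$ for every $n$), but this claim is also unnecessary---the ``if and only if'' in Lemma~\ref{lem:BA1} already gives both inclusions without any dichotomy on $\alpha_n$, so you can simply drop that last paragraph.
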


\section{Proof of Theorem \ref{thm:DS}}\label{sec:DS}

One can rearrange the elements of the set of $\mathbf H_4$-best approximations in \eqref{eq:BA} in order of the denominators. 
We denote by $\left\{ \frac{p_i}{q_i} \right\}_{i=1}^\infty$ the sequence of $\mathbf H_4$-best approximations of $\alpha$, arranged in order of the denominators, i.e., $q_{i} < q_{i+1}$ for all $i \in \mathbb N$.
The following lemma describes expressions of $p_i/q_i$ in terms of $t_n/u_n$ and $v_n/w_n$.

\begin{lemma}\label{llll}
Let $\frac{p_i}{q_i}$ be an $\mathbf H_4$-best approximations of $\alpha$ 
satisfying that 
$\frac{p_i}{q_i} = \frac{t_n}{u_n} \ne \frac{t_{n+1}}{u_{n+1}}$,
$n \ge m(\alpha)+1$.
Then we have 
$$
\frac{p_{i+1}}{q_{i+1}} = 
\begin{cases}
\dfrac{v_n}{w_n}, & \text{ if } \ \alpha^*_n >1, \ \alpha_n < 1, \\
\dfrac{t_{n+1}}{u_{n+1}}, & \text{ if } \ \alpha^*_n >1, \ \alpha_n \in [1, \sqrt 2), \\
\dfrac{v_{n+1}}{w_{n+1}}, & \text{ if } \ \alpha^*_n <1, \ \alpha_n \in (1, \sqrt 2).
\end{cases} 
$$
Similarly, if
$\frac{p_i}{q_i} = \frac{t_n}{u_n} \ne \frac{t_{n+1}}{u_{n+1}}$,
$n \ge m+1$,
then we have
$$
\frac{p_{i+1}}{q_{i+1}} = 
\begin{cases}
\dfrac{t_n}{u_n}, & \text{ if } \ \alpha^*_n < 1, \ \alpha_n > 1, \\
\dfrac{v_{n+1}}{w_{n+1}}, & \text{ if } \ \alpha^*_n < 1, \ \alpha_n \in \Big(\frac{1}{\sqrt2}, 1 \Big], \\
\dfrac{t_{n+1}}{u_{n+1}}, & \text{ if } \ \alpha^*_n > 1, \ \alpha_n \in \Big(\frac{1}{\sqrt2}, 1 \Big).
\end{cases} 
$$
\end{lemma}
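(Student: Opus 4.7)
The plan is to use Proposition~\ref{prop0}: every $\mathbf H_4$-best approximation is of the form $t_k/u_k$ or $v_k/w_k$ for some $k \ge m+1$, with the membership governed by Lemma~\ref{lem:BA1}. Given that $p_i/q_i = t_n/u_n$, I would enumerate all such candidates in increasing order of denominator (starting from $u_n$), and for each determine using the recurrences~\eqref{uw} whether it can be the successor. The two interpretive keys are that $\alpha_n^* = w_n/u_n$, so $\alpha_n^* \gtrless 1$ is the same as $w_n \gtrless u_n$, and that $\alpha_n \in A_{d_{n+1}}\cdot[0,\infty]$, so the position of $\alpha_n$ relative to $1/\sqrt 2$ and $\sqrt 2$ determines $d_{n+1}$; the hypothesis $t_n/u_n \ne t_{n+1}/u_{n+1}$ simply rules out $d_{n+1}=3$.

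I would then carry out the three cases in parallel. In case $\alpha_n^*>1$, $\alpha_n<1$, the inequality $w_n>u_n$ makes $v_n/w_n$ the unique candidate whose denominator lies in $(u_n, u_{n+1})$, and $\alpha_n<1$ certifies it as a best approximation via Lemma~\ref{lem:BA1}(ii). In case $\alpha_n^*>1$, $\alpha_n\in[1,\sqrt 2)$, the same $v_n/w_n$ still has the next smallest denominator but now fails both clauses of Lemma~\ref{lem:BA1}(ii) and must be skipped; since $d_{n+1}=2$, the recurrence gives $u_{n+1}=\sqrt 2 u_n + w_n < u_n + \sqrt 2 w_n = w_{n+1}$, hence $\alpha_{n+1}^*>1$, so $t_{n+1}/u_{n+1}$ is a best approximation by Lemma~\ref{lem:BA1}(i). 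In the remaining case $\alpha_n^*<1$, $\alpha_n\in(1,\sqrt 2)$, the denominator $w_n$ is smaller than $u_n$ so $v_n/w_n$ is already below the threshold; again $d_{n+1}=2$, but now $u_n>w_n$ flips the inequality to $u_{n+1}>w_{n+1}$, so it is $v_{n+1}/w_{n+1}$ that appears first, and $\alpha_{n+1}^*<1$ certifies it via Lemma~\ref{lem:BA1}(ii).

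The main obstacle is ruling out that some $t_k/u_k$ or $v_k/w_k$ with $k\ge n+2$ could sneak in with a denominator smaller than the claimed $p_{i+1}/q_{i+1}$; but the recurrences~\eqref{uw} show that both $\min\{u_k,w_k\}$ and $\max\{u_k,w_k\}$ are non-decreasing in $k$, so once the denominator axis has passed $u_{n+1}$ or $w_{n+1}$ nothing further-indexed can undercut it. The second half of the statement, whose hypothesis reads $p_i/q_i = t_n/u_n$ but whose conclusions only make sense for $p_i/q_i = v_n/w_n$, is handled by the dual argument obtained by swapping the pairs $(t,u)\leftrightarrow(v,w)$ and the inequalities $>$ and $<$; formally, this symmetry is implemented by the involution $G_n \mapsto J H G_n H J$ using~\eqref{eq:HJ}--\eqref{MJ}, which interchanges the two families of best approximations and reverses the comparison with $1$.
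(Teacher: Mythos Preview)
Your approach is essentially the same as the paper's: both use Proposition~\ref{prop0} to reduce to the candidates $t_k/u_k$ and $v_k/w_k$, split into the three cases according to the signs of $\alpha_n^*-1$ and the position of $\alpha_n$, and use the recurrences~\eqref{uw} (together with $d_{n+1}\in\{1,2\}$) to order the denominators and to determine $\alpha_{n+1}^*$. You are actually more explicit than the paper in addressing why no $t_k/u_k$ or $v_k/w_k$ with $k\ge n+2$ can undercut the claimed successor; the paper simply records the chain $u_n<w_n\le\min\{u_{n+1},w_{n+1}\}$ (case~(i)) or $u_n<w_n<u_{n+1}<w_{n+1}$ (cases~(ii),(iii)) and concludes.

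One point both you and the paper leave implicit is ruling out that some $v_k/w_k$ with $k<n$ is a best approximation whose denominator falls strictly between $q_i=u_n$ and the claimed $q_{i+1}$. This is easy to close: if $k<n$ and $w_k>u_n$, then $w_k>u_n\ge u_k$ gives $\alpha_k^*>1$, so by Lemma~\ref{lem:BA1}(ii) one would need $\alpha_k<1$; but then $d_{k+1}\in\{1,2\}$ and~\eqref{uw} forces $u_{k+1}\ge w_k>u_n$, contradicting $k+1\le n$. Your remark that ``the unique candidate whose denominator lies in $(u_n,u_{n+1})$'' is $v_n/w_n$ is therefore true for \emph{best-approximation} candidates, though not for the raw list of all $v_k/w_k$; it may be worth making this one-line check explicit. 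Your identification of the typo in the second half (the hypothesis should read $p_i/q_i=v_n/w_n$) and your treatment of it via the $(t,u)\leftrightarrow(v,w)$ duality match the paper's ``the case of $\frac{p_i}{q_i}=\frac{v_n}{w_n}$ is symmetric.''
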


\begin{proof}
Suppose that $\frac{p_i}{q_i} = \frac{t_n}{u_n}$, $n \ge m+1$.
We may assume that $\frac{t_n}{u_n} > \frac{t_{n+1}}{u_{n+1}}$.
Then $d_{n+1} \in \{ 1,2\}$ and we have $\alpha_n = [d_{n+1}, d_{n+2}, \dots ] < \sqrt 2$. 
Thus Proposition~\ref{prop0} implies that there are three cases as follows: 

(i) If $\alpha^*_n > 1$ and $\alpha_n < 1$, then $d_{n+1} \in \{ 2, 3\}$, thus by \eqref{uw}, we have 
$$
u_n < w_n \le \min \{ u_{n+1} , w_{n+1} \}
$$
by Proposition~\ref{prop0}, $\frac{v_n}{w_n}$ is an $\mathbf H_4$-best approximation. 
Therefore, we deudce   
$\frac{p_{i+1}}{q_{i+1}} = \frac{v_n}{w_n}$.

(ii) If $\alpha^*_n > 1$ and $\alpha_n \in [1, \sqrt 2)$, then 
$d_{n+1} = 2$, thus by \eqref{uw}, we have 
$$
u_n < w_n < u_{n+1} < w_{n+1} \quad \text{ and } \quad \alpha^*_{n+1} = \frac{w_{n+1}}{u_{n+1}}> 1.
$$
By Proposition~\ref{prop0}, $\frac{v_n}{w_n}$ is not an $\mathbf H_4$-best approximation and $ \frac{p_{i+1}}{q_{i+1}} = \frac{t_{n+1}}{u_{n+1}}$.

(iii) If $\alpha^*_n < 1$ and $\alpha_n \in (1, \sqrt 2)$, then 
$d_{n+1} = 2$, thus by \eqref{uw}, we have 
$$
w_n < u_n < w_{n+1} < u_{n+1} \quad \text{ and } \quad \alpha^*_{n+1} = \frac{w_{n+1}}{u_{n+1}} < 1.
$$
By Proposition~\ref{prop0}, the next $\mathbf H_4$-best approximation is  
$\frac{p_{i+1}}{q_{i+1}} = \frac{v_{n+1}}{w_{n+1}}$.

The case of $\frac{p_i}{q_i} = \frac{v_n}{w_n}$ is symmetric.
\end{proof}

\begin{proposition}\label{Ubound}
We have 
$$
\frac 12  < q_{i+1}|q_{i} \alpha - p_{i} | < \frac{\sqrt 2 + 1}{2}.
$$
\end{proposition}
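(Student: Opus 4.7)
The plan is to combine the Möbius identity $\alpha = G_n \cdot \alpha_n$ with $\det G_n = 1$, which gives
\begin{equation*}
|u_n\alpha - t_n| = \frac{1}{u_n\alpha_n + w_n}, \qquad |w_n\alpha - v_n| = \frac{\alpha_n}{u_n\alpha_n + w_n},
\end{equation*}
and then to apply Lemma~\ref{llll} to express $q_{i+1}|q_i\alpha - p_i|$ explicitly in each of its six cases. Setting $\beta = \alpha_n$ and $\gamma = \alpha_n^* = w_n/u_n$ and dividing numerator and denominator by $u_n$, the quantity becomes a simple rational function of $\beta$ and $\gamma$ alone.

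A direct substitution using the recursion \eqref{uw}---noting that the step $n\to n+1$, whenever it appears in Lemma~\ref{llll}, uses the digit $d_{n+1}=2$, since $\alpha_n\in(1/\sqrt 2,\sqrt 2)$ in those cases---yields, for the three subcases with $p_i/q_i = t_n/u_n$, the expressions
\begin{equation*}
\frac{\gamma}{\beta+\gamma}, \qquad \frac{\sqrt 2+\gamma}{\beta+\gamma}, \qquad \frac{1+\sqrt 2\,\gamma}{\beta+\gamma},
\end{equation*}
and, by the same computation, for the three subcases with $p_i/q_i = v_n/w_n$,
\begin{equation*}
\frac{\beta}{\beta+\gamma}, \qquad \frac{\beta(1+\sqrt 2\,\gamma)}{\beta+\gamma}, \qquad \frac{\beta(\sqrt 2+\gamma)}{\beta+\gamma}.
\end{equation*}
For each expression, with the ranges of $\beta$ and $\gamma$ dictated by Lemma~\ref{llll}, both inequalities $>1/2$ and $<(\sqrt 2+1)/2$ reduce to elementary linear inequalities in $\beta$ and $\gamma$.

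The main obstacle is not a hard computation but the delicacy of the upper bound near the boundary of the prescribed regions. For example, in case (ii) (where $\gamma>1$ and $\beta\in[1,\sqrt 2)$) the upper bound simplifies to
\begin{equation*}
2\sqrt 2 < (\sqrt 2+1)\beta + (\sqrt 2-1)\gamma,
\end{equation*}
and both sides equal $2\sqrt 2$ at $\beta=\gamma=1$; strictness is recovered because the lemma forces $\gamma>1$ strictly together with $\beta\ge 1$. Analogous limiting configurations in cases (iii), (ii') and (iii') all evaluate exactly to $(\sqrt 2+1)/2$, incidentally confirming that this constant is the optimal supremum; the lower bound $1/2$ is similarly sharp, attained in the limit only in cases (i) and (i'). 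Apart from this bookkeeping across six cases---which is halved by the $t\leftrightarrow v,\ u\leftrightarrow w$ symmetry---every step is routine algebra.
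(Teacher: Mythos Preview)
Your proposal is correct and follows essentially the same route as the paper: both compute $|u_n\alpha-t_n|$ and $|w_n\alpha-v_n|$ from the identity $\alpha=G_n\cdot\alpha_n$, invoke Lemma~\ref{llll} to identify $q_{i+1}$, observe that $d_{n+1}=2$ in the four cases where the step $n\to n+1$ appears, and obtain exactly the rational expressions $\dfrac{\gamma}{\beta+\gamma}$, $\dfrac{\sqrt 2+\gamma}{\beta+\gamma}$, $\dfrac{1+\sqrt 2\gamma}{\beta+\gamma}$ (and their $J$-symmetric counterparts), then check the bounds by elementary inequalities. Your remark on the boundary case $\beta=\gamma=1$ in case~(ii) matches the paper's sharpness observation in Example~\ref{example:sup}.
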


\begin{proof}
Since $\alpha=G_n\cdot \alpha_n$ as in  \eqref{Galphan}, we have
\begin{equation}\label{bound*}
|u_n \alpha - t_n | 
= \left| \frac{u_n(t_n \alpha_{n} + v_n)}{u_n \alpha_n + w_n} - t_n \right|
= \frac{|u_n v_n - w_n t_n|}{u_n \alpha_{n} + w_n}
= \frac{1}{u_n \alpha_n + w_n}. 
\end{equation}
We consider the case $\frac{p_i}{q_i} = \frac{t_n}{u_n} \ne \frac{t_{n+1}}{u_{n+1}}$.
Using Lemma~\ref{llll}, we distinguish three cases: \\ 
(i) If $\frac{p_{i+1}}{q_{i+1}} = \frac{v_n}{w_n}$, then $\alpha^*_n > 1$ and $\alpha_n < 1$.
Then we have
\begin{equation*}\label{b1}
q_{i+1}|q_{i} \alpha - p_{i} | = w_n |u_n \alpha - t_n | 
= \frac{w_n}{u_n \alpha_n + w_n} = \frac{\alpha^*_n}{ \alpha_n + \alpha^*_n },
\end{equation*}
thus
$$
\frac{1}{2} < q_{i+1}|q_{i} \alpha - p_{i} | < 1.
$$
(ii) If $\frac{p_{i+1}}{q_{i+1}} = \frac{t_{n+1}}{u_{n+1}}$, then $\alpha^*_n > 1$ and $\alpha_n \in [1,\sqrt 2)$. 
Then we have $d_{n+1} = 2$, which implies
\begin{equation}\label{b2}
q_{i+1}|q_{i} \alpha - p_{i} | = u_{n+1}|u_n \alpha - t_n | 
= \frac{\sqrt 2 u_n + w_n}{u_n \alpha_n + w_n} 
= \frac{\sqrt 2 + \alpha^*_n}{\alpha_n + \alpha^*_n}.
\end{equation}
Thus,
$$
1 < q_{i+1}|q_{i} \alpha - p_{i} | < \frac{\sqrt 2 + 1}{2}.
$$
(iii) If $\frac{p_{i+1}}{q_{i+1}} = \frac{v_{n+1}}{w_{n+1}}$, then $\alpha^*_n < 1$ and $\alpha_n \in (1,\sqrt 2)$.
Thus we have $d_{n+1} = 2$, which implies
\begin{equation*}\label{b3}
q_{i+1}|q_{i} \alpha - p_{i} | = w_{n+1} \left| u_n \alpha - t_n \right| 
= \frac{u_n + \sqrt 2 w_n}{u_n \alpha_n + w_n} 
= \frac{1 + \sqrt 2 \alpha^*_n}{\alpha_n + \alpha^*_n}.
\end{equation*}
Thus
$$
\frac 1{\sqrt 2} < q_{i+1}|q_{i} \alpha - p_{i} | < \frac{\sqrt 2 + 1}{2}.
$$

The proof for the case $\frac{p_i}{q_i} = \frac{v_n}{w_n}$ is symmetric.   
\end{proof}

The proof of the following lemma is similar to the proof of the classical Diophantine approximation property on $\mathbf H_3$.

\begin{lemma}\label{lem:UA}
We have 
$$
\min_{\substack{p/q \in \mathbb Q (\mathbf H_4) \\ 1 \le q < q_{i}}} | q \alpha - p | = | q_{i-1} \alpha - p_{i-1} |. 
$$
\end{lemma}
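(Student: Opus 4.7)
The plan is to split the claim into the two inequalities. The direction $\le$ is immediate: the pair $(p_{i-1}, q_{i-1})$ is admissible in the minimum since $q_{i-1} < q_i$, so the minimum is at most $|q_{i-1}\alpha - p_{i-1}|$.

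For the reverse direction I would argue by contradiction. Suppose some $p/q \in \mathbb Q(\mathbf H_4)$ with $1 \le q < q_i$ satisfies $|q\alpha - p| < |q_{i-1}\alpha - p_{i-1}|$. Among such counterexamples I would pick one with $q$ minimal, and for that $q$ pick the unique $p$ minimizing $|q\alpha - p|$ over $\mathbf H_4$-rational numerators with this denominator. Uniqueness of this minimizer uses that $\alpha$ is irrational: if $p/q$ and $p'/q$ are two distinct $\mathbf H_4$-rationals equidistant from $\alpha$, then $\alpha = (p+p')/(2q) \in \mathbb Q$, contradicting $\alpha \notin \mathbb Q(\mathbf H_4)$.

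The core step is then to show this chosen $p/q$ would itself be an $\mathbf H_4$-best approximation of $\alpha$ with $q_{i-1} < q < q_i$, giving a contradiction with the enumeration $q_1 < q_2 < \cdots$. First, $q > q_{i-1}$ because otherwise the best approximation property of $p_{i-1}/q_{i-1}$ forces either $|q\alpha - p| > |q_{i-1}\alpha - p_{i-1}|$ (when $p/q \ne p_{i-1}/q_{i-1}$) or equality (when they coincide), both incompatible with our assumption. Next, to verify the best approximation property at $p/q$, let $r/s \in \mathbb Q(\mathbf H_4)$ with $r/s \ne p/q$ and $s \le q$. If $s < q$, the minimality of $q$ yields $|s\alpha - r| \ge |q_{i-1}\alpha - p_{i-1}| > |q\alpha - p|$; if $s = q$ with $r \ne p$, the unique choice of $p$ gives $|q\alpha - r| > |q\alpha - p|$. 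Hence $p/q$ is a best approximation with denominator strictly between $q_{i-1}$ and $q_i$, the desired contradiction.

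The only subtle point is the uniqueness of the nearest $\mathbf H_4$-rational for fixed denominator, which rests on $\alpha \notin \mathbb Q$. Beyond that, the argument proceeds purely from the definition of best approximation and the ordering convention on $(q_i)$, paralleling the classical record-of-the-minimum argument for $\mathbf H_3$ that the authors indicate.
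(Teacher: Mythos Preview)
Your proof follows the same contradiction strategy as the paper's: pick a counterexample with $q$ minimal, deduce $q_{i-1} < q < q_i$, and conclude that $p/q$ is an $\mathbf H_4$-best approximation missing from the enumeration; you even supply the $s=q$ case that the paper leaves implicit. One small correction in your uniqueness step: for a fixed admissible $q$ the numerators $p$ with $p/q \in \mathbb Q(\mathbf H_4)$ lie either in $\sqrt{2}\,\mathbb Z$ (when $q$ is an odd integer) or among the odd integers (when $q \in \sqrt{2}\,\mathbb Z$), so the midpoint $(p+p')/(2q)$ lands in $\sqrt{2}\,\mathbb Q = \mathbb Q(\mathbf H_4)$ rather than in $\mathbb Q$---which is precisely the contradiction you want with $\alpha \notin \mathbb Q(\mathbf H_4)$.
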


\begin{proof}
Suppose that there exists $p/q \in \mathbb Q (\mathbf H_4)$ such that $1 \le q < q_i$ and $| q\alpha - p | < | q_{i-1} \alpha - p_{i-1} |$.
Since $p_{i-1}/q_{i-1}$ is an $\mathbf H_4$-best approximation, we have $q_{i-1}<q$ 
and $q$ is minimal with these properties.   
Then $p/q$ is an $\mathbf H_4$-best approximation, which contradicts that $\{ p_i/q_i\}$ is the set of $\mathbf H_4$-best approximations.
\end{proof}

\begin{proof}[Proof of Theorem~\ref{thm:DS}]
Let $N$ be a real number.
Choose $i\in \mathbb N$ as $q_{i-1} \le N < q_i$.
Then, by Lemma~\ref{lem:UA} and Proposition~\ref{Ubound}, we have
$$
\min_{\substack{p/q \in \mathbb Q (\mathbf H_4) \\ 1 \le q \le N}} | q \alpha - p | = | q_{i-1} \alpha - p_{i-1} | < \frac{\sqrt 2 + 1}{2} \cdot \frac{1}{q_i} < \frac{\sqrt 2 + 1 }{2} \cdot \frac 1N.
$$
Therefore, \eqref{thmua} is satisfied with $p/q = p_{i-1}/q_{i-1}$.
\end{proof}

The proof of the following lemma is similar to the classical Diophantine approximation on $\mathbf H_3$.
Recall that $K(\alpha)$ is defined in \eqref{DTSRC}.

\begin{lemma}\label{lem:K}
We have
$$
K(\alpha) = \limsup_{i \to \infty} q_i | q_{i-1} \alpha - p_{i-1}|.
$$
\end{lemma}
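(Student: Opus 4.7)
The plan is to sandwich $K(\alpha)$ between $L := \limsup_{i \to \infty} q_i |q_{i-1}\alpha - p_{i-1}|$ from above and below, using Lemma~\ref{lem:UA} in both directions. That lemma pins down $\min_{1 \le q < q_i} |q\alpha - p| = |q_{i-1}\alpha - p_{i-1}|$, and hence also $\min_{1 \le q \le N} |q\alpha - p| = |q_{i-1}\alpha - p_{i-1}|$ whenever $q_{i-1} \le N < q_i$.

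First I would show $K(\alpha) \le L$. Fix $\epsilon > 0$ and pick $i_0$ so that $q_i|q_{i-1}\alpha - p_{i-1}| < L + \epsilon$ for every $i > i_0$. For all sufficiently large $N$, let $i$ be the unique index with $q_{i-1} \le N < q_i$. Then the pair $(p_{i-1}, q_{i-1})$ satisfies $1 \le q_{i-1} \le N$ and
\begin{equation*}
|q_{i-1}\alpha - p_{i-1}| < \frac{L+\epsilon}{q_i} \le \frac{L+\epsilon}{N},
\end{equation*}
so it is a non-trivial solution of \eqref{DTSRC} with constant $L + \epsilon$. Hence $K(\alpha) \le L + \epsilon$, and letting $\epsilon \to 0$ gives $K(\alpha) \le L$.

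For the reverse inequality $L \le K(\alpha)$, fix any $K > K(\alpha)$. By definition, for every sufficiently large $N$ there exists $p/q \in \mathbb Q(\mathbf H_4)$ with $1 \le q \le N$ and $|q\alpha - p| < K/N$. Given a large index $i$, choose $N \in [q_{i-1}, q_i)$; the constraint $q \le N < q_i$ combined with Lemma~\ref{lem:UA} yields $|q_{i-1}\alpha - p_{i-1}| \le |q\alpha - p| < K/N$, so $q_i |q_{i-1}\alpha - p_{i-1}| \le K \cdot q_i/N$. Letting $N \nearrow q_i$ gives $q_i|q_{i-1}\alpha - p_{i-1}| \le K$ for all large $i$; hence $L \le K$, and taking $K \searrow K(\alpha)$ finishes the proof.

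The only mildly delicate step is the passage $N \to q_i^-$. If $N$ is assumed real in \eqref{DTSRC} there is nothing to do, while if $N$ is restricted to integers one takes $N = \lfloor q_i \rfloor$ when $q_i \notin \mathbb Z$ (possible because $q_i$ may lie in $\sqrt 2\,\mathbb Z$) or $N = q_i - 1$ when $q_i \in \mathbb Z$, and notes that $q_i/N \to 1$ as $i \to \infty$; the limiting bound $q_i|q_{i-1}\alpha - p_{i-1}| \le K$ is unaffected. No other subtlety arises.
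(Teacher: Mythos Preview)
Your proof is correct and follows essentially the same approach as the paper: both directions rest on Lemma~\ref{lem:UA}, and the upper bound $K(\alpha)\le L$ is argued identically. For the lower bound the paper works contrapositively (for $K=S-\epsilon$ it exhibits arbitrarily large $N$ near $q_i$ with no solution), whereas you fix $K>K(\alpha)$ and bound each $q_i|q_{i-1}\alpha-p_{i-1}|$; these are the same argument in dual form. Your limiting step $N\nearrow q_i$ is in fact handled more carefully than in the paper, where the displayed choice of $N$ contains a typo; your added remark about integer versus real $N$ is unnecessary under the paper's conventions but does no harm.
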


\begin{proof}
Let 
$$
S= \limsup_{i \to \infty} q_i | q_{i-1} \alpha - p_{i-1}|.
$$
Then, by Lemma~\ref{lem:UA}, for any $\epsilon >0$, 
for a sufficiently large $N$ with $q_{i-1} \le N < q_i$,
we have
$$
\min_{\substack{p/q \in \mathbb Q (\mathbf H_4) \\ 1 \le q \le N}} | q \alpha - p | = | q_{i-1} \alpha - p_{i-1} | < \frac{S +\epsilon}{q_i} < \frac{S+\epsilon}{N}.
$$
Therefore, \eqref{DTSRC} is satisfied for $p/q = p_{i-1}/q_{i-1}$ with $K = S + \epsilon$.
Hence, $K(\alpha) \le S$.

On the other hand, for any $\epsilon>0$,
there exist arbitrary large $i$'s such that
$$
q_i | q_{i-1} \alpha - p_{i-1}| > S - \epsilon.
$$
Choose $N = q_{i-1}$. 
Then, by Lemma~\ref{lem:UA},
$$
\min_{\substack{p/q \in \mathbb Q (\mathbf H_4) \\ 1 \le q \le N}} | q \alpha - p | = | q_{i-1} \alpha - p_{i-1} | > \frac{S-\epsilon}{q_i} = \frac{S-\epsilon}{N}.
$$
Therefore, \eqref{DTSRC} has no solution for $K = S -\epsilon$,
which implies  
$K(\alpha) \ge S$.
Hence, $K(\alpha) = S$.
\end{proof}

\begin{example}\label{example:sup}
(1) Let $\alpha = 1  = [ 2, 2, 2, \dots ]$. 
Then $G_n = A_2^n$ and $\alpha_{n} = [2,2,2,\dots] 
= 1$ for all $n \ge 1$.
Since $\alpha^*_n = [2,2,\dots, 2, 3^\infty] >1$ and $\alpha^*_n \to 1$ as $n \to \infty$, 
by \eqref{b2}, we have
$$
\lim_{i \to \infty}  q_{i+1}|q_{i} \alpha - p_{i} |
= \lim_{n \to \infty} u_{n+1} | u_n \alpha - t_n|
=\lim_{n \to \infty} \frac{\sqrt 2 u_n + w_n}{u_n + w_n} = \lim_{n \to \infty} \frac{\sqrt 2 + \alpha^*_n}{1+\alpha^*_n} = \frac{\sqrt 2 + 1}{2}.
$$
By Lemma~\ref{lem:K},
$$
K(\alpha) = \frac{\sqrt 2 + 1}{2}.
$$


(2) 
For 
$$
\alpha = \frac{3 + \sqrt{17}}{2\sqrt{2}}= [(323121)^\infty] = [3,2,3,1,2,1,3, \dots ],
$$
we have $m = 1$ (which is defined in Lemma~\ref{lem:BA2}) and $\alpha_2, \alpha_6>1$, $\alpha_3, \alpha_4, \alpha_5 < 1$, $\alpha^*_2, \alpha^*_3> 1$ and $\alpha^*_4, \alpha^*_5, \alpha^*_6< 1$. We have $\alpha_{i}=\alpha_{i+6}$ and $\alpha^*_{i}=\alpha^*_{i+6}$.
Since 
\begin{gather*}
G_2 = 
\begin{pmatrix} 2\sqrt 2 & 3  \\ 1 & \sqrt 2 \end{pmatrix}, \quad
G_3 = 
\begin{pmatrix} 2\sqrt 2 & 7  \\ 1 & 2\sqrt 2 \end{pmatrix}, \quad
G_4 
= \begin{pmatrix} 9\sqrt 2 & 7  \\ 5 & 2\sqrt 2 \end{pmatrix}, \\  
G_5 = \begin{pmatrix} 25 & 16\sqrt 2 \\ 9\sqrt 2 & 9 \end{pmatrix}, \qquad 
G_6 = \begin{pmatrix} 57 & 16\sqrt 2 \\ 16\sqrt 2 & 9 \end{pmatrix},
\end{gather*} 
we have the sequence of $\mathbf H_4$-best approximations as
$$
\frac{t_2}{u_2} = \frac{t_3}{u_3} = \frac{2\sqrt 2}{1}, \quad  \frac{v_3}{w_3} = \frac{v_4}{w_4} = \frac{7}{2\sqrt 2}, \quad  \frac{v_5}{w_5} =\frac{v_6}{w_6} = \frac{16\sqrt 2}{9}, \quad \frac{t_6}{u_6} =\frac{57}{16\sqrt 2}, \ \dots. 
$$
\end{example}

\section{Rosen and dual Rosen continued fraction expansion}\label{sec:RCF}

In this section, we discuss the relations between the $\mathbf H_4$-best approximations and the convergents of Rosen and dual Rosen continued fractions.

Let us define the Rosen continued fraction map $f : \left( \sqrt 2,\infty \right) \setminus \mathbb Q(\mathbf H_4) \to \left(\sqrt 2,\infty \right) \setminus \mathbb Q(\mathbf H_4)$ by
\begin{equation}\label{exp_R}
f(x) = \frac{1}{|x-a_0 \sqrt{2}|} \quad \text{ where $a_0(x)$ satisfies }  \  a_0 \sqrt 2 - \frac{1}{\sqrt 2} < x < a_0 \sqrt 2 + \frac{1}{\sqrt 2}
\end{equation}
and let $\epsilon_1(x) \in \{ -1, +1\}$ be the sign of $x - a_0(x) \sqrt 2$.
Define $a_i = a_0(f^i(x))$ and $\epsilon_{i+1} = \epsilon_1(f^i(x))$ for $i \ge 1$.
Then the Rosen continued fraction of $x$ is given by
$$
x = \left \llbracket a_0 ; \epsilon_1 / a_1,  \epsilon_2/a_2, \dots \right \rrbracket \quad \text{ with } \ a_i+\epsilon_{i+1}\ge 1 \text{ for } i\ge0.
$$
We note that 
$f \left( \llbracket a_0 ; \epsilon_1 / a_1,  \epsilon_2/a_2, \dots  \rrbracket \right) = 
\left \llbracket a_1 ; \epsilon_2 / a_2,  \epsilon_3/a_3, \dots \right \rrbracket.$
See also \cites{BKS00, DKS09} for the Rosen continued fraction map on the interval which is conjugate of $f$. 

The expansion of the Rosen continued fraction is given by composition of the matrices $A_1J$, $A_2$, $A_3$ corresponding to the linear fractional maps
\begin{equation}\label{eq:AB}
A_1J \cdot \alpha = JA_3 \cdot \alpha = \frac{1}{\sqrt 2 + \alpha}, \qquad
A_2 \cdot \alpha = \sqrt 2 - \frac{1}{\sqrt 2 + \alpha}, \qquad
A_3 \cdot \alpha = \sqrt 2 + \alpha.
\end{equation}
where $J=\left( \begin{smallmatrix}0&1\\1&0\end{smallmatrix} \right)$ defined as in \eqref{eq:HJ}.
We modify $G_n$ to express it as a product of $A_1J$, $A_2$, $A_3$.
Let 
\begin{equation}\label{DefM}
M_n  
= 
\begin{cases} G_n, &\text{ if } \ \alpha^*_n > 1, \\ 
G_nJ, &\text{ if } \ \alpha^*_n < 1, 
\end{cases}
\quad \text{ and } \quad
\alpha'_n = \begin{cases}
\alpha_n, & \text{ if } \alpha^*_n > 1, \\
J \cdot \alpha_n, & \text{ if } \alpha^*_n < 1.
\end{cases}
\end{equation}
Thus, $\alpha = M_n \cdot \alpha'_n$.
Note that $M_n\cdot\infty$ chooses the one with the smaller denominator between $t_n/u_n$ and $v_n/w_n$.
We have the following recurrence relation.

\begin{lemma}\label{RCF}
For each $n \ge 0$, we have 
\begin{equation}\label{eq:RCF}
\alpha'_{n} = \begin{cases}
A_1J \cdot \alpha'_{n+1}, & \text{ if } \ \alpha'_{n} \in \big(0, \frac{1}{\sqrt 2}\big), \\
A_2 \cdot \alpha'_{n+1},& \text{ if } \ \alpha'_{n} \in \big(\frac{1}{\sqrt 2}, \sqrt 2\big), \\
A_3 \cdot \alpha'_{n+1},& \text{ if } \ \alpha'_{n} \in \big(\sqrt 2, \infty\big).
\end{cases}
\end{equation}
\end{lemma}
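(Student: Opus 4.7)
The plan is to proceed by a direct case analysis on the value of $d_{n+1}\in\{1,2,3\}$ and on whether $\alpha^*_n>1$ or $\alpha^*_n<1$, giving six cases in all (the degenerate values $u_n=0$ for $n\le m(\alpha)$ fall under the convention $\alpha^*_n=\infty>1$ with $d_{n+1}=3$). In each case, I would start from the relation $\alpha_n=A_{d_{n+1}}\cdot\alpha_{n+1}$ coming from the $\mathbf H_4$-expansion, apply $J$ to one or both sides according to \eqref{DefM}, and then use the conjugation identities \eqref{MJ} together with the derived relation $A_1J=JA_3$ (which is immediate from \eqref{eq:AB}) to rewrite the right-hand side in the form $B\cdot\alpha'_{n+1}$ with $B\in\{A_1J,A_2,A_3\}$.

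The key auxiliary input is to determine $\alpha^*_{n+1}$ in each case, so as to know whether $\alpha'_{n+1}=\alpha_{n+1}$ or $\alpha'_{n+1}=J\cdot\alpha_{n+1}$. Reading off \eqref{uw}, if $d_{n+1}=3$ then $w_{n+1}=\sqrt 2\,u_n+w_n>u_n=u_{n+1}$, hence $\alpha^*_{n+1}>1$ unconditionally; if $d_{n+1}=1$ then $u_{n+1}=u_n+\sqrt 2\,w_n>w_n=w_{n+1}$, hence $\alpha^*_{n+1}<1$ unconditionally; and if $d_{n+1}=2$ a one-line calculation gives that $\alpha^*_{n+1}-1$ has the same sign as $\alpha^*_n-1$. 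In each of the six cases this prescribes $\alpha'_{n+1}$ uniquely in terms of $\alpha_{n+1}$.

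Combining the two steps yields the matrix identity in each case. For instance, if $\alpha^*_n<1$ and $d_{n+1}=3$, then $\alpha^*_{n+1}>1$, so $\alpha'_{n+1}=\alpha_{n+1}$ and
\[
\alpha'_n=J\cdot\alpha_n=JA_3\cdot\alpha_{n+1}=A_1J\cdot\alpha'_{n+1},
\]
consistent with $\alpha'_n=1/\alpha_n\in(0,1/\sqrt 2)$ since $\alpha_n\in(\sqrt 2,\infty)$. The remaining five cases run analogously, using $JA_1J=A_3$ and $JA_2J=A_2$. The matching between each branch of \eqref{eq:RCF} and the interval in which $\alpha'_n$ lies is then automatic, since $J$ swaps $(0,1/\sqrt 2)\leftrightarrow(\sqrt 2,\infty)$ and preserves $(1/\sqrt 2,\sqrt 2)$, while $A_1J,A_2,A_3$ carry $(0,\infty)$ onto $(0,1/\sqrt 2),(1/\sqrt 2,\sqrt 2),(\sqrt 2,\infty)$ respectively. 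The only real obstacle is the bookkeeping for the six sign-and-digit combinations, which is routine once the recurrence \eqref{uw} for $u_n,w_n$ and the conjugation identities \eqref{MJ} are in hand.
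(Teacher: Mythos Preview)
Your proposal is correct and follows essentially the same approach as the paper: both start from $\alpha_n=A_{d_{n+1}}\cdot\alpha_{n+1}$, insert or remove $J$ according to the definition \eqref{DefM}, and invoke the conjugation identities \eqref{MJ}. The paper organizes the case split by the four sign combinations of $(\alpha^*_n,\alpha^*_{n+1})$ and then reads off $d_{n+1}$ from the interval containing $\alpha'_n$, whereas you organize by the six pairs $(d_{n+1},\operatorname{sgn}(\alpha^*_n-1))$ and determine $\operatorname{sgn}(\alpha^*_{n+1}-1)$ from \eqref{uw}; these are two slicings of the same table. One small wording issue: your parenthetical about the degenerate range $n\le m(\alpha)$ says ``with $d_{n+1}=3$'', but for $n=m(\alpha)$ one has $d_{n+1}\in\{1,2\}$; this does not affect the argument since your six cases already cover $\alpha^*_n=\infty>1$ with any $d_{n+1}$.
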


\begin{proof}
We note that that $\alpha^*_1 < 1$ if and only if $d_1 = 1$.
Therefore, the lemma holds for $n=0$.
Since $\alpha_n = A_{d_{n+1}} \cdot \alpha_{n+1}$,
by \eqref{MJ}, we have for $n\ge 0$
\begin{equation*}
\alpha'_n =\begin{cases} A_{d_{n+1}} \cdot \alpha'_{n+1},&\text{if }\alpha^*_n>1,\ \alpha^*_{n+1}>1,\\
A_{d_{n+1}}J \cdot \alpha'_{n+1},&\text{if }\alpha^*_n>1,\ \alpha^*_{n+1}<1,\\
JA_{d_{n+1}} \cdot \alpha'_{n+1} = A_{d_{n+1}^\vee}J \cdot \alpha'_{n+1},&\text{if }\alpha^*_n<1,\ \alpha^*_{n+1}>1,\\
JA_{d_{n+1}}J \cdot \alpha'_{n+1}=A_{d_{n+1}^\vee}\cdot \alpha'_{n+1},&\text{if }\alpha^*_n<1,\ \alpha^*_{n+1}<1.
\end{cases}
\end{equation*}
If $\alpha'_{n} \in \big(0, \frac{1}{\sqrt 2}\big)$, then 
$d_{n+1}=1$, $\alpha^*_n > 1$ or $d_{n+1}=3$, $\alpha^*_n < 1$, thus $\alpha^*_{n+1}<1$, $\alpha^*_n > 1$ or $\alpha^*_{n+1}>1$, $\alpha^*_n < 1$.
If $\alpha'_{n} \in \big(\frac{1}{\sqrt 2}, \sqrt 2 \big)$, then $d_{n+1}=2$, thus $\alpha^*_{n+1}<1$, $\alpha^*_n < 1$ or $\alpha^*_{n+1}>1$, $\alpha^*_n > 1$.
If $\alpha'_{n} \in \big(\sqrt 2, \infty\big)$, then 
$d_{n+1}=3$, $\alpha^*_n > 1$ or $d_{n+1}=1$, $\alpha^*_n < 1$, thus $\alpha^*_{n+1}>1$, $\alpha^*_n > 1$ or $\alpha^*_{n+1}<1$, $\alpha^*_n < 1$.
\end{proof}

\begin{proposition}\label{prop1}
Let $\alpha$ be a positive real number not in $\mathbb Q (\mathbf H_4)$ and $m=m(\alpha)$.
The set of the convergents of the Rosen continued fraction expansion of $\alpha$ satisfies that   
$$
\left\{ \frac{r_i}{s_i} \in \mathbb Q (\mathbf H_4) \, | \, i \ge 0 \right\} =
\left\{ \frac{t_n}{u_n} \in \mathbb Q (\mathbf H_4) \, | \, \alpha^*_n > 1, \ n \ge m+ 1 \right\} \cup \left\{ \frac{v_n}{w_n} \in \mathbb Q (\mathbf H_4) \, | \, \alpha^*_n < 1, \ n \ge m+ 1 \right\}.
$$
\end{proposition}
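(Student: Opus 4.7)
I rewrite the right-hand side as $\{M_n\cdot\infty : n\ge m+1\}$ and then prove by induction on the Rosen depth that these values, after removing duplicates, are exactly the Rosen convergents. Using~\eqref{DefM} directly, $M_n\cdot\infty = t_n/u_n$ if $\alpha^*_n > 1$ and $M_n\cdot\infty = G_nJ\cdot\infty = G_n\cdot 0 = v_n/w_n$ if $\alpha^*_n < 1$, so the RHS equals $\{M_n\cdot\infty : n\ge m+1\}$. Lemma~\ref{RCF} gives $M_{n+1} = M_n A_{\sigma_n}$, and since $A_3\cdot\infty = \infty$ while $A_1J\cdot\infty = 0$ and $A_2\cdot\infty = \sqrt 2$, the sequence $n \mapsto M_n\cdot\infty$ is constant precisely on runs $\sigma_n = A_3$. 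Enumerate the indices $N_0 < N_1 < \cdots$ with $\sigma_{N_k} \ne A_3$; by definition of $m(\alpha)$, $N_0 = m(\alpha)$, and the distinct values of $\{M_n\cdot\infty : n \ge m+1\}$ form the set $\{M_{N_k + 1}\cdot\infty : k \ge 0\}$.

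\textbf{Main identity.} I will show $M_{N_k + 1} = R_k A_3$ for all $k \ge 0$, where $R_k := M^{(0)} M^{(1)}\cdots M^{(k)}$ is the Rosen transfer product with
\[M^{(j)} := \begin{pmatrix} a_j\sqrt 2 & \epsilon_{j+1} \\ 1 & 0 \end{pmatrix}.\]
The first column of $R_k$ is $(r_k, s_k)^T$ by the standard convergent recurrence, so this identity yields $M_{N_k+1}\cdot\infty = R_k\cdot\infty = r_k/s_k$, proving both inclusions at once. The base $k = 0$ is a direct computation: either $\epsilon_1 = +1$ (so $m = a_0$, $\sigma_m = A_1J$, $M_{m+1} = A_3^{a_0} A_1 J$) or $\epsilon_1 = -1$ (so $m = a_0 - 1$, $\sigma_m = A_2$, $M_{m+1} = A_3^{a_0 - 1} A_2$); both forms evaluate to
\[R_0 A_3 = \begin{pmatrix} a_0\sqrt 2 & 2a_0 + \epsilon_1 \\ 1 & \sqrt 2\end{pmatrix}.\]

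\textbf{Inductive step.} Assuming $M_{N_k+1} = R_k A_3$ and setting $c := N_{k+1} - N_k - 1 \ge 0$, the recurrence yields $M_{N_{k+1}+1} = R_k A_3 \cdot A_3^c \cdot A_{\sigma_{N_{k+1}}} = R_k A_3^{c+1} A_{\sigma_{N_{k+1}}}$. Matching this against $R_{k+1} A_3 = R_k M^{(k+1)} A_3$ reduces to the identity $M^{(k+1)} = A_3^{c+1} A_{\sigma_{N_{k+1}}} A_3^{-1}$, and direct matrix multiplication produces
\[A_3^{c+1} A_2 A_3^{-1} = \begin{pmatrix} (c+2)\sqrt 2 & -1 \\ 1 & 0 \end{pmatrix}, \quad A_3^{c+1} A_1 J A_3^{-1} = \begin{pmatrix} (c+1)\sqrt 2 & +1 \\ 1 & 0 \end{pmatrix}.\]
The induction thus closes provided $(a_{k+1}, \epsilon_{k+2}) = (c+2, -1)$ when $\sigma_{N_{k+1}} = A_2$ and $(a_{k+1}, \epsilon_{k+2}) = (c+1, +1)$ when $\sigma_{N_{k+1}} = A_1 J$.

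\textbf{Consistency (main obstacle).} The hardest step is to verify these two formulas against the actual Rosen digits of $\alpha$. From $\alpha = M_{N_k+1}\cdot\alpha'_{N_k+1} = R_k A_3 \cdot \alpha'_{N_k+1}$ combined with $\alpha = R_k \cdot \beta_{k+1}$ (where $\beta_{k+1} = \llbracket a_{k+1}; \epsilon_{k+2}/a_{k+2}, \ldots\rrbracket$ is the $(k+1)$-th Rosen tail), I deduce $\alpha'_{N_k + 1} = \beta_{k+1} - \sqrt 2 = (a_{k+1} - 1)\sqrt 2 + \epsilon_{k+2}/\beta_{k+2}$ with $\beta_{k+2} \in (\sqrt 2, \infty)$, so the ``fractional'' term lies in $(-1/\sqrt 2, 1/\sqrt 2)$. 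An elementary case analysis tracking $\alpha'_{N_k + 1 + j} = \alpha'_{N_k + 1} - j\sqrt 2$ as it moves through $(0, 1/\sqrt 2) \cup (1/\sqrt 2, \sqrt 2) \cup (\sqrt 2, \infty)$ then pins down $c$ and $\sigma_{N_{k+1}}$ exactly in terms of $(a_{k+1}, \epsilon_{k+2})$: $\epsilon_{k+2} = +1$ forces $\sigma_{N_{k+1}} = A_1J$ with $c = a_{k+1} - 1$, and $\epsilon_{k+2} = -1$ forces $\sigma_{N_{k+1}} = A_2$ with $c = a_{k+1} - 2$. The requirement $c \ge 0$ in the second case (which demands $a_{k+1} \ge 2$) is exactly guaranteed by the Rosen uniqueness convention that $a_i = 1$ implies $\epsilon_{i+1} = +1$, so the matrix identity matches the RCF throughout and the induction is complete.
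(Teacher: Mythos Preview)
Your proof is correct and follows essentially the same strategy as the paper's: both rewrite the right-hand side as $\{M_n\cdot\infty : n\ge m+1\}$ and then decompose the $(M_n)$ sequence into blocks indexed by the Rosen digits. The paper carries out the block decomposition by introducing an auxiliary ``one-step'' map $\varphi$ on Rosen expansions, proving $\varphi^n(\alpha+\sqrt2)=\alpha'_n+\sqrt2$, and then reading off $M_n\cdot\infty=r_{i-1}/s_{i-1}$ from the tail formula $\alpha'_n=\llbracket a_i-b;\epsilon_{i+1}/a_{i+1},\dots\rrbracket$; you instead prove the matrix identity $M_{N_k+1}=R_kA_3$ directly by induction, which packages the same information (your $c=N_{k+1}-N_k-1$ plays the role of the paper's block lengths $m_i$). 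The two arguments are equivalent reorganizations of the same computation.
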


\begin{proof}
Let $\alpha \not\in \mathbb Q(\mathbf H_4)$ such that $\alpha\in(0,\infty)$.
Let $\alpha$ have the Rosen continued fraction expansion
$$\alpha = \left \llbracket a_0 ; \epsilon_1 / a_1, \epsilon_2/a_2, \dots \right \rrbracket.$$
Note that
$a_i \ge 2$ if $\epsilon_{i+1} = -1$ for $i \ge 1$.
Let
\begin{equation}\label{eq:ABp}
\varphi \left( \llbracket a_0 ; \epsilon_1 / a_1,  \epsilon_2/a_2, \dots  \rrbracket \right)
= \begin{cases}
\llbracket a_1 ; \epsilon_2 / a_2,  \epsilon_3/a_3, \dots  \rrbracket, &\text{if } a_0 = 1, \epsilon_1 = +1, \\
\llbracket a_1 ; \epsilon_2 / a_2,  \epsilon_3/a_3, \dots  \rrbracket, &\text{if } a_0 = 2, \epsilon_1 = -1, \\
\llbracket a_0 -1 ; \epsilon_1 / a_1,  \epsilon_2/a_2, \dots  \rrbracket, &\text{otherwise}.
\end{cases}
\end{equation}
Then, from \eqref{eq:AB}, we have
\begin{equation}\label{eq:ABp1}
\varphi^n \left(\alpha + \sqrt 2 \right) = \alpha'_n + \sqrt 2.
\end{equation}
If $\epsilon_1=+1$, then $m=a_0$ and $d_{m+1}=1$, and if $\epsilon_1=-1$, then $m=a_0-1$ and $d_{m+1}=2$.
Thus $\alpha = A_3^mA_1J \cdot \alpha'_{m+1}$ if $\epsilon_1=+1$, and $\alpha =A_3^mA_2 \cdot \alpha'_{m+1}$ if $\epsilon_1=-1$.
From \eqref{eq:ABp1}, we have
\begin{equation}\label{eq:M_m+1}
\alpha'_{m+1} = \llbracket a_1-1;\epsilon_2/a_2,\epsilon_3/a_3,\dots\rrbracket
\quad \text{ and } \quad f \left( \alpha + \sqrt 2\right) = \alpha'_{m+1} + \sqrt 2 = \varphi^{m+1} \left( \alpha + \sqrt 2\right).
\end{equation}
Let $m_i = m(\llbracket a_i;\epsilon_{i+1}/a_{i+1},\epsilon_{i+1}/a_{i+1},\dots\rrbracket)$ for $i\ge0$, i.e.,
$$m_i = \begin{cases} a_i, &\text{if } \ \epsilon_{i+1}=+1,\\
a_i-1, &\text{if } \ \epsilon_{i+1}=-1.\end{cases}$$
Then $m_0\ge 0$ and $m_i\ge 1$ for $i\ge 1$.
For each $n\ge 1$, there exist $i\ge0$ and $b$ such that
$$n = m_0+m_1+\cdots+m_{i-1}+ b, \quad  1\le b \le m_i.$$
By \eqref{eq:M_m+1}, we have 
$$
\alpha'_n = \llbracket a_i- b; \epsilon_{i+1}/a_{i+1},\epsilon_{i+2}/a_{i+2},\dots\rrbracket
\quad\text{and}\quad
\alpha = M_n\cdot \alpha'_n = a_0\sqrt{2}+\dfrac{\epsilon_1}{\ddots+\dfrac{\ddots}{a_{i-1}\sqrt{2}+\dfrac{\epsilon_i}{b \sqrt 2 +\alpha'_n}}},$$
thus $M_n\cdot \infty = \llbracket a_0;\epsilon_1/a_1,\dots,\epsilon_{i-1}/a_{i-1}\rrbracket = \frac{r_{i-1}}{s_{i-1}}$.
Therefore, the set of convergent of the Rosen continued fraction $\frac{r_i}{s_i}$ for $i \ge 0$ is
$$
\left \{ \frac{r_i}{s_i} \in \mathbb Q (\mathbf H_4)  \, | \, i \ge 0 \right \} = \left\{ M_n \cdot \infty \, | \,  n \ge m + 1 \right\}.
$$
From \eqref{DefM}, we have 
\begin{align*}
\left\{ M_n \cdot \infty \, | \,  n \ge m + 1 \right\} &= \left\{ G_n \cdot \infty \, | \, \alpha^*_n > 1, \  n \ge m + 1 \right\} \cup \left\{ G_n \cdot 0 \, | \,  \alpha^*_n < 1, \ n \ge m + 1 \right\} \\
&= \left\{ \frac{t_n}{u_n} \in \mathbb Q (\mathbf H_4) 
\, | \, \alpha^*_n > 1, \ n \ge m+ 1 \right\} \cup \left\{ \frac{v_n}{w_n} \in \mathbb Q (\mathbf H_4) 
\, | \, \alpha^*_n < 1, \ n \ge m+ 1 \right\}.
\end{align*}
\end{proof}

Let us define the dual Rosen continued fraction map $\tilde f: [1, \infty) \setminus \mathbb Q (\mathbf H_4) \to [1, \infty) \setminus \mathbb Q (\mathbf H_4)$ by
\begin{equation}\label{exp_dR}
\tilde{f}(x)=\frac{1}{\left|x- \tilde a_0 \sqrt{2}\right|},\quad \text{ where $\tilde a_0 = \tilde a_0(x)$ satisfies } \ (\tilde a_0-1)\sqrt{2}+1\le \alpha< \tilde a_0\sqrt{2}+1.
\end{equation}
Let $\tilde \epsilon_1(x) \in \{ -1, +1\}$ be the sign of $x- \tilde a_0 \sqrt{2}$.
Define $\tilde a_i(x) = \tilde a_0 \left (\tilde f^i (x)\right)$ and 
$\tilde \epsilon_{i+1}(x) = \tilde \epsilon_{1} \left (\tilde f^i (x)\right)$ for $i \ge 1$.
Then we have the dual Rosen continued fraction expansion
$$x = \llbracket \tilde a_0; \tilde\epsilon_1 / \tilde a_1, \tilde\epsilon_2 / \tilde a_2 , \dots \rrbracket,$$
where $\tilde a_i\ge 1$ and $\tilde a_i \ge 2$ if $\tilde \epsilon_i = -1$.
Let 
\begin{equation}\label{eq:dRCF}
N_n = 
\begin{cases}
G_n, 
& \text{if }  \alpha_n > 1 \text{ or } \alpha_n = 1,  \alpha^*_n > 1, \\
G_nJ, 
& \text{if }  \alpha_n < 1 \text{ or } \alpha_n = 1,  \alpha^*_n < 1,
\end{cases} \quad
\tilde \alpha'_n = 
\begin{cases}
\alpha_n, 
& \text{if }  \alpha_n > 1 \text{ or } \alpha_n = 1,  \alpha^*_n > 1, \\
J \cdot \alpha_n, 
& \text{if }  \alpha_n < 1 \text{ or } \alpha_n = 1,  \alpha^*_n < 1.
\end{cases}
\end{equation}
Then $\alpha = N_n \cdot \tilde \alpha'_n$ and 
we have the following recurrence relation.

\begin{lemma}\label{dRCF}
Let $[d_1, d_2, \dots ]$ be the $\mathbf H_4$-expansion of $\alpha$.
We have for $n \ge 0,$
\begin{equation}\label{eq:rec_dRCF}
\tilde \alpha'_n = \begin{cases}
A_2 \cdot \tilde \alpha'_{n+1},& \text{ if } \ \tilde \alpha'_n \in [1, \sqrt 2), \\
JA_1 \cdot \tilde \alpha'_{n+1}, & \text{ if } \ \tilde \alpha'_n \in  (\sqrt 2,\sqrt 2 +1), \\
A_3 \cdot \tilde \alpha'_{n+1},& \text{ if } \ \tilde \alpha'_n \in [\sqrt 2 +1, \infty).
\end{cases}
\end{equation}
\end{lemma}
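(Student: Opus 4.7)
The plan is to mirror the proof of Lemma~\ref{RCF}, working directly from the identity $\alpha_n = A_{d_{n+1}} \cdot \alpha_{n+1}$ together with the partition
\[
A_1 \cdot (0, \infty) = (0, \tfrac{1}{\sqrt 2}), \quad A_2 \cdot (0, \infty) = (\tfrac{1}{\sqrt 2}, \sqrt 2), \quad A_3 \cdot (0, \infty) = (\sqrt 2, \infty),
\]
which tells us $d_{n+1}$ from the location of $\alpha_n$. The reformulation of \eqref{eq:dRCF} I would use is: $\tilde\alpha'_n = \alpha_n$ when $\alpha_n \ge 1$ and $\tilde\alpha'_n = 1/\alpha_n$ when $\alpha_n < 1$ (with the boundary case $\alpha_n = 1$ handled by the auxiliary sign of $\alpha^*_n - 1$, which enters only finitely often and can be inspected separately). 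In either branch $\tilde\alpha'_n \ge 1$.

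The heart of the argument is a case analysis on the three intervals $[1, \sqrt 2)$, $(\sqrt 2, \sqrt 2 + 1)$, $[\sqrt 2 + 1, \infty)$ for $\tilde\alpha'_n$. For $\tilde\alpha'_n \in [1, \sqrt 2)$, one has $\alpha_n = \tilde\alpha'_n \in A_2 \cdot (0, \infty)$, so $d_{n+1} = 2$; using $A_2 \cdot x = \sqrt 2 - 1/(x + \sqrt 2)$ one checks $\alpha_n \ge 1 \Leftrightarrow \alpha_{n+1} \ge 1$, hence $\tilde\alpha'_{n+1} = \alpha_{n+1}$ and $\tilde\alpha'_n = A_2 \cdot \tilde\alpha'_{n+1}$. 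For $\tilde\alpha'_n \in [\sqrt 2 + 1, \infty)$ there are two subcases: if $\tilde\alpha'_n = \alpha_n$, then $d_{n+1} = 3$ and $\alpha_{n+1} = \alpha_n - \sqrt 2 \ge 1$, so directly $\tilde\alpha'_n = A_3 \cdot \tilde\alpha'_{n+1}$; if instead $\tilde\alpha'_n = 1/\alpha_n$, then $\alpha_n \le \sqrt 2 - 1 < 1/\sqrt 2$, so $d_{n+1} = 1$, $\alpha_{n+1} \le 1$, and $\tilde\alpha'_{n+1} = 1/\alpha_{n+1}$, which gives $\tilde\alpha'_n = J A_1 J \cdot \tilde\alpha'_{n+1} = A_3 \cdot \tilde\alpha'_{n+1}$ after invoking $A_{1^\vee} = J A_1 J$ from \eqref{MJ}.

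The interval $(\sqrt 2, \sqrt 2 + 1)$ is the interesting one because it is precisely where the two subcases of \eqref{eq:dRCF} meet. If $\tilde\alpha'_n = \alpha_n$ then $d_{n+1} = 3$, $\alpha_{n+1} \in (0, 1)$, $\tilde\alpha'_{n+1} = 1/\alpha_{n+1}$, and $\tilde\alpha'_n = A_3 J \cdot \tilde\alpha'_{n+1}$; if $\tilde\alpha'_n = 1/\alpha_n$ then $\alpha_n \in (\sqrt 2 - 1, 1/\sqrt 2)$, $d_{n+1} = 1$, $\alpha_{n+1} > 1$, $\tilde\alpha'_{n+1} = \alpha_{n+1}$, and $\tilde\alpha'_n = J A_1 \cdot \tilde\alpha'_{n+1}$. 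The two must be reconciled, and this is exactly the matrix identity $A_3 J = J A_1$ (both sides equal $\begin{pmatrix} \sqrt 2 & 1 \\ 1 & 0 \end{pmatrix}$), which is another instance of \eqref{MJ}.

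The main obstacle is the bookkeeping over the four sign combinations of $\alpha_n - 1$ and $\alpha_{n+1} - 1$, and making sure that in every combination the matrix prefix collapses, via \eqref{MJ}, onto the single representative listed in \eqref{eq:rec_dRCF}. Boundary cases where $\alpha_n$ or $\alpha_{n+1}$ equals $1$ occur only when $\alpha$ is a rational image, outside our standing hypothesis, but can be checked directly using the secondary condition on $\alpha^*$ in the definition \eqref{eq:dRCF}. Once these identifications are in place the lemma follows without further calculation.
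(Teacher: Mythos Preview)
Your approach is essentially the paper's: a case split on the location of $\tilde\alpha'_n$, unwinding to $\alpha_n$, reading off $d_{n+1}$, and using the relations \eqref{MJ} to collapse the resulting matrix product. Two points need fixing.

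First, in the case $\tilde\alpha'_n \in [1,\sqrt 2)$ you write ``$\alpha_n = \tilde\alpha'_n$'' as though this were automatic. It is not: you may equally have $\alpha_n = 1/\tilde\alpha'_n \in (1/\sqrt 2,1)$, and this second subcase must be treated (it goes through because $J A_2 J = A_{2^\vee} = A_2$). You acknowledge the ``four sign combinations'' later, but the explicit treatment of this interval covers only one.

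Second, your claim that $\alpha_n = 1$ ``occur[s] only when $\alpha$ is a rational image, outside our standing hypothesis'' is false: $1 = [2^\infty] \notin \mathbb Q(\mathbf H_4)$, so any $\alpha$ whose $\mathbf H_4$-expansion ends in $2^\infty$ (e.g.\ $\alpha = 1$ itself, as in Example~\ref{example:sup}) has $\alpha_n = 1$ for all large $n$. The paper handles this explicitly: when $\alpha_n = 1$ one has $d_{n+1} = 2$, $\alpha_{n+1} = 1$, and $\alpha^*_{n+1} > 1 \Leftrightarrow \alpha^*_n > 1$, so the tie-breaking rule in \eqref{eq:dRCF} is consistent and $\tilde\alpha'_n = A_2 \cdot \tilde\alpha'_{n+1}$ still holds. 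Likewise the endpoint $\tilde\alpha'_n = \sqrt 2 + 1$ forces $\alpha_{n+1} = 1$, and the paper checks that the $\alpha^*$-sign is inherited correctly there too. You cannot simply set these cases aside.
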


\begin{proof}
Suppose $\tilde \alpha'_{n} \in \big[1, \sqrt 2 \big)$.
Then $\alpha_n \in \big( \frac{1}{\sqrt 2}, \sqrt 2 \big)$ and $d_{n+1}=2$.
If $\alpha_n > 1$, then $\alpha_{n+1}>1$, 
if $\alpha_n < 1$, then $\alpha_{n+1}<1$, 
if $\alpha_n = 1$, then $\alpha_{n+1}=1$ and  
$\alpha^*_{n} > 1$ is equivalent to $\alpha^*_{n+1} > 1$.
Therefore, by \eqref{MJ} and \eqref{eq:dRCF}
$$
\tilde \alpha'_n = A_2 \cdot \tilde \alpha'_{n+1}.
$$
If $\tilde \alpha'_{n} \in \big(\sqrt 2, \sqrt 2 +1 \big)$, then $\alpha_{n} \in \big(\sqrt 2, \sqrt 2 +1 \big) = A_3 \cdot (0,1)$, which implies $\alpha_{n+1} \in (0,1)$, or 
$\alpha_{n} \in \big(\frac{1}{\sqrt 2+1}, \frac{1}{\sqrt 2} \big)= A_1 \cdot (1,\infty)$, which implies $\alpha_{n+1} \in (1, \infty)$.
Therefore, by \eqref{MJ} and \eqref{eq:dRCF}
$$
\tilde \alpha'_n = J A_1 \cdot \tilde \alpha'_{n+1}.
$$
If $\tilde \alpha'_{n} \in \big[\sqrt 2 +1, \infty\big)$, 
then $\alpha_{n} \in \big[\sqrt 2 +1, \infty\big) = A_3 \cdot [1, \infty)$, which implies $\alpha_{n+1} \in [1, \infty)$, $\alpha^*_{n+1}>1$ or 
$\alpha_{n} \in \big(0, \frac{1}{\sqrt 2+1} \big]= A_1 \cdot (0,1]$, which implies $\alpha_{n+1} \in (0,1]$, $\alpha^*_{n+1}<1$.
Therefore, by \eqref{MJ} and \eqref{eq:dRCF}
\[
\tilde \alpha'_n = A_3 \cdot \tilde \alpha'_{n+1}. \qedhere
\]
\end{proof}

By Lemma~\ref{dRCF}, $N_n$ is a product of the matrices $JA_1$, $A_2$, $A_3$ corresponding to the linear fractional maps
$$
JA_1 \cdot \alpha = A_3 J \cdot \alpha = \sqrt 2 + \frac{1}{\alpha}, \qquad
A_2 \cdot \alpha = \sqrt 2 - \frac{1}{\sqrt 2 + \alpha}\qquad \text{and}\qquad
A_3 \cdot \alpha = \sqrt 2 + \alpha.$$
Note that
\begin{equation}\label{dualRCF}
\left \llbracket \tilde a_0 ; \tilde \epsilon_1 / \tilde a_1,  \tilde \epsilon_2/\tilde a_2, \dots \right \rrbracket
= \begin{cases}
A_2 \cdot \left \llbracket  \tilde a_1 -1 ; \tilde \epsilon_2 / \tilde a_2, \tilde \epsilon_3/\tilde a_3, \dots\right \rrbracket , &\text{ if } \tilde a_0 = 1, \tilde \epsilon_1 = -1, \\ 
JA_1 \cdot \left \llbracket \tilde a_1 ; \tilde \epsilon_2 / \tilde a_2, \tilde \epsilon_3/a_3, \dots\right \rrbracket , &\text{ if } \tilde a_0 = 1, \tilde \epsilon_1 = +1, \\ 
A_3 \cdot \left \llbracket \tilde a_0 -1 ; \tilde \epsilon_1 / \tilde a_1, \tilde \epsilon_2/\tilde a_2, \dots \right \rrbracket, &\text{ otherwise}.
\end{cases}
\end{equation}

\begin{proposition}\label{prop2}
Let $\alpha$ be a positive real number not in $\mathbb Q (\mathbf H_4)$. 
Then we have 
$$
\left \{ \frac{\tilde r_i}{\tilde s_i} \, | \, i \ge 1 \right \}
\subset 
\left\{ \frac{t_n}{u_n} 
\ | \ \alpha_n > 1, \ n \ge m+1 \right\} \cup \left\{ \frac{v_n}{w_n} 
\ | \ \alpha_n < 1, \ n \ge m+1 \right\}
\subset \left \{ \frac{\tilde r_i}{\tilde s_i} \, | \, i \ge 1 \right \} \cup \left \{ \frac{r_0}{s_0} \right \}.
$$
\end{proposition}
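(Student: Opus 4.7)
The plan is to mimic the proof of Proposition~\ref{prop1}, replacing Lemma~\ref{RCF} by Lemma~\ref{dRCF}, the matrix identities \eqref{eq:AB} by \eqref{dualRCF}, and $M_n$ by $N_n$. The key observation is that the three-branch recurrence for $\tilde\alpha'_n$ in Lemma~\ref{dRCF} (with regions $[1,\sqrt2)$, $(\sqrt2,\sqrt2+1)$, $[\sqrt2+1,\infty)$) matches exactly the three-branch structure in \eqref{dualRCF} for the dual Rosen expansion.

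First I would introduce an analog $\tilde\varphi$ acting on formal dual Rosen expansions by reading off \eqref{dualRCF}:
\[
\tilde\varphi\bigl(\llbracket \tilde a_0;\tilde\epsilon_1/\tilde a_1,\tilde\epsilon_2/\tilde a_2,\dots\rrbracket\bigr) =
\begin{cases}
\llbracket \tilde a_1-1;\tilde\epsilon_2/\tilde a_2,\dots\rrbracket, & \text{if } \tilde a_0=1, \tilde\epsilon_1=-1, \\
\llbracket \tilde a_1;\tilde\epsilon_2/\tilde a_2,\dots\rrbracket, & \text{if } \tilde a_0=1, \tilde\epsilon_1=+1, \\
\llbracket \tilde a_0-1;\tilde\epsilon_1/\tilde a_1,\tilde\epsilon_2/\tilde a_2,\dots\rrbracket, & \text{otherwise}.
\end{cases}
\]
These three cases correspond respectively to removing one $A_2$, one $JA_1$, or one $A_3$ from the left. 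Comparing the defining intervals of each case with Lemma~\ref{dRCF}, I would verify inductively that $\tilde\varphi^{\,n}(\alpha) = \tilde\alpha'_n$ once $\alpha$ has entered the domain $[1,\infty)$ of $\tilde f$, with appropriate shifts for the initial segment.

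Once this identification is in place, the translation proceeds as in Proposition~\ref{prop1}. For each large enough $n$, one uniquely writes $n - (\text{initial offset}) = \tilde m_0 + \tilde m_1 + \cdots + \tilde m_{i-1} + b$ with $1 \le b \le \tilde m_i$, where $\tilde m_i$ counts the $\tilde\varphi$-iterations used to clear the $i$-th partial quotient. This gives
\[
\alpha = N_n \cdot \tilde\alpha'_n, \qquad N_n\cdot\infty = \frac{\tilde r_{i-1}}{\tilde s_{i-1}}.
\]
Combining with \eqref{eq:dRCF}, which identifies $N_n\cdot\infty$ as $t_n/u_n$ when $\alpha_n>1$ and as $v_n/w_n$ when $\alpha_n<1$, this yields that the middle set of the proposition is contained in $\{\tilde r_i/\tilde s_i\}_{i\ge 1}$ plus possibly the contribution from $n$'s that sit before the first valid $\tilde\varphi$-step.

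The main obstacle is precisely this boundary bookkeeping, which is what prevents equality in the two inclusions. Because $\tilde f$ is only defined on $[1,\infty)$, the initial $\mathbf H_4$-digits with $d_i=3$ and the first non-$3$ digit are absorbed into a prefix of $N_n$ that plays the role of the $A_3^m$-block of the regular Rosen. For the smallest relevant $n$ (roughly $n=m+1$ when $\alpha_n<1$), the matrix $N_n\cdot\infty$ equals $r_0/s_0 = a_0\sqrt2$ rather than a genuine dual Rosen convergent, producing the extra element in the outer set of the second inclusion. Conversely, the exclusion of $\tilde r_0/\tilde s_0 = \tilde a_0\sqrt 2$ from the first inclusion must be checked: depending on which of the three dual Rosen cases $\alpha$ belongs to, $\tilde a_0\sqrt 2$ may coincide with $N_n\cdot\infty$ for some $n\le m$, which is excluded from the middle set by the constraint $n\ge m+1$. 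Carrying this case analysis through carefully, cross-checked against the formula for $\tilde a_0$ in \eqref{exp_dR}, completes both inclusions.
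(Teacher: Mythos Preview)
Your proposal is correct and follows essentially the same route as the paper: introduce the dual iteration (your $\tilde\varphi$ is exactly what the paper encodes in \eqref{dualRCF}), identify $\tilde\alpha'_n$ with successive tails of the dual Rosen expansion via Lemma~\ref{dRCF}, decompose $n$ into blocks of lengths $\tilde m_i$ to read off $N_n\cdot\infty = \tilde r_{i-1}/\tilde s_{i-1}$, and then unwind \eqref{eq:dRCF} to recover the middle set as those $N_n\cdot\infty$ with $n\ge m+1$ and $\alpha_n\ne 1$.

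One point where your boundary discussion is slightly off: the extra singleton $\{r_0/s_0\}$ on the right of the second inclusion does not arise because $r_0/s_0$ ``fails to be a dual Rosen convergent.'' The paper's case split is on whether $a_0 = \tilde a_0$. When $a_0\ne\tilde a_0$ (namely $\alpha\in(m\sqrt2+\tfrac1{\sqrt2},\,m\sqrt2+1)$), one has $\tilde m_0=m$, $\tilde m_1=1$, and then every $n\ge m+1=\tilde m_0+\tilde m_1$ already produces some $\tilde r_i/\tilde s_i$ with $i\ge 1$, so $r_0/s_0$ never appears in $\{N_n\cdot\infty:n\ge m+1\}$. It is in the complementary case $a_0=\tilde a_0$ that $\tilde r_0/\tilde s_0=r_0/s_0$ can occur among the $N_n\cdot\infty$ for $n\ge m+1$, which is why the statement packages it as $\{r_0/s_0\}$ rather than $\{\tilde r_0/\tilde s_0\}$. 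With this correction your outline goes through exactly as the paper's proof does.
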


\begin{proof}
Let $\alpha \in [1, \infty)$ be a real number not in $\mathbb Q (\mathbf H_4)$ with the dual Rosen continued fraction expansion
$$\alpha = \llbracket \tilde a_0; \tilde\epsilon_1 / \tilde a_1, \tilde\epsilon_2 / \tilde a_2 , \dots \rrbracket,$$
where $\tilde a_i \ge 1$ and $\tilde a_i \ge 2$ if $\tilde \epsilon_i = -1$ for $i \ge 1$.
We check $\alpha \in \big[ (\tilde a_0 -1 ) \sqrt 2 + 1, \tilde a_0 \sqrt 2 + 1 \big)$.
By Lemma~\ref{dRCF}, we have
\begin{equation*}
\alpha  =
\begin{cases}
 A_3^{\tilde a_0 -1}A_2 \cdot \tilde \alpha'_{\tilde a_0},&\text{if }\alpha \in \big[ (\tilde a_0 -1 ) \sqrt 2 + 1, \tilde a_0 \sqrt 2 \big),\\
A_3^{\tilde a_0-1}JA_1 \cdot \tilde \alpha'_{\tilde a_0},&\text{if }\alpha \in \big( \tilde a_0 \sqrt 2, \tilde a_0 \sqrt 2 + 1 \big).
\end{cases}
\end{equation*}
Thus, using \eqref{dualRCF}, we have
\begin{equation}\label{eq:N_a}
\tilde \alpha'_{\tilde{a}_0} =
\begin{cases}
\llbracket \tilde{a}_1-1;\tilde \epsilon_2 /\tilde{a}_2,\tilde \epsilon_3/\tilde{a}_3,\dots\rrbracket,&\text{if } \ \tilde{\epsilon}_1=-1,\\
\llbracket \tilde{a}_1;\tilde \epsilon_2/\tilde{a}_2,\tilde \epsilon_3/\tilde{a}_3,\dots\rrbracket,&\text{if } \ \tilde{\epsilon}_1=+1.
\end{cases}
\end{equation}

Let 
$$
\tilde m_0 = \tilde a_0 \quad \text{ and } \quad
\tilde m_i = \begin{cases} \tilde a_i -1, & \text{ if } \tilde \epsilon_{i} = -1, \\ 
\tilde a_i, & \text{ if } \tilde \epsilon_{i} = +1,
\end{cases} \qquad \text{ for } \ i \ge 1.
$$
We note that $\tilde m_i \ge 1$ for $i \ge 0$.  
For each $n \ge 1,$ we have $i \ge 0$ and $\tilde b$ satisfying that 
$$n = \tilde m_0 + \tilde m_1 + \dots + \tilde m_{i} - \tilde b \ \text{ and } \ 1 \le \tilde b \le \tilde m_i.$$
By \eqref{eq:N_a}, we have
$$
\tilde \alpha'_n = \llbracket \tilde b ; \tilde \epsilon_{i+1} / \tilde a_{i+1},  \tilde \epsilon_{i+2}/ \tilde a_{i+2}, \dots \rrbracket, \quad 
\alpha = N_n\cdot \tilde \alpha'_n = \tilde a_0\sqrt{2}+\dfrac{\tilde \epsilon_1}{\ddots+\dfrac{\ddots}{\tilde a_{i-1}\sqrt{2}+\dfrac{\tilde \epsilon_i}{(\tilde a_i - \tilde b)\sqrt 2 +\tilde \alpha'_n}}}.
$$
Therefore, we have 
$$
N_n \cdot \infty = 
\left \llbracket \tilde a_0 ; \tilde \epsilon_1 / \tilde a_1,  \epsilon_2 / \tilde a_2, \dots , \tilde \epsilon_{i-1} / \tilde a_{i-1} \right \rrbracket = \frac{\tilde r_{i-1}}{\tilde s_{i-1}}.
$$
Recall that $\frac{\tilde r_{i-1}}{\tilde s_{i-1}}$ is the $(i-1)$-th convergent of the dual Rosen continued fraction of $\alpha$.

By \eqref{exp_R} and \eqref{exp_dR},
if $a_0 = \frac{r_0}{s_0} \ne \frac{\tilde r_0}{\tilde s_0} = \tilde a_0$, then $\alpha \in \left( m\sqrt 2 + \frac{1}{\sqrt 2}, m\sqrt 2 +1 \right)$ for some $m \ge 1$.
Then we have the dual Rosen continued fraction expansion
$$
\alpha = \left \llbracket m ; +1/ \tilde a_1,  \epsilon_2 / \tilde a_2, \dots \right \rrbracket \ \text{ with } \
\tilde \alpha'_m=\left \llbracket  \tilde a_1 ; \tilde \epsilon_2 / \tilde a_2,  \epsilon_3 / \tilde a_3, \dots \right \rrbracket \in (1, \sqrt 2).
$$
Therefore,
$$
\tilde a_0 = m = \tilde m_0, \quad \tilde \epsilon_1 = +1, \quad \tilde a_1 = 1 = \tilde m_1, \quad \tilde \epsilon_2 = -1.
$$
If $n \ge m +1 = \tilde m_0 + \tilde m_1$, then we have  
$$
N_n \cdot \infty = \frac{\tilde r_i}{\tilde s_i} \quad \text{ for some } i \ge 1.
$$
Hence, we have 
$$
\left\{ N_n \cdot \infty \, | \,  n \ge m +1 \right\} = \left \{ \frac{\tilde r_i}{\tilde s_i} \, | \, i \ge 1 \right \}
\quad \text{ or } \quad 
\left\{ N_n \cdot \infty \, | \,  n \ge m +1 \right\} = \left \{ \frac{\tilde r_i}{\tilde s_i} \, | \, i \ge 1 \right \}
\cup \left \{ \frac{r_0}{s_0} \right \}.
$$
By \eqref{eq:dRCF}, we have 
\begin{align*}
\left\{ N_n \cdot \infty \, | \,  n \ge m +1 \right\} 
&= \left\{ G_n \cdot \infty \, | \, \alpha_n > 1, \  n \ge m +1 \right\} \cup \left\{ G_n \cdot \infty \, | \, \alpha_n = 1, \ \alpha^*_n > 1, \ n \ge m +1 \right\} \\
&\quad \cup \left\{ G_n J \cdot \infty \, | \, \alpha_n < 1, \ n \ge m +1 \right\} \cup \left\{ G_n J \cdot \infty \, | \, \alpha_n = 1, \ \alpha^*_n < 1, \ n \ge m +1 \right\} \\
&= \left\{ \frac{t_n}{u_n} \, | \, \alpha_n > 1, \ n \ge m +1 \right\} \cup \left\{ \frac{t_n}{u_n} \, | \, \alpha_n =1, \ \alpha^*_n > 1, \ n \ge m +1 \right\} \\
&\quad \cup \left\{ \frac{v_n}{w_n} \, | \, \alpha_n < 1, \ n \ge m +1 \right\} \cup \left\{ \frac{v_n}{w_n} \, | \, \alpha_n =1, \ \alpha^*_n < 1, \ n \ge m +1 \right\}. 
\end{align*}
Therefore, we complete the proof.
\end{proof}

\begin{proof}[Proof of Theorem~\ref{thm:BA}]
It is a direct consequence of Propositions \ref{prop0}, \ref{prop1} and \ref{prop2}.
\end{proof}

\section{Properties of the 
\texorpdfstring{$\mathbf H_4$}{H4}-best approximations}
\label{sec:LT}

We first give the bound of asymptotic approximation for the $\mathbf H_4$-best approximations.

\begin{theorem}\label{thm:AAR}
Let $p/q$ be an $\mathbf H_4$-best approximation of $\alpha$. 
Then we have
$$
\left| \alpha - \frac{p}{q} \right| < \frac{1}{q^2}.
$$
Moreover, if $p/q$ is a convergent of the Rosen continued fraction and the dual Rosen continued fraction, then
\begin{equation*}\label{eq:both}
\left| \alpha - \frac{p}{q} \right| < \frac{1}{2q^2}.
\end{equation*}
If $p/q$ is not a convergent of the Rosen continued fraction or $p/q$ is not a convergent of the dual Rosen continued fraction, 
then 
\begin{equation}\label{eq:one}
\frac{1}{(\sqrt 2 + 1) q^2} < \left| \alpha - \frac{p}{q} \right| < \frac{1}{q^2}.
\end{equation}
\end{theorem}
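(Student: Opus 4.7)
The plan is to reduce every claim to an elementary inequality in the positive reals $\alpha_n$ and $\alpha^*_n$. Dividing the identity $|u_n\alpha-t_n|=1/(u_n\alpha_n+w_n)$ from \eqref{bound*} by $u_n$, and performing the analogous computation for $\alpha-v_n/w_n$ using $t_nw_n-v_nu_n=1$, yields
\[
q^2\bigl|\alpha-\tfrac{t_n}{u_n}\bigr|=\frac{1}{\alpha_n+\alpha^*_n}, \qquad q^2\bigl|\alpha-\tfrac{v_n}{w_n}\bigr|=\frac{1}{1/\alpha_n+1/\alpha^*_n}.
\]
Combined with Propositions~\ref{prop0}--\ref{prop2}, this gives a dictionary in the $t_n/u_n$ case: $p/q$ is a best approximation iff $\alpha_n>1$ or $\alpha^*_n>1$; a Rosen convergent iff $\alpha^*_n>1$; a dual Rosen convergent iff $\alpha_n>1$ (using that $\alpha\notin\mathbb Q(\mathbf H_4)$ excludes $\alpha_n=1$). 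The $v_n/w_n$ case is the mirror image with every "$>$" replaced by "$<$".

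With this in hand parts (i) and (ii) are essentially free: "at least one of $\alpha_n,\alpha^*_n$ on the appropriate side of $1$" gives the relevant sum $>1$, hence $|\alpha-p/q|<1/q^2$; "both on the same side" gives sum $>2$, hence $|\alpha-p/q|<1/(2q^2)$.

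The substantive content is the lower bound in (iii). In this case exactly one of $\alpha_n,\alpha^*_n$ is on each side of $1$, and a priori the relevant sum can be arbitrarily large. The key observation is that $t_n/u_n$ is invariant under $d_n=3$ or $d_{n+1}=3$ (because $A_3$ fixes $\infty$), and dually $v_n/w_n$ is invariant under $d_n=1$ or $d_{n+1}=1$ (because $A_1$ fixes $0$). Therefore a single best approximation $p/q$ corresponds to a whole interval of admissible indices $n$ on which the sum $\alpha_n+\alpha^*_n$ (resp.\ $1/\alpha_n+1/\alpha^*_n$) is constant, while the individual summands redistribute. At the earliest such index $d_n\ne 3$, forcing $\alpha^*_n=[d_n,\dots,d_1,3^\infty]<\sqrt 2$; at the latest, $d_{n+1}\ne 3$ and $\alpha_n<\sqrt 2$. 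Choosing the index so that the summand exceeding $1$ is the one bounded by $\sqrt 2$---the earliest in the Rosen-only subcase, the latest in the dual-Rosen-only subcase---gives $\alpha_n+\alpha^*_n<1+\sqrt 2$ and hence $q^2|\alpha-p/q|>1/(\sqrt 2+1)$. The $v_n/w_n$ case is handled symmetrically, using $d=1$-invariance and the fact that the corresponding extremal index puts $\alpha^*_n$ or $\alpha_n$ above $1/\sqrt 2$, i.e.\ its reciprocal below $\sqrt 2$.

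The main obstacle I expect is keeping the four "only one" subcases organised; once one has established that the earliest/latest index gives the correct upper bound on the larger summand, all four cases follow uniformly.
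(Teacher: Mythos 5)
Your proposal follows essentially the same route as the paper's proof: the identities $q^2\left|\alpha-\frac{t_n}{u_n}\right|=\frac{1}{\alpha_n+\alpha^*_n}$ and $q^2\left|\alpha-\frac{v_n}{w_n}\right|=\frac{1}{(\alpha_n)^{-1}+(\alpha^*_n)^{-1}}$, the dictionary supplied by Propositions~\ref{prop0}--\ref{prop2}, and, for the lower bound in \eqref{eq:one}, the choice of the extremal admissible index ($d_n\ne 3$, resp.\ $d_{n+1}\ne 3$) so that the summand exceeding $1$ is also bounded by $\sqrt 2$. One small correction: since $1\notin\mathbb Q(\mathbf H_4)$, the hypothesis $\alpha\notin\mathbb Q(\mathbf H_4)$ does \emph{not} exclude $\alpha_n=1$ (e.g.\ $\alpha=1=[2^\infty]$); this borderline case is resolved by the sign of $\alpha^*_n$ as in \eqref{eq:dRCF}, not by irrationality, though it is harmless for the inequalities you need.
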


\begin{proof}
We first consider the case that $\frac{p}{q} = \frac{t_n}{u_n}$.
By Proposition~\ref{prop0}, $\alpha^*_n>1$ or $\alpha_n > 1$.
By \eqref{bound*}, we have 
\begin{equation*}
q |q \alpha - p | = u_n|u_n \alpha - t_n | 
= \frac{u_n}{u_n \alpha_n + w_n} = \frac{1}{\alpha_n + \alpha^*_n} < 1.
\end{equation*}
By Propositions \ref{prop1} and \ref{prop2}, 
if $\frac{p}{q}$ is a convergent of the Rosen continued fraction and the dual Rosen continued fraction, then $\alpha^*_n > 1$ and $\alpha_n > 1$.
\begin{equation*}
q |q \alpha - p | = u_n|u_n \alpha - t_n | = \frac{1}{\alpha_n + \alpha^*_n} < \frac 12.
\end{equation*}
Suppose that $\frac{p}{q}$ is not a convergent of the Rosen continued fraction.
Then choose $n$ as $\frac{p}{q} = \frac{t_n}{u_n} \ne \frac{t_{n+1}}{u_{n+1}}$.
Thus $d_{n+1} \in \{ 1,2\}$ and Proposition \ref{prop1} implies $\alpha^*_n < 1$ and $\alpha_n \in (1, \sqrt 2)$. 
Therefore,
\begin{equation*}
\frac{1}{\sqrt 2 + 1} < u_n|u_n \alpha - t_n | 
= \frac{1}{\alpha_n + \alpha^*_n} < 1.
\end{equation*}
When $\frac{p}{q}$ is not a convergent of the dual Rosen continued fraction, choose $n$ as $\frac{p}{q} = \frac{t_n}{u_n} \ne \frac{t_{n-1}}{u_{n-1}}$.
Thus $d_{n} \in \{ 1,2\}$ and Proposition \ref{prop2} implies $\alpha^*_n \in (1, \sqrt 2)$ and $\alpha_n < 1$. 
Therefore,
\begin{equation*}
\frac{1}{\sqrt 2 + 1} < u_n|u_n \alpha - t_n | 
= \frac{1}{\alpha_n + \alpha^*_n} < 1.
\end{equation*}

Suppose that $\frac{p}{q} = \frac{v_n}{w_n}$.
Then $\alpha^*_n < 1$ or $\alpha_n < 1$.
By $\alpha=G_n\cdot \alpha_n$, we have 
\begin{equation}\label{eq:rs2}
q |q \alpha - p | = w_n | w_n \alpha - v_n | 
= \frac{w_n \alpha_n}{u_n \alpha_n + w_n} = \frac{1}{(\alpha^*_n)^{-1} + (\alpha_n)^{-1}} < 1.
\end{equation}
If $\frac{p}{q}$ is a convergent of the Rosen continued fraction and the dual Rosen continued fraction, then $\alpha^*_n < 1$ and $\alpha_n < 1$.
\begin{equation*}
q |q \alpha - p | = w_n|w_n \alpha - v_n | = \frac{1}{(\alpha^*_n)^{-1} + (\alpha_n)^{-1}} < \frac 12.
\end{equation*}
If $\frac{p}{q}=\frac{v_n}{w_n}$ is not a convergent of Rosen continued fraction or not a convergent of the dual Rosen continued fraction, then, in a similar way to case $\frac{p}{q}=\frac{t_n}{u_n}$, we have
\begin{equation*}
\frac{1}{\sqrt 2 + 1} < u_n|u_n \alpha - t_n | 
= \frac{1}{(\alpha^*_n)^{-1} + (\alpha_n)^{-1}} < 1.
\end{equation*}
\end{proof}

Let $\alpha = [d_1, d_2, \dots]$ be a real number given by
$$
d_n = \begin{cases}
3, & \text{ if } \ 4^i \le n < 2\cdot 4^i, \\ 
2, & \text{ if } \ 2\cdot 4^i \le n < 3\cdot 4^i, \\ 
1, & \text{ if } \ 3\cdot 4^i \le n < 4^{i+1}. 
\end{cases}
$$
Let
$$
n_i = 3\cdot 4^i - 2, \qquad n'_i = 2 \cdot 4^i -1.
$$
Then
$\alpha_{n_i}, \alpha_{n'_i} \in \big( \frac{1}{\sqrt 2}, 1 \big)$ and $\alpha^*_{n_i}, \alpha^*_{n'_i} >1$. 
$$
\lim_{i \to \infty} \alpha^*_{n_i} = 1, \quad 
\lim_{i \to \infty} \alpha_{n_{i}} = \frac{1}{\sqrt 2}, 
\quad
\lim_{i \to \infty} \alpha^*_{n'_i} = \infty, \quad 
\lim_{i \to \infty} \alpha_{n'_{i}} = 1.
$$
Therefore, by \eqref{eq:rs2}, we have
$$
\lim_{i\to\infty} w_{n_i} \left | w_{n_i} \alpha - v_{n_i} \right| = \frac{1}{\sqrt 2 + 1} \quad \text{ and } \quad 
\lim_{i\to\infty} w_{n'_i} \left | w_{n'_i} \alpha - v_{n'_i} \right| = 1.
$$
It implies that the constants in \eqref{eq:one} are optimal.
Note that for the case $p/q$ is the convergent of the Rosen continued fraction, Theorem~\ref{thm:AAR} was shown in \cite{Na95}*{Proposition 12}.

Next we state and prove Legendre's theorem associated with $\mathbf H_4$-best approximations.
\begin{theorem}
If 
\begin{equation}\label{bounm_cond}
\left|\alpha-\frac{p}{q}\right|<\frac{1}{2q^2},
\end{equation}
then $p/q$ is an $\mathbf H_4$-best approximation of $\alpha$.
Moreover, the constant 1/2 is the optimal value.
\end{theorem}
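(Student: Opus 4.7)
The plan is to argue by contradiction. Suppose $p/q \in \mathbb Q(\mathbf H_4)$ satisfies \eqref{bounm_cond} but is not an $\mathbf H_4$-best approximation; then there exists $r/s \in \mathbb Q(\mathbf H_4)$, $r/s \ne p/q$, with $0 < s \le q$ and $|s\alpha - r| \le |q\alpha - p| < 1/(2q)$. The key arithmetic input is that denominators of $\mathbb Q(\mathbf H_4)$ split as $\mathbb N \sqcup \sqrt 2 \mathbb N$, with $p$ an odd integer paired with $q \in \sqrt 2 \mathbb N$ (call this Type I) and $p \in \sqrt 2 \mathbb Z$ paired with $q \in \mathbb N$ odd (Type II); in particular $|ps - rq|$ is a nonzero integer (hence $\ge 1$) when $p/q$ and $r/s$ are of different types, and lies in $\sqrt 2 \mathbb Z \setminus \{0\}$ (hence $\ge \sqrt 2$) when they are of the same type.

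I would first handle the case $s = q$: then $p/q$ and $r/s$ share a type, so $|p - r| \ge \sqrt 2$; but $|p - r| \le |s\alpha - r| + |q\alpha - p| < 1/q \le 1 < \sqrt 2$, a contradiction. For $s < q$, the triangle inequality yields
\[
\frac{|ps - rq|}{qs} = \left|\frac{p}{q} - \frac{r}{s}\right| < \frac{1}{2q^2} + \frac{1}{2qs} = \frac{s + q}{2q^2 s}.
\]
In the different-type case this reads $1/(qs) < (s+q)/(2q^2 s)$, forcing $s > q$; in the same-type case it reads $\sqrt 2/(qs) < (s+q)/(2q^2 s)$, forcing $s > (2\sqrt 2 - 1) q > q$. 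Both contradict $s < q$, completing the implication.

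For the optimality of $1/2$, I would trace the extremal case through the same estimate. Replacing $1/(2q^2)$ by $C/q^2$ with $C > 1/2$ turns the different-type inequality into $s > q(1 - C)/C$, which is now compatible with $s < q$. To realize this explicitly, take convergents $c'_n/c_n$ of the regular continued fraction of $\sqrt 2$ with $c'_n/c_n < \sqrt 2$ (automatically $c'_n$ odd and $\gcd(c'_n, c_n) = 1$), set $q_n = \sqrt 2 c_n$ and $s_n = c'_n$, solve the Bezout relation $a_n c'_n - 2 a'_n c_n = 1$ (which forces $a_n$ odd, $\gcd(a_n, c_n) = \gcd(a'_n, c'_n) = 1$), and put $p_n = a_n$, $r_n = \sqrt 2 a'_n$, so that $p_n/q_n$ (Type I) and $r_n/s_n$ (Type II) lie in $\mathbb Q(\mathbf H_4)$ with $p_n s_n - r_n q_n = 1$. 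Since $s_n/q_n = c'_n/(\sqrt 2 c_n) \to 1^-$, for all sufficiently large $n$ one has $s_n > q_n(1 - C)/C$, and choosing $\alpha_n$ just inside the interval between $r_n/s_n$ and the mediant $(p_n + r_n)/(q_n + s_n)$ makes $|\alpha_n - p_n/q_n|$ as close as desired to $1/(q_n(q_n + s_n)) \sim 1/(2 q_n^2)$ from above. For $n$ large this is below $C/q_n^2$, while $p_n/q_n$ fails to be an $\mathbf H_4$-best approximation because $r_n/s_n$ beats it with $s_n < q_n$.

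The principal obstacle I anticipate is the careful bookkeeping of the two types in both the main argument (keeping track of whether $|ps - rq| \ge 1$ or $\ge \sqrt 2$) and the optimality construction (verifying that the Bezout solutions satisfy the parity and coprimality conditions for $\mathbf H_4$-membership); once these arithmetic points are settled, the remaining estimates are standard triangle-inequality work.
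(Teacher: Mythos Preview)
Your implication argument is essentially the paper's own: the paper also derives $|p/q - r/s| < (s+q)/(2sq^2)$ by the triangle inequality and combines it with the lower bound $|ps - rq| \ge 1$ (valid for all distinct pairs in $\mathbb Q(\mathbf H_4)$, as recorded just before Lemma~\ref{lem1}) to force $s > q$. Your type-by-type case split and the separate treatment of $s = q$ are correct but unnecessary, since the uniform bound $|ps-rq|\ge 1$ already handles every case at once.

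Your optimality construction, however, is genuinely different from the paper's. The paper builds a \emph{single} irrational $\alpha$ via its $\mathbf H_4$-expansion (taking $d_n = 3$ when $n = 3^i$ and $d_n = 2$ otherwise) and then invokes Proposition~\ref{prop0} and the identity \eqref{eq:rs2} to exhibit non-best-approximations $v_{n_i}/w_{n_i}$ of this fixed $\alpha$ with $w_{n_i}|w_{n_i}\alpha - v_{n_i}| \to \tfrac12$. Your route is more elementary and self-contained: you avoid the $\mathbf H_4$-expansion machinery entirely by manufacturing, from convergents of $\sqrt 2$ and a Bezout relation, explicit pairs $p_n/q_n$, $r_n/s_n$ of opposite type with $p_n s_n - r_n q_n = 1$ and $s_n/q_n \to 1^-$, and then placing $\alpha_n$ just to the $r_n/s_n$-side of the mediant. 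The paper's argument yields the slightly stronger conclusion that a single $\alpha$ witnesses the failure for every $C > 1/2$; yours produces a different $\alpha_n$ for each $n$, which still suffices for the optimality claim as stated and has the advantage of not relying on any of the structural results of Sections~\ref{sec:BA}--\ref{sec:RCF}.
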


\begin{proof}
Suppose that there exists $r/s \ne p/q$ in $\mathbb Q (\mathbf H_4)$ satisfying
$$|s\alpha-r|\le |q\alpha-p|<\frac{1}{2q}.$$
Then,
$$\left|\alpha-\frac{r}{s}\right|<\frac{1}{2sq}.$$
Thus,
$$\left|\frac{r}{s}-\frac{p}{q}\right|\le\left|\alpha-\frac{r}{s}\right|+\left|\alpha-\frac{p}{q}\right|<\frac{1}{2sq}+\frac{1}{2q^2}=\frac{s+q}{2sq^2}.$$
On the other hand, 
$$\left|\frac{r}{s}-\frac{p}{q}\right| = \frac{|rq - sp|}{sq} \ge \frac{1}{sq}.$$
Therefore, 
$$\frac{1}{sq}<\frac{s+q}{2sq^2}.$$
Thus, $q<s$.
Thus, $p/q$ is an $\mathbf H_4$-best approximation.

Let $\alpha= [ d_1, d_2, d_3, \dots ]$ be the real number given by
$$
d_{n} = \begin{cases} 3, &\text{ if } \ n = 3^i \ \text{ for some } \ i \ge 0, \\
2, &\text{ otherwise}.
\end{cases}
$$
Then $\alpha_{n} > 1$ and $\alpha^*_{n} > 1$ for all $n \ge 1$.
Therefore, $\frac{v_n}{w_n}$ is not an $\mathbf H_4$-best approximation for $n \ge 1$. 
Moreover, for $n_i = 2 \cdot 3^i$, we deduce that
$$  
\lim_{i \to \infty} \alpha_{n_i} = 1, 
\qquad
\lim_{i \to \infty} \alpha^*_{n_i} = 1.
$$
We have a subsequential limit of \eqref{eq:rs2} along $\{n_i\}$ as
\begin{equation*}
\lim_{i \to \infty} w_{n_i}|w_{n_i} \alpha - v_{n_i} | 
= \lim_{i \to \infty} \frac{1}{(\alpha^*_{n_i})^{-1} + (\alpha_{n_i})^{-1}} = \frac 12.
\end{equation*}
Therefore, the constant 1/2 in \eqref{bounm_cond} cannot be replaced by any larger number.
\end{proof}

\begin{remark}
Combined with \eqref{eq:one}, we deduce that if
$$
\left|\alpha-\frac{p}{q}\right|<\frac{1}{(\sqrt 2 + 1) q^2},
$$
then $p/q$ is a convergent of the Rosen continued fraction.
This is stated in \cite{Na95}*{Proposition 13}.
\end{remark}

\section*{Acknowledgement}

The authors wish to thank Yann Bugeaud for his helpful comments.   

A.B. acknowledges the support by Centro di Ricerca Matematica Ennio de Giorgi, Scuola Normale Superiore di Pisa and UniCredit Bank R\&D division for financial support  under the project `Dynamics and Information Research Institute-Quantum Information (Teoria dell’Informazione), Quantum Technologies'. 
A.B. would like to thank Centro de Giorgi for the excellent working conditions and the research travel support.
D.K.~was supported by the National Research Foundation of Korea (NRF-2018R1A2B6001624, RS-2023-00245719).
S.L. was supported by the Institute for Basic Science (IBS-R003-D1) and BK21 SNU Mathematical Sciences Division.

\begin{bibdiv}
\begin{biblist}

\bib{BH13}{article}{
   author={Bugeaud, Yann},
   author={Hubert, Pascal},
   author={Schmidt, Thomas},
   title={Transcendence with Rosen continued fractions},
   journal={J. Eur. Math. Soc. (JEMS)},
   volume={15},
   date={2013},
   number={1},
   pages={39--51},
   issn={1435-9855},
   review={\MR{2998829}},
   doi={10.4171/JEMS/355},
}

\bib{BKS00}{article}{
  title={Natural extensions for the Rosen fractions},
  author={Burton, Robert},
  author={Kraaikamp, Cornelis},
  author={Schmidt, Thomas},
  journal={Transactions of the American Mathematical Society},
  volume={352},
  number={3},
  pages={1277--1298},
  year={2000}
}

\bib{CK20}{article}{      
   author={Cha, Byungchul},
   author={Kim, Dong Han},
   title={Number theoretical properties of Romik's dynamical system},
   journal={Bull. Korean Math. Soc.},
   volume={57},
   date={2020},
   number={1},
   pages={251--274},
   issn={1015-8634},
   review={\MR{4060196}},
   doi={10.4134/BKMS.b190163},
}

\bib{CK23}{article}{
   author={Cha, Byungchul},
   author={Kim, Dong Han},
   title={Intrinsic Diophantine approximation on the unit circle and its
   Lagrange spectrum},
   language={English, with English and French summaries},
   journal={Ann. Inst. Fourier (Grenoble)},
   volume={73},
   date={2023},
   number={1},
   pages={101--161},
   issn={0373-0956},
   review={\MR{4588926}},
   doi={10.5802/aif.3522},
}

\bib{CK24}{article}{
   author={Cha, Byungchul},
   author={Kim, Dong Han},
   title={Intrinsic Diophantine approximation on circles and spheres},
   journal={Mathematika},
   volume={70},
   date={2024},
   number={1},
   pages={Paper No. e12228, 31},
   issn={0025-5793},
   review={\MR{4687747}},
}

\bib{DKS09}{article}{
   author={Dajani, Karma},
   author={Kraaikamp, Cor},
   author={Steiner, Wolfgang},
   title={Metrical theory for $\alpha$-Rosen fractions},
   journal={J. Eur. Math. Soc. (JEMS)},
   volume={11},
   date={2009},
   number={6},
   pages={1259--1283},
   issn={1435-9855},
   review={\MR{2557135}},
   doi={10.4171/JEMS/181},
}

\bib{DS70}{article}{
   author={Davenport, H.},
   author={Schmidt, Wolfgang M.},
   title={Dirichlet's theorem on diophantine approximation},
   conference={
      title={Symposia Mathematica, Vol. IV},
      address={INDAM, Rome},
      date={1968/69},
   },
   book={
      publisher={Academic Press, London},
   },
   date={1970},
   pages={113--132},
   review={\MR{0272722}},
}



\bib{HS86}{article}{
   author={Haas, Andrew},
   author={Series, Caroline},
   title={The Hurwitz constant and Diophantine approximation on Hecke
   groups},
   journal={J. London Math. Soc. (2)},
   volume={34},
   date={1986},
   number={2},
   pages={219--234},
   issn={0024-6107},
   review={\MR{856507}},
   doi={10.1112/jlms/s2-34.2.219},
}

\bib{HMTY}{article}{
   author={Hanson, Elise},
   author={Merberg, Adam},
   author={Towse, Christopher},
   author={Yudovina, Elena},
   title={Generalized continued fractions and orbits under the action of
   Hecke triangle groups},
   journal={Acta Arith.},
   volume={134},
   date={2008},
   number={4},
   pages={337--348},
   issn={0065-1036},
   review={\MR{2449157}},
   doi={10.4064/aa134-4-4},
}

\bib{KLL22}{article}{
   author={Kim, Dong Han},
   author={Lee, Seul Bee},
   author={Liao, Lingmin},
   title={Odd-odd continued fraction algorithm},
   journal={Monatsh. Math.},
   volume={198},
   date={2022},
   number={2},
   pages={323--344},
   issn={0026-9255},
   review={\MR{4421912}},
   doi={10.1007/s00605-022-01704-2},
}

\bib{KS}{article}{
author={Kim, Dong Han},
author={Sim, Deokwon},
title={The Markoff and Lagrange spectra on the Hecke group H4},
date={2022},
eprint={arXiv:2206.05441 [math.NT]},
}


\bib{Kop85}{article}{
   author={Kopetzky, Hans G\"unther},
   title={\"Uber das Approximationsspektrum des Einheitskreises},
   language={German, with English summary},
   journal={Monatsh. Math.},
   volume={100},
   date={1985},
   number={3},
   pages={211--213},
   issn={0026-9255},
   review={\MR{0812612}},
   doi={10.1007/BF01299268},
}

\bib{Le85}{article}{
   author={Lehner, J.},
   title={Diophantine approximation on Hecke groups},
   journal={Glasgow Math. J.},
   volume={27},
   date={1985},
   pages={117--127},
   issn={0017-0895},
   review={\MR{0819833}},
   doi={10.1017/S0017089500006121},
}

\bib{Na95}{article}{
   author={Nakada, Hitoshi},
   title={Continued fractions, geodesic flows and Ford circles},
   conference={
      title={Algorithms, fractals, and dynamics},
      address={Okayama/Kyoto},
      date={1992},
   },
   book={
      publisher={Plenum, New York},
   },
   isbn={0-306-45127-1},
   date={1995},
   pages={179--191},
   review={\MR{1402490}},
}

\bib{Na10}{article}{
   author={Nakada, Hitoshi},
   title={On the Lenstra constant associated to the Rosen continued
   fractions},
   journal={J. Eur. Math. Soc. (JEMS)},
   volume={12},
   date={2010},
   number={1},
   pages={55--70},
   issn={1435-9855},
   review={\MR{2578603}},
   doi={10.4171/JEMS/189},
}

\bib{Pa77}{article}{
   author={Parson, L. Alayne},
   title={Normal congruence subgroups of the Hecke groups $G(2\sp{(1/2)})$
   and $G(3\sp{(1/2)})$},
   journal={Pacific J. Math.},
   volume={70},
   date={1977},
   number={2},
   pages={481--487},
   issn={0030-8730},
   review={\MR{0491507}},
}

\bib{Ro54}{article}{
   author={Rosen, David},
   title={A class of continued fractions associated with certain properly
   discontinuous groups},
   journal={Duke Math. J.},
   volume={21},
   date={1954},
   pages={549--563},
   issn={0012-7094},
   review={\MR{0065632}},
}

\bib{RS92}{article}{
   author={Rosen, David},
   author={Schmidt, Thomas A.},
   title={Hecke groups and continued fractions},
   journal={Bull. Austral. Math. Soc.},
   volume={46},
   date={1992},
   number={3},
   pages={459--474},
   issn={0004-9727},
   review={\MR{1190349}},
   doi={10.1017/S0004972700012120},
}

\bib{Sch76}{article}{
   author={Schmidt, Asmus L.},
   title={Minimum of quadratic forms with respect to Fuchsian groups. I},
   journal={J. Reine Angew. Math.},
   volume={286/287},
   date={1976},
   pages={341--368},
   issn={0075-4102},
   review={\MR{0457358}},
   doi={10.1515/crll.1976.286-287.341},
}

\bib{Sch82}{article}{
author={F. Schweiger},
title={Continued fractions with odd and even partial quotients},
journal={Arbeitsber. Math. Inst. Univ. Salzburg},
volume={4}, 
date={1982}, 
pages={59--70},
}

\bib{Sch84}{article}{
author={F. Schweiger},
title={On the approximation by continued fractions with odd and even partial quotients},
journal={Arbeitsber. Math. Inst. Univ. Salzburg},
volume={1},
number={2},
date={1984}, 
pages={105--114},
}

\bib{Ser88}{article}{
   author={Series, Caroline},
   title={The Markoff spectrum in the Hecke group $G_5$},
   journal={Proc. London Math. Soc. (3)},
   volume={57},
   date={1988},
   number={1},
   pages={151--181},
   issn={0024-6115},
   review={\MR{0940433}},
   doi={10.1112/plms/s3-57.1.151},
}

\bib{Sho11}{article}{
   author={Short, Ian},
   title={Ford circles, continued fractions, and rational approximation},
   journal={Amer. Math. Monthly},
   volume={118},
   date={2011},
   number={2},
   pages={130--135},
   issn={0002-9890},
   review={\MR{2795580}},
   doi={10.4169/amer.math.monthly.118.02.130},
}

\bib{SW16}{article}{
    author={Short, Ian},
    author={Walker, Mairi},
    title={Even-integer continued fractions and the Farey tree},
    conference={
       title={Symmetries in graphs, maps, and polytopes},
    },
    book={
       series={Springer Proc. Math. Stat.},
       volume={159},
       publisher={Springer, [Cham]},
    },
    date={2016},
    pages={287--300},
    review={\MR{3516227}},
    doi={10.1007/978-3-319-30451-9\_15},
}

\bib{SW16I}{article}{
   author={Short, Ian},
   author={Walker, Mairi},
   title={Geodesic Rosen continued fractions},
   journal={Q. J. Math.},
   volume={67},
   date={2016},
   number={4},
   pages={519--549},
   issn={0033-5606},
   review={\MR{3609844}},
   doi={10.1093/qmath/haw025},
}

\bib{Vul97}{article}{
   author={Vulakh, L. Ya.},
   title={The Markov spectra for triangle groups},
   journal={J. Number Theory},
   volume={67},
   date={1997},
   number={1},
   pages={11--28},
   issn={0022-314X},
   review={\MR{1485425}},
   doi={10.1006/jnth.1997.2181},
}
\end{biblist}
\end{bibdiv}

\Addresses

\end{document}